\numberwithin{equation}{section}
\newtheorem{defi}{Definition}[section]
\newtheorem{rem}[defi]{Remark}
\theoremstyle{plain}
\newtheorem{tw}[defi]{Theorem}
\newtheorem{lem}[defi]{Lemma}
\newtheorem{prop}[defi]{Proposition}
\newcommand\bb[1]{\mathbf{#1}}
\renewcommand{\r}{\mathbb{R}}
\newcommand{\n}{\mathbb{N}}
\newcommand{\rn}{\mathbb{R}^d}
\newcommand{\tn}{\Omega}
\newcommand{\io}{\int_{\Omega}}
\newcommand{\htwo}{H^2(\Omega)}
\renewcommand{\hom}{H^1(\tn)}
\newcommand{\ld}{L^2(\Omega)}
\newcommand{\loc}{\operatorname{loc}}
\newcommand{\ul}{\frac{1}{2}}
\newcommand{\sul}{\sfrac{1}{2}}
\newcommand{\ddt}{\frac{d}{dt}}
\newcommand{\hp}{H^{\perp}}
\newcommand{\ant}{\quad\textrm{ and }\quad}
\renewcommand{\div}{\operatorname{div}}
\DeclareMathOperator{\curl}{curl}
\newcommand{\F}{\mathcal{F}}
\newcommand{\T}{\mathcal{T}}
\let \epsilon\varepsilon
\let \phi\varphi
\newcommand{\initcond}{\|\bb{v}_0\|^2_{\H}+(2\mu+\lambda)\|\nabla\div\bb u_0\|^2_{\ld}+\mu\|\curl\curl\bb u_0\|_{\ld}^2+\left\|\frac{\nabla\theta_0}{{\theta^{\sul}_0}}\right\|^2_{\ld}\leq D}
\newcommand{\M}{\mathcal{M}}
\newcommand{\ot}{(0,\infty)\times\Omega}
\newcommand{\wspnd}{2\mu+\lambda}
\newcommand{\bu}{\bb{u}}
\newcommand{\bs}[1]{\boldsymbol{#1}}
\newcommand{\N}{\mathcal{N}}
\renewcommand{\H}{\mathcal{H}}
\newcommand{\bg}{\bs{\gamma}}
\begin{document}
	\date{\today}
	\title{\bf Existence, uniqueness, and time-asymptotics of regular solutions in multidimensional thermoelasticity on domains with boundary}
	\author{Piotr Micha{\l} Bies\\
\it\small{Faculty of Mathematics and Information Sciences,}\\
\it\small{Warsaw University of Technology,}\\
\it\small{Ul. Koszykowa 75, 00-662 Warsaw, Poland.}\\
{\tt biesp@mini.pw.edu.pl}\\
{\small ORCID: 0000-0001-5385-1210}}
\maketitle
	
\begin{abstract}
\noindent In the paper, we investigate the nonlinear thermoelasticity model in two- and three-dimensional convex and bounded domains. We propose new boundary conditions for the displacement. These conditions are not usual in thermoelasticity. Whereas, we posit the Neumann boundary condition for the temperature. We prove the existence of global, unique solutions for small initial data. The temperature positivity is also shown. Next, we investigate the long-time behavior of solutions. We show that the divergence-free part of the displacement oscillates. On the other hand, we prove that the potential part and the temperature are strongly coupled. The non-rotation part is heavily affected by heat propagation. It turns out that it tends to $0$ as $x$ approaches infinity. Additionally, the temperature converges to a constant function.

Our techniques are firmly based on the functional $\F$ adopted from  {\sc Bies, P. M., Cie\'slak, T., Fuest, M., Lankeit, J., Muha, B., and Trifunovi\'c, S.}, \emph{Existence, uniqueness, and long-time asymptotic behavior of regular solutions in multidimensional thermoelasticity}, arXiv: 2507.20794, 2025. The functional is based on the Fisher information and higher-order derivatives of the displacement. It responds well to the new boundary conditions. It allows us to close a priori estimates. We also need $L^{\infty}$-estimates for the temperature here. The Moser iterative procedure ensures it.

The Helmholtz decomposition is applied to the displacement. The boundary conditions are crucial here. The boundary integrals that appear in the calculations at this point disappear thanks to these conditions. This allows us to split the problem into two separate ones. Each of them is associated with one part of the Helmholtz decomposition.
\end{abstract}

\medskip

\noindent
\textbf{Keywords:} {thermoelasticity, strong solutions, time-asymptotics, the second law of thermodynamics, the Fisher information}
\medskip

\noindent
\textbf{AMS Mathematical Subject classification (2020):} {74A15, 74H40, 35B40}
\medskip

\section{Introduction}

Let us take $\Omega\subset\rn$. We assume that $\Omega$ is open, bounded, and convex with a smooth boundary $\partial\Omega$. We want to investigate the following problem
\begin{equation}\label{system}
\begin{cases}
\bb{u}_{tt}- (2\mu+\lambda)\nabla\div\bb u+\mu\curl\curl\bb u =-\nu\nabla\theta & \quad \text{ in } \ot,\\
\theta_t - \Delta \theta =- \nu\theta \div\bb{u}_t& \quad \text{ in }  \ot,\\
\bb u\cdot\bb n=0,\ \N\bb u=0,\ \nabla\theta\cdot\bb n=0 &\quad\text{ on } (0,\infty)\times\partial\Omega,\\
\bb{u}(0,\cdot)=\bb{u}_0,\ \bb{u}_t(0,\cdot) = \bb{v}_0,\ \theta(0,\cdot)=\theta_0>0,&
\end{cases}
\end{equation}
where the operator $\N$ is defined as follows
\begin{align*}
\N\bb u=
\begin{cases}
\curl\bb u& \textrm{for }d=2,\\
\curl\bb u\times\bb n&\textrm{for }d=3.
\end{cases}
\end{align*}
Functions $\bb u\colon [0,\infty)\times\Omega\to\rn$ and $\theta\colon [0,\infty)\times\Omega\to\rn$ are unknown. We interpret $\bu$ as a displacement of the body and $\theta$ as its temperature. The existence of a global-in-time, strong, and unique solution for small initial data, as well as the long-time asymptotic behavior of solutions, are the main results of the paper.

Let us assume that $d=2$ for a while. For given vector field $\bb w=(w^1,w^2)\colon\Omega\to\r^2$, we define $\curl\bb w=w_{x_1}^2-w^1_{x_2}$. On the other hand, for a scalar function $g\colon\Omega\to\r$, we write $\curl g=\nabla^{\perp} g=(g_{x_2},-g_{x_1})$. Then, a vector field $\bb w\colon\Omega\to\r^2$ satisfies an analogous formula as vector fields for $d=3$, i.e.
\begin{align*}
\curl\curl\bb w=\nabla\div\bb w-\Delta\bb w.
\end{align*}

The constant $\nu\in\r$ is given. Constants $\lambda$ and $\mu$ are also fixed and are so-called the Lam\'e moduli (see \cite{Rackebook, Slaughter}). They satisfy
\begin{align*}
\mu>0\ant 2\mu+d\lambda>0.
\end{align*}
Other constants are assumed to be equal $1$ for simplicity. The operator
\begin{align}\label{oplame}
(\wspnd)\nabla\div\bu-\mu\curl\curl\bu
\end{align}
is called the Lam\'e operator. It can also be written as
\begin{align*}
\div\left(2\mu\bs\epsilon(\bu)+\lambda\div\bu I\right),
\end{align*}
where $\bs\epsilon(\bu):=\ul\left(\nabla\bu+\nabla^T\bu\right)$ is the linearized deformation tensor (see \cite{Temamcont, Slaughter}). In this form, we see a symmetry of the operator. However, for our purposes, the form in \eqref{oplame} is more suitable. We can treat the Lam\'e operator as a generalization of the Laplacian. It is more appropriate to consider the former than the latter from an engineering point of view.

We posit the Neumann boundary condition for the temperature. It is a natural setting in this context (see \cite{CMT, Rackebook}). It means that the heat cannot escape from the body. The standard boundary condition for the displacement is of the Dirichlet type (see \cite{Rackebook, Winkler1, Winkler2, Hrusa2, TC2, CMT, Racke1, Racke2} and others). On the other hand, the Neumann boundary condition for $\bu$ is also studied (see \cite{Rackebook, Racke1, Slem}) or even the Neumann condition for the stress tensor $\bs\sigma:=2\mu\bs\epsilon(\bu)+\lambda\div\bu I-\nu\theta I$. We and T. Cie\'slak considered the Dirichlet condition for $\bu$ in one-dimensional domains in \cite{BC, BC2}. We prove the existence and uniqueness of global strong solutions and investigate the time-asymptotic behavior. Our methods are strongly based on estimates, including the Fisher information. The application of them to the system with the Dirichlet condition for the displacement in higher dimensions presents certain technical challenges related to boundary integrals. Hence, first, we, in cooperation with T. Cie\'slak, M. Fuest, J. Lankiet, B. Muha, and S. Trifunovi\'c, studied the problem on the multidimensional torus in \cite{Biest}. We omitted issues interlocked with the boundary conditions in this manner.

J. B\v{r}ezina and E. Feireisl study the compressible magnetohydrodynamics system in 3d in \cite{fei}. They propose a few physically grounded boundary conditions for the magnetic field vector. One of these is the conditions we have set for displacement, i.e.
\begin{align}\label{ubound}
\bu\cdot\bb n=0\ant\N\bu=0 \quad\textrm{ on }\partial\Omega.
\end{align}
Furthermore, the authors state that the Dirichlet boundary condition for the magnetic field renders the compressible magnetohydrodynamic system overdetermined. Perhaps, the same issue we have in thermoelasticity.
 
It is novel to posit \eqref{ubound} for the displacement in the context of thermoelasticity. Nevertheless, they fit well with our methods. They enable us to prove the same results as in \cite{Biest}. Indeed, we are able to close calculations with a functional
\begin{align*}
\F&(\bu,\bu_t,\theta):=\\
&\ul\left(\io (\div \bu_t)^2dx+\io |\curl \bu_t|^2dx+(\wspnd)\io|\nabla\div \bu|^2dx+\mu\io|\curl\curl\bu|^2dx+\io\frac{|\nabla\theta|^2}{\theta}dx\right)
\end{align*}  
involving the Fisher information and the higher-order derivatives of $\bu$  thanks to \eqref{ubound}. In addition, the displacement equipped with \eqref{ubound} responds well to the Helmholtz decomposition. It means that $\bu$ can be decomposed into the curl-free part and the divergence-free part, which satisfy the appropriate equations. Let us point out that the functional $\F$ is a key tool in our analysis.

System \eqref{system} emerges as the simplest nonlinear system in the context of thermoelasticity. It is coherent with the following Helmholtz free energy function
\begin{align*}
H(\nabla\bu,\theta)=\frac\lambda2(\div\bu)^2+\mu|\bs\epsilon(\bu)|^2+\theta-\theta\log\theta-\nu\theta\div\bu.
\end{align*}
We refer the reader to \cite{Rackebook, CMT, Slem} for a rigorous derivation of the model. Let us perceive that the system exhibits the conservation of energy. Indeed, we have the following proposition.
\begin{prop}\label{energy_cons}
Regular solutions $\theta, \bb u$ of \eqref{system} satisfy
\begin{align}\label{balance}
\begin{split}
&\ul\io \bb |u_t|^2dx+\frac\wspnd2\io (\div\bb u)^2dx+\frac\mu2\io|\curl \bb u|^2dx+\io\theta dx\\
=&\ul\io \bb |v_0|^2dx+\frac\wspnd2\io (\div\bb u_0)^2dx+\frac\mu2\io|\curl \bb u_0|^2dx+\io\theta_0 dx.
\end{split}
\end{align}
\end{prop}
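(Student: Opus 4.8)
The plan is to test the momentum equation against $\bu_t$, integrate the heat equation over $\Omega$, and add the two resulting identities so that the thermomechanical coupling terms cancel. First I would multiply the first equation in \eqref{system} by $\bu_t$ and integrate over $\Omega$. The inertial term gives $\io\bu_{tt}\cdot\bu_t\,dx=\ddt\ul\io|\bu_t|^2dx$. For the Lam\'e part I integrate by parts: the divergence term yields $\io(\nabla\div\bu)\cdot\bu_t\,dx=-\io(\div\bu)(\div\bu_t)\,dx+\int_{\partial\Omega}(\div\bu)(\bu_t\cdot\bb n)\,dS$, and the curl-curl term yields, via the Green formula $\io(\curl\curl\bu)\cdot\bu_t\,dx=\io\curl\bu\cdot\curl\bu_t\,dx+\int_{\partial\Omega}(\curl\bu\times\bb n)\cdot\bu_t\,dS$ in three dimensions (with the scalar analogue in two dimensions).

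The key point is that every boundary integral vanishes. Since $\bu\cdot\bb n=0$ holds on $\partial\Omega$ for all time and $\bb n$ is fixed, differentiating in $t$ gives $\bu_t\cdot\bb n=0$, which kills the divergence boundary term. The curl-curl boundary contribution involves the trace $\curl\bu\times\bb n$ (respectively $\curl\bu$ itself when $d=2$), and this vanishes precisely because of the nonstandard condition $\N\bu=0$. After these integrations by parts the tested left-hand side becomes $\ddt$ of $\ul\io|\bu_t|^2dx+\frac{\wspnd}{2}\io(\div\bu)^2dx+\frac\mu2\io|\curl\bu|^2dx$. On the right-hand side the coupling term is $-\nu\io(\nabla\theta)\cdot\bu_t\,dx=\nu\io\theta\div\bu_t\,dx-\nu\int_{\partial\Omega}\theta(\bu_t\cdot\bb n)\,dS$, whose boundary term again drops out by $\bu_t\cdot\bb n=0$.

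Next I would integrate the heat equation over $\Omega$. Here $\io\theta_t\,dx=\ddt\io\theta\,dx$, while $\io\Delta\theta\,dx=\int_{\partial\Omega}\nabla\theta\cdot\bb n\,dS=0$ by the Neumann condition, so $\ddt\io\theta\,dx=-\nu\io\theta\div\bu_t\,dx$. Adding this to the tested momentum identity, the two coupling terms $\pm\nu\io\theta\div\bu_t\,dx$ cancel exactly, leaving $\ddt\bigl[\ul\io|\bu_t|^2dx+\frac{\wspnd}{2}\io(\div\bu)^2dx+\frac\mu2\io|\curl\bu|^2dx+\io\theta\,dx\bigr]=0$. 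Integrating from $0$ to $t$ and inserting the initial data yields \eqref{balance}.

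The only delicate step is the curl-curl integration by parts and the verification that its boundary contribution vanishes; this is exactly where the condition $\N\bu=0$ is needed, and it must be checked separately in the scalar ($d=2$) and vector ($d=3$) formulations of $\curl$. Everything else is a standard energy computation, legitimate for regular solutions.
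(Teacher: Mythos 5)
Your proposal is correct and follows essentially the same route as the paper: test the momentum equation against $\bu_t$, integrate by parts using $\bu_t\cdot\bb n=0$ and $\N\bu=0$ to kill the boundary terms, integrate the heat equation over $\Omega$ using the Neumann condition, and add so that the coupling terms $\pm\nu\io\theta\div\bu_t\,dx$ cancel. You simply spell out the boundary-integral bookkeeping in more detail than the paper does.
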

\begin{proof}
Let us multiply the first equation in \eqref{system} by $\bb u_t$ and integrate over $\Omega$. Integrating by parts, thanks to the boundary conditions for $\bu$, gives us
\begin{align}\label{lameq1}
\ddt\left(\ul\io \bb{u}_t^2dx+\frac{2\mu+\lambda}2\io (\div \bb{u})^2dx+\frac\mu2\io|\curl \bb u|^2dx\right)&= -\nu\io\nabla\theta\cdot\bb u_tdx\nonumber\\
&=\nu\io\theta\div\bb u_tdx.
\end{align}
Next, we integrate the lower equation in \eqref{system} over $\Omega$. Due to the boundary conditions for $\theta$, we obtain
\begin{align*}
\ddt\io\theta dx=-\nu\io\theta\div\bb u_tdx.
\end{align*}
We add the above equality to \eqref{lameq1}. It asserts the claim.
\end{proof}

In the definition below, we explain what solutions to equation \eqref{system} we will be looking for. First, we introduce the following function space
\begin{align*}
\H:=\left\{\bb w\in\hom\colon \bb w\cdot\bb n=0\textrm{ on }\partial\Omega\right\}
\end{align*}
with the norm
\begin{align}\label{defnorm}
\|\bb w\|_{\H}:=\left(\|\div\bb w\|_{\ld}^2+\|\curl\bb w\|_{\ld}^2\right)^{\sul}.
\end{align}
We refer to Subsection \ref{sublap} for further information about $\H$ and its norm.
\begin{defi}\label{soldef}
We say that $(\bb u,\theta)$ is a solution to (\ref{system}) if:
\begin{itemize}
\item The initial data is of regularity
\begin{align*}
\bb u_0\in \htwo\cap\H,\quad \bb v_0\in\H,\quad \theta_0\in\hom\cap L^{\infty}(\tn).
\end{align*}
We also require that there exists $\tilde{\theta}>0$ such that $\theta_0(x)\geq\tilde{\theta}$ for all $x\in\tn$.
\item Solutions $\theta$ and $\bu$ satisfy
\begin{align}\label{reggl}
\begin{split}
\bu&\in L^{\infty}(0,\infty;\htwo\cap\H)\cap W^{1,\infty}(0,\infty;\H)\cap W^{2,\infty}(0,\infty;\ld),\\
\theta&\in L^{\infty}(0,\infty;\hom)\cap L^2_{\loc}(0,\infty;\htwo)\cap H^1_{\loc}(0,\infty; \ld).
\end{split}
\end{align}
\item The momentum equation
\begin{align}\label{defmom}
 \io\bb u_{tt}\cdot\bs\phi dx+(\wspnd)\io\div\bb u\div\bs\phi dx+\mu\io\curl\bb u\cdot\curl\bs\phi dx =-\nu\io\nabla\theta\cdot\bs\phi dx
\end{align}
is satisfied for almost all $t\in (0,\infty)$ and $\bs\phi\in\H$.
\item The entropy equation
\begin{align}\label{defheat}
\io\theta_t\psi dx+\io\nabla\theta\cdot\nabla\psi=-\nu\io\theta \div\bb u_{t}\psi dx
\end{align}
holds for almost all $t\in (0,\infty)$ and for all $\psi\in\hom$.
\item Initial conditions are attained in the following sense:
\[
\theta \in C([0,\infty);\hom),
\]
\[
u\in C([0,\infty);\H),\ u_t \in C([0,\infty);\ld).
\]
\end{itemize}
\end{defi}

\begin{rem}
Let $\bu$ and $\theta$ be as in Definition \ref{soldef}. Then, it can be easily shown that they satisfy the momentum equation and the heat equation in \eqref{system} pointwise almost everywhere on $\ot$. Moreover, the boundary conditions are satisfied in the sense of the trace.
\end{rem}

We establish the existence and uniqueness of solutions for small initial values of \eqref{system} in the first main result of the article. We formulate it in the following theorem.
\begin{tw}\label{existglm}
There exists a constant $D>0$, dependent on $d$, $|\nu|$, and $\tn$, such that for any initial values satisfying
\begin{align*}
\initcond
\end{align*} there exists an unique solution to problem \eqref{system} as described in Definition \ref{soldef}. Moreover, we have that there exist constants $C,c>0$ such that
\begin{align}\label{tezexglm}
c\leq\theta\leq C\quad\textrm{for almost all }(t,x)\in [0,\infty)\times\Omega.
\end{align}
\end{tw}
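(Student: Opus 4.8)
The plan is to establish Theorem~\ref{existglm} by a standard approximation-and-a-priori-estimate scheme, with the functional $\F$ serving as the engine that closes the estimates. First I would construct local-in-time approximate solutions, either via a Galerkin method on the eigenfunctions of the Lam\'e operator (adapted to the boundary conditions \eqref{ubound}) and the Neumann Laplacian for $\theta$, or via a fixed-point/iteration argument: freeze $\theta$ in the momentum equation to obtain a linear hyperbolic problem for $\bu$, then feed $\div\bu_t$ into the parabolic equation for $\theta$, and iterate. Local existence on some interval $[0,T^\ast)$ follows from the regularity of the initial data $\bu_0\in\htwo\cap\H$, $\bb v_0\in\H$, $\theta_0\in\hom\cap L^\infty$. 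The essential point, which I expect to be the main obstacle, is to show that the solution does not blow up and that the a~priori bounds are \emph{global} and \emph{uniform in time} under the smallness hypothesis on the combination $\initcond$.

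For the global a~priori estimate I would differentiate the system once in time and test the momentum equation with $\bu_{tt}$ and the entropy equation with $-\theta_t/\theta$ (or the appropriate combination producing the Fisher information $\io |\nabla\theta|^2/\theta\,dx$). The key computation is to differentiate $\F(\bu,\bu_t,\theta)$ along solutions and show an inequality of the form $\ddt\F + (\text{good dissipation}) \le C\,\F^{3/2}$ or an analogous nonlinear feedback controlled by $\F$ itself; here the crucial structural input is that the boundary terms arising from integration by parts vanish because of \eqref{ubound}, exactly as the abstract argument of \cite{Biest} exploits on the torus. Combined with the energy balance of Proposition~\ref{energy_cons}, a continuity/bootstrap argument then shows that if $\F(0)$ is small enough (which the smallness of the initial data guarantees), then $\F$ stays small for all $t$, giving the uniform bounds in \eqref{reggl}. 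One must be careful that the constants in the Sobolev and elliptic-regularity inequalities depend only on $d$, $|\nu|$, and $\Omega$, so that the threshold $D$ has the claimed dependence.

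For the two-sided temperature bound \eqref{tezexglm} the lower bound $\theta\ge c>0$ follows from a minimum principle: the parabolic equation $\theta_t-\Delta\theta=-\nu\theta\div\bu_t$ has a zeroth-order coefficient $-\nu\div\bu_t$ that, once $\|\div\bu_t\|_{L^\infty}$ is controlled through the $\F$-bounds and Sobolev embedding, yields $\theta\ge \tilde\theta\,e^{-\int_0^t\|\nu\div\bu_t\|_\infty}$ and, after the global-in-time integrability of the dissipation is established, a genuinely positive floor. The upper bound $\theta\le C$ is where I would invoke the Moser iteration advertised in the abstract: treating $-\nu\theta\div\bu_t$ as a reaction term with $L^\infty_t L^p_x$-controlled coefficient (again from $\F$), the iterative procedure upgrades the $L^2$ and energy bounds to an $L^\infty$ bound uniform in time. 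I expect the positivity-and-boundedness step and the time-uniformity of the Moser estimate to require the most care, because they rely on having first closed the $\F$-estimate globally.

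Finally, uniqueness I would obtain by an energy estimate on the difference of two solutions: subtract the two systems, test the momentum difference with the velocity difference and the temperature difference with itself (or with a weighted version), use the already-established $L^\infty$ bounds $c\le\theta\le C$ to control the quadratic coupling terms, and close a Gr\"onwall inequality whose only data is the difference of initial values, which is zero. The regularity in \eqref{reggl} is enough to justify all these test-function choices, and the limit passage from the approximate scheme to an actual solution in the sense of Definition~\ref{soldef} uses weak-$\ast$ compactness together with the uniform bounds, with strong convergence of $\div\bu_t$ (needed to pass to the limit in the nonlinear term $\theta\div\bu_t$) obtained from an Aubin--Lions argument on compact time intervals.
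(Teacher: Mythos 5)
Your overall architecture (a Galerkin-type approximation adapted to the boundary conditions \eqref{ubound}, closing the a priori estimates through the functional $\F$, Moser iteration for an $L^\infty$ bound on $\theta$, weak-$\ast$ compactness plus Aubin--Lions for the limit passage, and a Gr\"onwall estimate on differences for uniqueness) coincides with the paper's, and your description of the $\F$-estimate --- dissipation from the Fisher information, boundary terms killed by $\bu\cdot\bb n=0$ and $\N\bu=0$, a smallness threshold $D$ depending only on $d$, $|\nu|$, $\Omega$ --- captures what is actually proved in Theorem \ref{estsol}. (There the inequality takes the form $\ddt\F\le C\|\nabla^2\theta^{\sul}\|_{\ld}^2(\F-D)$, so monotonicity of $\F$ below the threshold is immediate and no separate continuation argument is needed; the convexity of $\Omega$ enters through Lemma \ref{Evansentr}. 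The paper also uses a half-Galerkin scheme, discretizing only $\bu$ while solving the heat equation exactly, which is what makes the temperature estimates directly applicable at the approximate level.)

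The genuine gap is your argument for the lower bound $\theta\ge c>0$. The proposed minimum principle would give at best $\theta\ge\tilde\theta\exp\left(-\int_0^t\|\nu\div\bu_t\|_{L^\infty(\Omega)}ds\right)$, and this fails on two counts. First, the $\F$-estimates control $\div\bu_t$ only in $L^\infty(0,\infty;\ld)$ (it is $\nabla\div\bu$, not $\nabla\div\bu_t$, that is bounded in $L^2$), so no $L^\infty_x$ bound on $\div\bu_t$ is available in dimension $2$ or $3$. Second, and more fundamentally, $\div\bu_t$ is merely bounded in time, not integrable: it does not decay (the divergence-free part of $\bu_t$ oscillates forever), so even granting an $L^\infty_x$ bound the exponential factor tends to $0$ as $t\to\infty$ and you obtain a floor only on finite time intervals, not the uniform constant $c$ claimed in \eqref{tezexglm}. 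The paper's route (Theorem \ref{downbotemp}) is instead to apply the same Moser--Alikakos lemma, Lemma \ref{lemmos}, to $\eta=-\log\theta$, which satisfies $(-\tau)_t-\Delta(-\tau)\le\nu\div\bu_t\le(\eta^++1)|\nu||\div\bu_t|$; the iteration needs only $f=|\nu||\div\bu_t|\in L^\infty(0,\infty;\ld)$, which $\F$ does provide, together with the bound $\tau\in L^{\infty}(0,\infty;L^1(\tn))$. That last bound is exactly the entropy estimate of Proposition \ref{ogrzprop} (the quantitative second law), an ingredient your proposal never invokes and without which no time-uniform positive floor for $\theta$ is obtainable. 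Your treatment of the upper bound via Moser is correct and matches Theorem \ref{upbofortemp}.
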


In the second main result, we characterize the long-time behavior of solutions obtained in Theorem \ref{existglm}. The Helmholtz decomposition plays a crucial role here. It turns out that the potential part of the displacement behaves completely different than the divergence-free part as time approaches infinity. The latter demonstrates the independence of the former, as well as of the temperature. It will be shown that it satisfies the homogeneous wave equation and exhibits oscillatory behavior. We have a different issue with the curl-free part of the displacement. It is strongly coupled to the temperature. The heat propagation process causes the damping effect, which eventually leads to the vanishing of the rotation-free part of $\bu$. On the other hand, the temperature tends to a constant function. An analogous asymptotic behavior we have in the case of \eqref{system} on the torus (see \cite{Biest}). Let us also notice that the potential part of $\bu$ and the temperature exhibit the same asymptotics as $u$ and $\theta$ in the 1d case (see \citep{BC2}). Thus, we see that they behave as the second principle of thermodynamics predicts. We denote the rotation-free part of $\bu$, $\bu_0$ and $\bb v_0$  as $\bs\chi$, $\bs\chi_0$ and $\tilde{\bs\chi}$, respectively.  Let us formulate our result regarding the long-time behavior of solutions of \eqref{system}.
\begin{tw}\label{twassm}
Let us take the initial data such that 
\begin{align}\label{assyassu}
\initcond,
\end{align}
where the constant $D$ is taken from Theorem \ref{existglm}. Moreover, let us assume that $\bu$ and $\theta$ are solutions of \eqref{system} in the sense of Definition \ref{soldef}, which start respectively from initial data $\bu_0$, $\bb v_0$ and $\theta_0$. Then, the following convergences hold
\begin{align*}
\bs\chi(t,\cdot)&\to 0\textrm{ in }\H \textrm{ when }t\to\infty,\\
\bs\chi_t(t,\cdot)&\to 0\textrm{ in }\ld \textrm{ when }t\to\infty,\\
\theta(t,\cdot)&\to \theta_{\infty}\textrm{ in }\ld \textrm{ when }t\to\infty,
\end{align*}
where $\theta_{\infty}:=\left(\ul\io \tilde{\bs\chi}_0^2dx+\ul\io |\nabla\bs\chi_0|^2dx+\io\theta_0 dx\right)/|\Omega|$.
\end{tw}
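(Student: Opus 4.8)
The plan is to reduce, via the Helmholtz decomposition already exploited in the construction of solutions, to a scalar coupled parabolic--hyperbolic system for the pair $(\div\bs{\chi},\theta)$, and then to separate the purely thermal relaxation (which drives $\theta$ toward a spatial constant) from the \emph{indirect} damping of the longitudinal mechanical energy. First I would write $\bu=\bs{\chi}+\bs{\psi}$ with $\bs{\chi}$ curl-free and $\bs{\psi}$ divergence-free. The boundary conditions \eqref{ubound} ensure both parts inherit $\cdot\,\bb n=0$ on $\partial\Omega$ and that all boundary integrals vanish, so projecting the momentum equation gives $\bs{\chi}_{tt}-(\wspnd)\nabla\div\bs{\chi}=-\nu\nabla\theta$ (the $\curl\curl$ term being divergence-free drops out) and $\bs{\psi}_{tt}+\mu\curl\curl\bs{\psi}=0$ (a conservative wave equation, explaining the oscillation of the divergence-free part). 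Since $\div\bu_t=\div\bs{\chi}_t$, setting $w:=\div\bs{\chi}$ and taking the divergence yields
\begin{align*}
w_{tt}-(\wspnd)\Delta w=-\nu\Delta\theta,\qquad \theta_t-\Delta\theta=-\nu\theta\, w_t .
\end{align*}
Because $\bs{\chi}$ is curl-free one has $\|\bs{\chi}\|_{\H}=\|w\|_{\ld}$ and $\|\bs{\chi}_t\|_{\ld}\lesssim\|w_t\|_{\ld}$ by the $\H$-norm identities of Subsection \ref{sublap}, while $\io w\,dx=\io\div\bs{\chi}\,dx=0$ makes $w$ mean-free; thus all three claimed convergences reduce to $\|\nabla w\|_{\ld}\to0$, $\|w_t\|_{\ld}\to0$ and $\|\nabla\theta\|_{\ld}\to0$. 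I would also record the two admissible inputs: the uniform bounds $c\le\theta\le C$ of Theorem \ref{existglm}, and the conserved quantity obtained by subtracting the independently conserved energy of $\bs{\psi}$ from \eqref{balance}, namely $E_\chi(t)+\io\theta\,dx=\mathrm{const}$ with $E_\chi=\ul\io|\bs{\chi}_t|^2dx+\frac{\wspnd}{2}\io(\div\bs{\chi})^2dx$; evaluated at $t=0$, after rewriting the elastic term through the $\H$-identities of Subsection \ref{sublap}, this constant equals $\theta_{\infty}|\Omega|$.

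Next I would extract the thermal dissipation. Dividing the heat equation by $\theta$, integrating, and using $\nabla\theta\cdot\bb n=0$ together with $\io\div\bs{\chi}_t\,dx=0$ gives the entropy identity $\ddt\io\log\theta\,dx=\io|\nabla\theta|^2/\theta^2\,dx\ge0$. Since $\theta\le C$ the left side is bounded above and monotone, hence convergent, so $\int_0^\infty\io|\nabla\theta|^2/\theta^2\,dx\,dt<\infty$; combined with $c\le\theta$ this yields $\int_0^\infty\|\nabla\theta(t)\|_{\ld}^2\,dt<\infty$. To pass from integrability to pointwise decay I would invoke a Barbalat argument: the uniform-in-time bounds encoded in $\F$ (in particular boundedness of $\io|\nabla\theta|^2/\theta\,dx$) together with parabolic regularity control $\ddt\|\nabla\theta\|_{\ld}^2$, so $\|\nabla\theta(t)\|_{\ld}^2$ is uniformly continuous and therefore tends to $0$. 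By the Poincaré inequality, $\theta(t)-\bar\theta(t)\to0$ in $\ld$, where $\bar\theta(t)=|\Omega|^{-1}\io\theta\,dx$.

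The core of the argument is the decay of the mechanical energy $E_\chi(t)\to0$, which is not a genuine dissipation (only $\|\nabla\theta\|_{\ld}^2$ is directly dissipated) but the indirect damping of the longitudinal field familiar from linear thermoelasticity. Here I would introduce a perturbed Lyapunov functional $\mathcal L=E_\chi+\varepsilon\io\bs{\chi}_t\cdot\bs{\chi}\,dx$ (equivalently, testing the $w$-equation against $w$ to generate the missing $\|\nabla w\|_{\ld}^2$ term) and establish $\ddt\mathcal L\le-\delta\big(\|w_t\|_{\ld}^2+\|\nabla w\|_{\ld}^2\big)+C_\delta\|\nabla\theta\|_{\ld}^2$, where the quadratic coupling $\theta\,w_t$ is absorbed using $c\le\theta\le C$. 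Integrating in time and invoking $\int_0^\infty\|\nabla\theta\|_{\ld}^2\,dt<\infty$ forces $\int_0^\infty\big(\|w_t\|_{\ld}^2+\|\nabla w\|_{\ld}^2\big)\,dt<\infty$, and a second Barbalat argument (again using the uniform $\F$-bounds for uniform continuity) gives $\|w_t\|_{\ld}\to0$ and $\|\nabla w\|_{\ld}\to0$, that is $\bs{\chi}(t)\to0$ in $\H$ and $\bs{\chi}_t(t)\to0$ in $\ld$. Finally $E_\chi\to0$ and the conservation law give $\io\theta\,dx\to\theta_{\infty}|\Omega|$, so $\bar\theta(t)\to\theta_{\infty}$, which with the previous paragraph yields $\theta(t)\to\theta_{\infty}$ in $\ld$.

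I expect the main obstacle to be exactly this indirect mechanical decay: constructing a Lyapunov functional whose dissipation controls $\|\nabla w\|_{\ld}^2$ despite the absence of direct mechanical damping, and---crucially---justifying the two Barbalat steps, i.e.\ proving that the relevant energies are uniformly continuous in time. The latter is where the full strength of $\F$ and the $L^{\infty}$ temperature bounds enter, since they supply the uniform higher-order estimates needed to bound the time derivatives of $\|\nabla\theta\|_{\ld}^2$ and of the mechanical energy, and to keep the nonlinear coupling $\theta\,\div\bs{\chi}_t$ under control throughout.
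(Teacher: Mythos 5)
Your overall strategy is genuinely different from the paper's, and unfortunately the step you yourself identify as ``the main obstacle'' is exactly where the argument breaks down. Compute the derivative of your perturbed functional $\mathcal L=E_\chi+\varepsilon\io\bs\chi_t\cdot\bs\chi\,dx$: using $\bs\chi_{tt}-(\wspnd)\nabla\div\bs\chi=-\nu\nabla\theta$ and the boundary conditions one gets
\begin{equation*}
\ddt\mathcal L=\nu\io(\theta-\bar\theta)\,\div\bs\chi_t\,dx+\varepsilon\io|\bs\chi_t|^2dx-\varepsilon(\wspnd)\io(\div\bs\chi)^2dx+\varepsilon\nu\io(\theta-\bar\theta)\div\bs\chi\,dx.
\end{equation*}
The only negative-definite term produced is $-\varepsilon(\wspnd)\|\div\bs\chi\|_{\ld}^2$; the kinetic energy enters with the \emph{wrong} sign $+\varepsilon\|\bs\chi_t\|_{\ld}^2$, and no term of the form $-\delta\|\div\bs\chi_t\|_{\ld}^2$ ever appears. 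Consequently the first term $\nu\io(\theta-\bar\theta)\div\bs\chi_t\,dx\le C\|\nabla\theta\|_{\ld}\|\div\bs\chi_t\|_{\ld}$ cannot be absorbed, and the claimed inequality $\ddt\mathcal L\le-\delta(\|w_t\|_{\ld}^2+\|\nabla w\|_{\ld}^2)+C_\delta\|\nabla\theta\|_{\ld}^2$ is unobtainable from this functional. Recovering the kinetic dissipation indirectly requires further cross terms coupling $\theta$ with the mechanical variables (as in the linear thermoelastic decay literature), and making that work here against the \emph{nonlinear} coupling $\nu\theta\div\bs\chi_t$ is precisely the hard analysis you have not carried out. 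A secondary gap: your Barbalat step for $\|\nabla\theta(t)\|_{\ld}^2\to0$ needs uniform-in-time control of $\ddt\|\nabla\theta\|_{\ld}^2\le 2\|\Delta\theta\|_{\ld}\|\theta_t\|_{\ld}$, but the paper's estimates only give $\theta\in L^2_{\loc}(0,\infty;\htwo)$ and $\theta_t\in L^2_{\loc}(0,\infty;\ld)$ with constants growing in $T$, which does not yield the uniform continuity Barbalat requires. (Minor: since $\curl\bs\chi=0$, the target is $\|\div\bs\chi\|_{\ld}\to0$, not $\|\nabla w\|_{\ld}\to0$.)

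For contrast, the paper sidesteps the indirect-damping problem entirely with a soft $\omega$-limit (LaSalle-type) argument: it extracts limits $(\tilde{\bs\chi},\tilde{\bb v},\theta_\infty)$ along $t_n\to\infty$ by compactness, shows $\F\le D$ passes to the limit so the limit point generates a solution $(\bar{\bs\chi},\bar\theta)$, uses the stability estimate to identify $S(h)$ of the limit with the limit at times $t_n+h$, and then uses the monotone, bounded entropy $t\mapsto\io\log\theta\,dx$ to force $\nabla\bar\theta\equiv0$ on the limit trajectory; the residual system then rigidly forces $\bar{\bs\chi}=0$, and energy conservation identifies $\theta_\infty$. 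That route needs only the qualitative convergence of the entropy, not any quantitative integrability of the mechanical energy. If you want to salvage your quantitative approach, you must either build the correct multi-term Lyapunov functional for the nonlinear coupled system or switch to the compactness argument.
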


Let us discuss the literature. As we mentioned above, problem \eqref{system} is being studied for the first time with the proposed boundary conditions for $\bu$. However, the study of similar systems with different boundary conditions has a long history. First research was fairly devoted to studying linear thermoelasticity (see, for instance, \cite{lin1, lin2, Rackebook} and references therein). One of the first~approaches to the nonlinear models was paper \cite{Slem}. The author proved the existence of local-in-time solutions for the one-dimensional nonlinear thermoelsticity. Moreover, the existence of global solutions has also been established for initial data near steady states. Further results concerning the local-in-time existence or the existence of global ones in a neighborhood of steady states with the nonnegative temperature were obtained in  \cite{Racke1, Racke2, Racke3, Hrusa2}. 

Until recently, there were no results concerning the global existence of solutions for nonlinear problems, even in one-dimensional cases. However, the authors got the existence and strong-weak uniqueness of global measure-valued solutions in \cite{CMT}. We would like to emphasize that this result applies to higher dimensions. The global existence of unique solutions with a positive temperature in 1d was obtained in paper \cite{BC}. In article \cite{Biest}, thermoelasticity was studied on the multidimensional torus. The authors obtained the global unique solutions, as well as the positivity of temperature, for small initial data. Winkler approached the~existence of weak solutions of the thermoviscoelasticity model in 1D in \cite{Winkler2, Winkler1}. Moreover, thermoviscoelasticity in higher domains has been researched in \cite{Owczarek1, Owczarek}.  The finite-time blow-up was demonstrated for a nonlinear problem in the 1d case in \cite{DafHsi}. However, the authors studied the system with nonlinearity $(p(u_x)u_x)_x$ instead of the Laplacian (or the Lam\'e operator). The result was extended to a more general case (and a more physically appropriate one) in \cite{Hrusa1}. 

Regarding the long-time behavior, we should mention works \cite{Slem, Rackebook, TC2}. The asymptotics near steady states in a one-dimensional setting was studied there. The complete long-time behavior for the simplest nonlinear model for the one-dimensional setting was demonstrated in \cite{BC2}. Moreover, the authors proved that the convergence to equilibrium is in exponential rate in \cite{thermorate}. The full study of the time-asymptotics for higher dimensions is in \cite{Biest}. However, the authors deal with the set without the boundary, i.e., the torus.

At the end of the introduction, let us sketch the plan of the paper. We cite the general results which we shall use in the further part of the article in Section \ref{prel}. In Subsection \ref{subent}, we analyze basic facts about the entropy of \eqref{system}. This result was already done in \cite{CMT}, but not with boundary conditions \eqref{ubound} for $\bu$. Therefore, we decided to repeat the proof. Subsection \ref{sublap} is devoted to the study of the Poisson equation with boundary conditions \eqref{ubound}. It gives us the sequence of eigenfunctions of the Laplacian which satisfy \eqref{ubound}. This, in turn, brings us the basis of $\ld$ and the basis of $\H$, which satisfies \eqref{ubound}. The main burden of proof for the existence of solutions is found in Section \ref{druga}. We prove a priori estimates there. The functional $\F$ is our main tool. We prove estimates that are independent of time for initial data such that $\F\leq D$. The constant $D$ occurs during the reasoning. To close the estimates, we require the $L^{\infty}$-estimate for the temperature. We prove it via the Moser iteration. Consequently, we obtain a bound for the temperature from below. 

Then, we proceed to the proof of the global existence for small data. We establish it in Section \ref{secexst}. First, in Subsection \ref{subapp}, we construct the sequence of approximate solutions. We employ the half-Galerkin method there. The same technique was applied in \cite{CMT}. The earlier investigation of the eigenfunctions of the Laplacian with \eqref{ubound} enables the half-Galerkin construction. In Subsection \ref{subex}, we prove the existence of global solutions of \eqref{system} for the initial values such that $\F\leq D$. The constant $D$ is taken from Section \ref{druga}. The estimates in this section allow transition to the limit with the sequence of the approximate solutions. Then, it is shown that the limit functions are solutions. The final subsection, Subsection \ref{subuniq}, is devoted to proving the uniqueness of solutions.

In the last section, Section \ref{czwarta}, we study the long-time behavior of solutions of \eqref{system}. First, in Subsection \ref{subdec}, we apply the Helmholtz decomposition to $\bu$. It occurs that the displacement responds very well to Helmholtz decomposition. It is shown that the divergence-free part satisfies the homogeneous wave equation. On the other hand, we prove that the potential part of $\bu$ and the temperature are coupled in analogous equations as \eqref{system}. Finally, in Subsection \ref{subas}, first, we show that the divergence-free part oscillates when $t\to\infty$. Next, the decay of the curl-free part of the displacement and the convergence of the temperature to a constant function at infinity are proven. The utility of the second law of thermodynamics is very important in this last part.

\section{Preliminaries}\label{prel}
This section is devoted to introducing general results that will be utilized later. We start by citing the lemma from \cite{EvansEntropy}. It will be crucial in proving Lemma \ref{wonderfulformula}. It states that a certain integral, which will appear in the proof, is nonnegative. 
\begin{lem}\label{Evansentr}
Let $\Omega\subset\rn$ be a smooth, bounded, and convex set. Suppose that $v\in C^2(\overline\Omega)$ and 
\begin{align*}
\frac{\partial v}{\partial\bb n}=0\textrm{ on }\partial\Omega.
\end{align*}
Then, the inequality
\begin{align*}
\frac{\partial|\nabla v|^2}{\partial\bb n}\leq 0\textrm{ on }\partial\Omega
\end{align*}
holds.
\end{lem}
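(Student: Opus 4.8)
The plan is to reduce the inequality to a pointwise statement on $\partial\Omega$ about the second fundamental form, whose nonnegativity is exactly the analytic content of convexity. Fix a boundary point and let $\bb n$ denote the outward unit normal, which the smoothness of $\partial\Omega$ lets us take $C^1$ along (and near) the boundary. Writing $\partial/\partial\bb n=\sum_j n_j\partial_{x_j}$, I would first record the elementary identity
$$\frac{\partial|\nabla v|^2}{\partial\bb n}=2\sum_{i,j}n_j\,v_{x_i}v_{x_ix_j}=2\,\big((\nabla v\cdot\nabla)\nabla v\big)\cdot\bb n,$$
so that everything is governed by the Hessian $D^2v$ contracted against $\nabla v$ and $\bb n$.

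The key geometric observation is that the Neumann condition $\partial v/\partial\bb n=\nabla v\cdot\bb n=0$ forces $\nabla v$ to be tangent to $\partial\Omega$ at every boundary point. I would then differentiate the boundary identity $\nabla v\cdot\bb n=0$ along $\partial\Omega$ in an arbitrary tangential direction $\bs\tau$: taking a curve $\gamma\subset\partial\Omega$ with $\gamma'(0)=\bs\tau$ and applying the chain rule gives
$$(D^2v\,\bs\tau)\cdot\bb n+\nabla v\cdot(D_{\bs\tau}\bb n)=0,$$
where $D_{\bs\tau}\bb n$ is the derivative of the normal along $\gamma$. Specializing $\bs\tau=\nabla v$ (legitimate since $\nabla v$ is tangential) and comparing with the identity above yields
$$\frac12\frac{\partial|\nabla v|^2}{\partial\bb n}=(D^2v\,\nabla v)\cdot\bb n=-\nabla v\cdot(D_{\nabla v}\bb n).$$

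It remains to recognize the right-hand side as (minus) the second fundamental form $\mathrm{II}(\nabla v,\nabla v):=\nabla v\cdot(D_{\nabla v}\bb n)$ of $\partial\Omega$ with respect to the outward normal. Convexity of $\Omega$ is precisely the statement that this quadratic form is positive semidefinite on tangent vectors, as the model case of a sphere of radius $R$, where $D_{\bs\tau}\bb n=\bs\tau/R$ and $\mathrm{II}(\bs\tau,\bs\tau)=|\bs\tau|^2/R\ge0$, makes transparent. Hence $\partial|\nabla v|^2/\partial\bb n=-2\,\mathrm{II}(\nabla v,\nabla v)\le0$, which is the claim. I expect the main obstacle to be purely bookkeeping: fixing the sign conventions for $\bb n$ and for $\mathrm{II}$ consistently, and justifying the intrinsic differentiation of $\nabla v\cdot\bb n$ along $\partial\Omega$ rigorously, e.g. by passing to boundary normal coordinates or by using the signed distance function, whose Hessian restricts to $\mathrm{II}$ on the tangent space. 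The regularity $v\in C^2(\overline\Omega)$ is exactly what is needed to make $D^2v$ meaningful up to the boundary, and smoothness of $\partial\Omega$ ensures $\bb n$ is $C^1$ so that $\mathrm{II}$ is well defined.
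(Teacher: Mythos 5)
Your argument is correct and complete: the Neumann condition makes $\nabla v$ tangential, tangential differentiation of $\nabla v\cdot\bb n=0$ converts $(D^2v\,\nabla v)\cdot\bb n$ into minus the second fundamental form evaluated at $\nabla v$, and convexity gives the sign. Note that the paper does not prove this lemma at all; it is cited from the reference \cite{EvansEntropy}, and your computation is precisely the standard proof given there, so there is nothing to reconcile beyond the sign conventions you already flag (outward normal, $\mathrm{II}(\bs\tau,\bs\tau)=\bs\tau\cdot D_{\bs\tau}\bb n\ge 0$ for convex $\Omega$), and the observation that $C^2(\overline\Omega)$ regularity of $v$ and smoothness of $\partial\Omega$ suffice to justify the tangential differentiation along a boundary curve.
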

The next lemma presents the unusual inequality between the norms of the gradient of a function and its Laplacian. The proof can be found in \cite{Fuestin}.
\begin{lem}\label{inpion}
If $v\in \htwo$ and $\frac{\partial v}{\partial \bb n}=0$ on $\partial\Omega$, then the following inequality
\begin{align*}
\|\nabla v\|_{\ld}\leq C\|\Delta v\|_{\ld}
\end{align*}
is satisfied and $C$ depends only on $d$ and $\Omega$.
\end{lem}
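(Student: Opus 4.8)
The plan is to exploit the Neumann condition through integration by parts and then absorb the troublesome zeroth-order term by a Poincaré--Wirtinger argument. Since the inequality is homogeneous of the same degree on both sides but the two norms ignore additive constants, the key structural observation is that the constant mode of $v$ must be quotiented out, and the hypothesis $\partial v/\partial\bb n=0$ is exactly what makes this possible.

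First I would write, for $v\in\htwo$, Green's identity
\begin{align*}
\io|\nabla v|^2dx=-\io v\,\Delta v\,dx+\int_{\partial\Omega}v\frac{\partial v}{\partial\bb n}\,dS,
\end{align*}
which is valid by density of smooth functions in $\htwo$. The boundary integral vanishes by the assumption $\partial v/\partial\bb n=0$, leaving $\io|\nabla v|^2dx=-\io v\,\Delta v\,dx$. Next I would subtract the mean value $\bar v:=\frac{1}{|\Omega|}\io v\,dx$. The point is that, again by the Neumann condition, the divergence theorem gives $\io\Delta v\,dx=\int_{\partial\Omega}\frac{\partial v}{\partial\bb n}\,dS=0$, so that $\io\bar v\,\Delta v\,dx=0$ and therefore
\begin{align*}
\io|\nabla v|^2dx=-\io(v-\bar v)\,\Delta v\,dx.
\end{align*}

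Applying the Cauchy--Schwarz inequality to the right-hand side yields $\|\nabla v\|_{\ld}^2\leq\|v-\bar v\|_{\ld}\,\|\Delta v\|_{\ld}$, and then the Poincaré--Wirtinger inequality $\|v-\bar v\|_{\ld}\leq C_P\|\nabla v\|_{\ld}$, whose constant $C_P$ depends only on $d$ and $\Omega$ (recall $\Omega$ is bounded with smooth boundary), gives
\begin{align*}
\|\nabla v\|_{\ld}^2\leq C_P\,\|\nabla v\|_{\ld}\,\|\Delta v\|_{\ld}.
\end{align*}
If $\|\nabla v\|_{\ld}=0$ the claim is trivial; otherwise I divide by $\|\nabla v\|_{\ld}$ to obtain $\|\nabla v\|_{\ld}\leq C_P\|\Delta v\|_{\ld}$, with $C:=C_P$ depending only on $d$ and $\Omega$, as required.

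I do not expect a serious obstacle here: the only conceptual subtlety is recognizing that the constant mode must be removed and that the Neumann condition serves a double role, killing both the boundary term in Green's identity and the pairing of the mean with $\Delta v$. The remaining ingredients are the standard Green's formula and the Poincaré--Wirtinger inequality, both available on a bounded convex smooth domain.
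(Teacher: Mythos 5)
Your argument is correct: Green's identity with the Neumann condition kills the boundary term, the same condition gives $\io\Delta v\,dx=0$ so the mean can be inserted for free, and Cauchy--Schwarz plus Poincar\'e--Wirtinger (valid since $\Omega$ is bounded, convex, hence connected, with smooth boundary) closes the estimate. The paper does not prove this lemma itself but defers to \cite{Fuestin}, where the argument is essentially the same integration-by-parts-plus-Poincar\'e reasoning, so your proposal matches the intended proof.
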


As in previous papers about thermoelasticity (\cite{BC2, Biest, BC}), the following inequality will also be important. It estimates the integral that will appear in the calculations. The proof can be found in \cite{CFHS}.
\begin{lem}\label{theCin}
Let $w\in C^2(\Omega)$ be a positive function such that $\frac{\partial w}{\partial\bb n}=0$ on $\partial\Omega$, then we have
\begin{align*}
\io |\nabla^2w^{\sul}|^2dx\leq C\io w|\nabla^2\log w|^2dx,
\end{align*}
where $C=1+\frac{\sqrt d}2+\frac d8$.
\end{lem}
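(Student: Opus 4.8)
The plan is to pass to the logarithmic variable $v:=\log w$, which is $C^2$ since $w>0$, and to reduce the inequality to an estimate that can be closed by a single integration by parts exploiting the Neumann condition. Differentiating $w^{\sul}=e^{v/2}$ twice gives the pointwise identity $\nabla^2 w^{\sul}=\ul w^{\sul}\bigl(\nabla^2 v+\ul\nabla v\otimes\nabla v\bigr)$, while $w|\nabla^2\log w|^2=w|\nabla^2 v|^2$. Expanding the Hilbert--Schmidt square and integrating, one obtains
\begin{align*}
\io|\nabla^2 w^{\sul}|^2dx=\frac14\,I+\frac14\,P+\frac1{16}\,Q,
\end{align*}
where $I:=\io w|\nabla^2 v|^2dx$ is exactly the right-hand side of the claim, $P:=\io w\,(\nabla v)^{T}\nabla^2 v\,\nabla v\,dx$, and $Q:=\io w|\nabla v|^4dx\ge0$. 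The whole matter is thus to dominate $P$ and $Q$ by $I$.

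The key step --- and the only one using the boundary hypothesis --- is an integration by parts. Since $(\nabla v)^{T}\nabla^2 v\,\nabla v=\ul\nabla v\cdot\nabla|\nabla v|^2$ and $w\nabla v=\nabla w$, integrating by parts produces a boundary term proportional to $\tfrac{\partial w}{\partial\bb n}$, which vanishes; using $\Delta w=w|\nabla v|^2+w\Delta v$ afterwards yields the identity $2P+Q+R=0$, with $R:=\io w|\nabla v|^2\Delta v\,dx$. I would then estimate $|P|\le Q^{\sul}I^{\sul}$ by Cauchy--Schwarz, and $|R|\le\sqrt d\,Q^{\sul}I^{\sul}$ using $|\Delta v|\le\sqrt d\,|\nabla^2 v|$ (Cauchy--Schwarz on the trace). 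Feeding these into $Q=-2P-R$ gives $Q\le(2+\sqrt d)Q^{\sul}I^{\sul}$, hence $Q\le(2+\sqrt d)^2 I$ and $|P|\le(2+\sqrt d)I$.

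Substituting these bounds gives
\begin{align*}
\io|\nabla^2 w^{\sul}|^2dx\le\Bigl(\frac14+\frac{2+\sqrt d}{4}+\frac{(2+\sqrt d)^2}{16}\Bigr)I=\Bigl(1+\frac{\sqrt d}{4}\Bigr)^2 I\le\Bigl(1+\frac{\sqrt d}{2}+\frac d8\Bigr)I,
\end{align*}
which is the assertion with $C=1+\frac{\sqrt d}{2}+\frac d8$. The main obstacle is conceptual rather than computational: the quartic term $\tfrac14 w|\nabla v|^4$ appearing in $|\nabla^2 w^{\sul}|^2$ can never be dominated pointwise by $w|\nabla^2 v|^2$, so the inequality is genuinely nonlocal and survives only because the integration by parts --- legitimate precisely thanks to $\partial w/\partial\bb n=0$ --- ties the cross term $P$ and the quartic $Q$ back to $I$. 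Everything else is bookkeeping with Cauchy--Schwarz and the elementary trace bound; in particular convexity of $\Omega$ plays no role here, only the Neumann condition does.
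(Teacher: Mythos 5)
The paper does not prove this lemma at all --- it simply cites \cite{CFHS} for the proof --- so there is no in-paper argument to compare against; your proposal has to stand on its own, and it does. The identity $\nabla^2 w^{\sul}=\ul w^{\sul}\left(\nabla^2 v+\ul\nabla v\otimes\nabla v\right)$, the resulting decomposition into $\frac14 I+\frac14 P+\frac1{16}Q$, the integration by parts giving $2P+Q+R=0$ (with the boundary term killed exactly by $\partial w/\partial\bb n=0$), and the Cauchy--Schwarz bounds $|P|\le Q^{\sul}I^{\sul}$, $|R|\le\sqrt d\,Q^{\sul}I^{\sul}$ all check out; this is the standard route to such Fisher-information inequalities and is presumably close to what \cite{CFHS} does. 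Your final constant is in fact $\left(1+\frac{\sqrt d}{4}\right)^2=1+\frac{\sqrt d}{2}+\frac d{16}$, which is slightly sharper than the stated $1+\frac{\sqrt d}{2}+\frac d8$, so the claimed inequality follows a fortiori. Two cosmetic remarks: the hypothesis $w\in C^2(\Omega)$ should be read as regularity up to $\overline\Omega$ (otherwise neither the Neumann condition nor your integration by parts makes sense), and you are right that convexity of $\Omega$ is not needed here --- it enters only in Lemma \ref{Evansentr}.
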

In the following theorem, the Helmholtz decomposition of a vector field is stated. It will be applied to decompose the displacement in Subsection \ref{subdec}. For the proof, see e.g. \cite{FN}.
\begin{tw}\label{thdecomp}
Let us assume that $\bb v\in H^k(\tn)$ is a vector field, where $k\in\n\cup\{0\}$. Then, $\bb v$ can be decomposed uniquely as follows
\begin{align*}
\bb v=H\bb v+\hp\bb v,
\end{align*}
where $H\bb v, \hp\bb v\in H^k(\tn)$ satisfy
\begin{equation*}
\div H\bb v=0\ant H\bb v\cdot\bb n\textrm{ on }\tn,
\end{equation*}
and there exists function $\phi\in H^{k+1}(\tn)$ such that
\begin{align*}
\io \phi\, dx=0\quad\textrm{ and }\quad\hp \bb v=\nabla\phi.
\end{align*}
Moreover, $H\bb v$ and $\hp\bb v$ are orthogonal in $L^2(\tn)$.
\end{tw}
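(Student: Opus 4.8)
The plan is to reduce the vector decomposition to a scalar Neumann problem and to read off both summands from its solution. Given $\bb v\in H^k(\tn)$, I would seek a scalar potential $\phi$ solving the Poisson equation
\begin{align*}
\Delta\phi=\div\bb v\ \textrm{ in }\Omega,\qquad \frac{\partial\phi}{\partial\bb n}=\bb v\cdot\bb n\ \textrm{ on }\partial\Omega.
\end{align*}
The solvability (compatibility) condition for this Neumann problem,
\[
\io\div\bb v\,dx=\int_{\partial\Omega}\bb v\cdot\bb n\,dS,
\]
holds automatically by the divergence theorem, so a solution exists and is unique up to an additive constant; I would fix that constant by imposing $\io\phi\,dx=0$. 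This immediately yields the normalization required in the statement.

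Next I would set $\hp\bb v:=\nabla\phi$ and $H\bb v:=\bb v-\nabla\phi$. The defining relations then hold by construction: $\div H\bb v=\div\bb v-\Delta\phi=0$ in $\Omega$, and $H\bb v\cdot\bb n=\bb v\cdot\bb n-\partial_{\bb n}\phi=0$ on $\partial\Omega$. For the regularity claim, when $k\geq 1$ one has $\div\bb v\in H^{k-1}(\tn)$ and, by the trace theorem, the boundary datum $\bb v\cdot\bb n\in H^{k-1/2}(\partial\Omega)$, so elliptic regularity for the Neumann problem on the smooth domain $\Omega$ gives $\phi\in H^{k+1}(\tn)$; hence $\nabla\phi\in H^k(\tn)$ and $H\bb v=\bb v-\nabla\phi\in H^k(\tn)$. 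For $k=0$ I would instead use the weak formulation
\[
\io\nabla\phi\cdot\nabla\psi\,dx=\io\bb v\cdot\nabla\psi\,dx\qquad\textrm{for all }\psi\in\hom,
\]
solvable by the Lax--Milgram lemma on the quotient space $\hom/\r$, whose coercivity comes from the Poincar\'e--Wirtinger inequality. This produces $\phi\in\hom$ with zero mean, and the divergence and normal-trace conditions are then read in the distributional, respectively trace, sense.

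The $L^2$-orthogonality follows from a single integration by parts, using the two defining properties of $H\bb v$:
\[
\io H\bb v\cdot\nabla\phi\,dx=-\io(\div H\bb v)\,\phi\,dx+\int_{\partial\Omega}(H\bb v\cdot\bb n)\,\phi\,dS=0.
\]
For uniqueness, if $\bb v$ admits two such decompositions, their difference is a field that is simultaneously a gradient $\nabla\eta$ with $\io\eta\,dx=0$ and divergence-free with vanishing normal trace; the same integration by parts gives $\io|\nabla\eta|^2\,dx=0$, so $\eta$ is constant, and the mean-zero normalization forces $\eta=0$. Hence both summands are uniquely determined.

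The main obstacle is the elliptic-regularity step, namely upgrading the weak solution of the Neumann problem to $H^{k+1}(\tn)$ for every $k$; this is exactly where the smoothness of $\partial\Omega$ is used, and the $k=0$ case requires the separate weak/distributional reading of the structural conditions on $H\bb v$. Since this is classical Helmholtz--Weyl theory, I would ultimately invoke the cited reference \cite{FN} rather than reproduce the full regularity argument, treating the steps above as the guiding outline.
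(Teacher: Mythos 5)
Your outline is the standard Helmholtz--Weyl argument (scalar Neumann problem for the potential, elliptic regularity, integration by parts for orthogonality and uniqueness) and it is correct, including the compatibility condition and the weak reading of the normal trace when $k=0$. The paper offers no proof of its own --- it simply cites \cite{FN} --- so your proposal, which likewise defers to that reference for the regularity details, is entirely consistent with the paper's treatment.
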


\subsection{Entropy}\label{subent}

We introduce simple facts about the entropy of \eqref{system} in this subsection. Let us assume that $\bu$ and $\theta$ are solutions of \eqref{system} in the sense of Definition \ref{soldef}. Moreover, let us assume for a while that $\theta>0$ (it will be proven below). Then, we can study the evolution of the entropy. Indeed, let us denote $\tau:=\log\theta$. Then, the following equation is satisfied
\begin{align}\label{eqtau}
\tau_t-\Delta\tau-|\nabla\tau|^2=-\nu\div u_t
\end{align}
for almost all $(t,x)\in (0,T)\times\tn$.
An analogous line of reasoning to that in \cite{CMT} leads us to the following proposition.
\begin{prop}\label{ogrzprop}
Let $\theta>0$ and $\bu$ be solutions of the problem \eqref{system} in the sense of Definition \ref{soldef}. Then, the following equation
\begin{align}\label{sepri}
\ddt\io\tau dx=\io|\nabla\tau|^2dx
\end{align}
is satisfied for almost all $t\in (0,\infty)$. Moreover, we have that
\begin{align*}
\nabla\tau\in L^2((0,\infty)\times \tn)
\end{align*}
and
\begin{align*}
\tau\in L^{\infty}(0,\infty;L^1(\tn)).
\end{align*}
\end{prop}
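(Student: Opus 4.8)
The plan is to establish \eqref{sepri} by integrating the entropy equation \eqref{eqtau} over $\Omega$, and then to extract the two integrability conclusions from this identity together with the lower bound on $\theta$ provided by Theorem \ref{existglm}. First I would take equation \eqref{eqtau}, namely $\tau_t-\Delta\tau-|\nabla\tau|^2=-\nu\div\bu_t$, and integrate it in space. The Laplacian term vanishes: since $\nabla\theta\cdot\bb n=0$ on $\partial\Omega$ and $\tau=\log\theta$, we have $\nabla\tau\cdot\bb n=\tfrac{1}{\theta}\nabla\theta\cdot\bb n=0$, so $\io\Delta\tau\,dx=\int_{\partial\Omega}\nabla\tau\cdot\bb n\,dS=0$. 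The right-hand side also vanishes after integration, because $\io\div\bu_t\,dx=\int_{\partial\Omega}\bu_t\cdot\bb n\,dS=0$ by the boundary condition $\bu\cdot\bb n=0$ (differentiated in time). What survives is precisely $\ddt\io\tau\,dx=\io|\nabla\tau|^2\,dx$, which is \eqref{sepri}.

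Next I would integrate \eqref{sepri} in time over $(0,t)$ to obtain, for almost every $t$,
\begin{align*}
\io\tau(t,\cdot)\,dx-\io\tau(0,\cdot)\,dx=\int_0^t\io|\nabla\tau|^2\,dx\,ds.
\end{align*}
The crucial input now is the two-sided bound $c\le\theta\le C$ from Theorem \ref{existglm}: it gives $\log c\le\tau\le\log C$ pointwise, hence $\io\tau(t,\cdot)\,dx$ is uniformly bounded in $t$, and in fact $\|\tau(t,\cdot)\|_{L^1(\Omega)}\le|\log c|\,|\Omega|+|\log C|\,|\Omega|$ (using $|\tau|\le\max(|\log c|,|\log C|)$), which yields $\tau\in L^{\infty}(0,\infty;L^1(\tn))$. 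Returning to the integrated identity, the left-hand side is bounded independently of $t$, so the monotone, nonnegative quantity $\int_0^t\io|\nabla\tau|^2\,dx\,ds$ stays bounded as $t\to\infty$; letting $t\to\infty$ gives $\nabla\tau\in L^2((0,\infty)\times\tn)$.

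The only genuinely delicate point is rigor at the level of regularity: equation \eqref{eqtau} is a pointwise-a.e.\ statement, so I would justify the spatial integration by parts and the time integration within the regularity class \eqref{reggl}, where $\theta\in L^{\infty}(0,\infty;\hom)$ with $c\le\theta\le C$ guarantees $\tau=\log\theta\in L^{\infty}(0,\infty;\hom)$ and $\nabla\tau=\nabla\theta/\theta\in L^2$, so all integrals are well defined and the boundary terms make sense in the trace sense. I expect the main obstacle to be this bookkeeping — confirming that $\tau$ inherits enough regularity and that the distributional identity \eqref{eqtau} can be tested against the constant function to produce \eqref{sepri} — rather than any substantive analytic difficulty, since the positivity and upper bound on $\theta$ from Theorem \ref{existglm} do all the heavy lifting. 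I would note in passing that \eqref{sepri} expresses entropy production and is exactly the manifestation of the second law of thermodynamics referenced in the introduction.
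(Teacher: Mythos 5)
Your derivation of \eqref{sepri} is exactly the paper's: integrate \eqref{eqtau} over $\Omega$ and use the Neumann condition on $\theta$ together with $\bu_t\cdot\bb n=0$ to annihilate the boundary term and the source term. The gap is in the second half. You deduce both $\tau\in L^{\infty}(0,\infty;L^1(\tn))$ and $\nabla\tau\in L^2(\ot)$ from the two-sided bound $c\le\theta\le C$ of Theorem \ref{existglm}, but that lower bound is itself obtained later, via Theorem \ref{downbotemp}, whose proof applies the Moser iteration of Lemma \ref{lemmos} to $\eta:=-\tau$; the constant in Lemma \ref{lemmos} depends on $\|\eta^+\|_{L^{\infty}(0,\infty;L^1(\tn))}=\|\tau^-\|_{L^{\infty}(0,\infty;L^1(\tn))}$, which is precisely (part of) the conclusion of the present proposition. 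So your argument is circular. It would moreover only cover solutions with small initial data, whereas the proposition is asserted for any solution with $\theta>0$ in the class of Definition \ref{soldef}.

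The paper closes the argument without any pointwise lower bound on $\theta$. Integrating \eqref{sepri} in time gives $-\io\tau\,dx+\int_0^t\io|\nabla\tau|^2\,dx\,ds=-\io\tau_0\,dx$, and adding this identity to the energy balance \eqref{balance} produces a left-hand side containing $\io(\theta-\tau)\,dx$ plus the dissipation integral. Since $s-\log s\ge1$ for all $s>0$, every term on that left-hand side is nonnegative, so each is bounded uniformly in $t$ by the finite right-hand side. This yields $\nabla\tau\in L^2(\ot)$ directly, and, combined with the bound on $\io\theta\,dx$ from \eqref{balance} and the pointwise inequality $\tau^+\le\theta$, it gives the uniform $L^1$ bound on $\tau$. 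You should replace the appeal to Theorem \ref{existglm} by this combination of the integrated entropy identity with energy conservation.
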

\begin{proof}
Equation \eqref{sepri} follows immediately from Equation \eqref{eqtau}. Indeed, we integrate \eqref{eqtau} over $\Omega$ and utilize the boundary conditions for $\bu$ and $\theta$. Next, let us fix $t>0$ and let us integrate \eqref{sepri} over $[0,t]$. It gives that 
\begin{align*}
-\io\tau dx+\int_0^t\io|\nabla\tau|^2dxds=-\io\tau_0dx,
\end{align*}
where $\tau_0:=\ln\theta_0$. We add the above inequality to \eqref{balance}. It gives that
\begin{align*}
\ul\io \bb u_t^2dx+\frac\wspnd2\io (\div\bb u)^2dx+\frac\mu2\io|\curl \bb u|^2dx+\io(\theta-\tau) dx+\int_0^t\io|\nabla\tau|^2dxds\\
=\ul\io \bb v_0^2dx+\frac\wspnd2\io (\div\bb u_0)^2dx+\frac\mu2\io|\curl \bb u_0|^2dx+\io(\theta_0-\tau_0)dx.
\end{align*}
The last integral on the right-hand side is finite and positive, so integrals on the left-hand side are bounded for almost all $t$. It asserts the thesis.
\end{proof}

Let us perceive that \eqref{sepri} is a quantitative version of the second principle of thermodynamics. In Section \ref{czwarta}, it will play a crucial role in demonstrating that the $\omega$-set of system \eqref{system} consists of a single element.

\subsection{Eigenfunctions of vector Laplacian}\label{sublap}
In this subsection, we will study the eigensystem of the following problem: we want to find $\bb w\colon\Omega\to\rn$ and $\xi\in\r$
\begin{align}\label{probeig}
\begin{cases}
-\Delta\bb w=\xi\bb w, & \textrm{in } \Omega,\\
\bb w\cdot\bb n=0, & \textrm{on } \partial\Omega,\\
\N\bb w=0, & \textrm{on } \partial\Omega. 
\end{cases}
\end{align}
Thus, we want to find eigenfunctions of $-\Delta$ with the boundary conditions for the displacement in \eqref{system}. They will allow us to construct the sequence of the approximate solutions of \eqref{system} in Subsection \ref{subapp}.

Let us remind the space
\begin{align*}
\H:=\left\{\bb v\in\hom\colon \bb v\cdot \bb n=0 \textrm{ on }\partial\Omega\right\}.
\end{align*}
We define a bilinear form $B\colon\H\times\H\to\r$,
\begin{align}\label{scprod}
B(\bb v,\bb w):=\io\div\bb v\div\bb wdx+\io\curl\bb v\cdot\curl\bb wdx.
\end{align}
It is obvious that $B$ is bounded. Moreover, Theorem IV.4.8 in \cite{Boyer} and Remark 3.5 of Chapter I in \cite{G-R} guarantee that $B$ is coercive on $\H$ with the norm $\|\cdot\|_{\hom}$. Hence, by the Lax-Milgram Theorem, we obtain that for any $\bb f\in\ld$ there exists a unique $\bb w\in\H$ such that 
\begin{align*}
B(\bb w, \bb v)=(\bb f,\bb v)_{\ld}\quad \textrm{ for all }\bb v\in\H.
\end{align*}

It is easy to see that the above formula is a weak formulation of the following problem
\begin{align}\label{veclap}
\begin{cases}
-\Delta\bb w=\bb f, & \textrm{in } \Omega,\\
\bb w\cdot\bb n=0, & \textrm{on } \partial\Omega,\\
\N\bb w=0, & \textrm{on } \partial\Omega. 
\end{cases}
\end{align}
Moreover, Theorem IV.9.6 of \cite{Boyer} gives us that $\bb w\in\htwo$. Let us notice that there is a proof only for $d=3$, but it is analogous in the case when $d=2$. Therefore, we can state the following result.
\begin{tw}\label{prexpr}
Let us assume that $\tn$ is a simply connected domain of $\rn$ with a boundary of class $C^{2,1}(\tn)$. Then, for every $\bb f\in\ld$ there exists a unique $\bb w\in\htwo\cap\H$ such that $\bb w$ is a solution of $\eqref{veclap}$. The Poisson equation in \eqref{veclap} is satisfied pointwise for almost all $x\in\Omega$, and the boundary conditions are satisfied in the sense of traces. Moreover, the following inequality is satisfied
\begin{align*}
\|\bb w\|_{\htwo}\leq C\|\bb f\|_{\ld},
\end{align*}
where $C>0$ depends on $\Omega$ and $d$.
\end{tw}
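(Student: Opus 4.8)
The plan is to establish the regularity result \eqref{veclap} in three stages: first solve the weak problem via Lax–Milgram (already done in the preamble to the statement), then upgrade the weak solution to $H^2$ and extract the pointwise equation and traces, and finally verify the elliptic estimate. Since the author has already cited Theorem IV.9.6 of \cite{Boyer} for the $H^2$-regularity when $d=3$ and noted that the $d=2$ case is analogous, the main task here is to assemble these pieces cleanly and to handle the passage from the weak formulation to the strong (pointwise) equation together with the boundary conditions in the trace sense.

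**The main steps.** First I would recall that the bilinear form $B$ from \eqref{scprod} is bounded and, by Theorem IV.4.8 in \cite{Boyer} together with Remark 3.5 of Chapter I in \cite{G-R}, coercive on $\H$ with respect to the $\hom$-norm; hence for every $\bb f\in\ld$ the Lax–Milgram theorem yields a unique $\bb w\in\H$ with $B(\bb w,\bb v)=(\bb f,\bb v)_{\ld}$ for all $\bb v\in\H$. Second, I would invoke Theorem IV.9.6 of \cite{Boyer} (in the stated geometric setting: $\tn$ simply connected with $C^{2,1}$ boundary) to conclude $\bb w\in\htwo$, noting as the author does that the cited proof is written for $d=3$ but carries over verbatim to $d=2$. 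Third, having $\bb w\in\htwo\cap\H$, I would integrate by parts in the weak identity: using the identity $-\Delta\bb w=\nabla\div\bb w-\curl\curl\bb w$ (valid in both $d=2$ and $d=3$, as established in the introduction), the volume term of $B(\bb w,\bb v)$ reproduces $\io(-\Delta\bb w)\cdot\bb v\,dx$ plus boundary integrals. Choosing test functions $\bb v\in C_c^{\infty}(\Omega)\cap\H$ first kills the boundary terms and gives $-\Delta\bb w=\bb f$ pointwise a.e. in $\Omega$. The boundary condition $\bb w\cdot\bb n=0$ holds in the trace sense because $\bb w\in\H$ by construction, and $\N\bb w=0$ emerges as the natural boundary condition: reinserting general $\bb v\in\H$ and subtracting the already-established volume identity forces the remaining boundary integral (the one encoding $\curl\bb w$ against the tangential trace of $\bb v$) to vanish for all admissible tangential traces, which is exactly $\N\bb w=0$ in the trace sense.

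**The estimate and the obstacle.** The inequality $\|\bb w\|_{\htwo}\leq C\|\bb f\|_{\ld}$ is part of the conclusion of Theorem IV.9.6 of \cite{Boyer}; alternatively, it follows by combining the $\H$-coercivity (giving $\|\bb w\|_{\hom}\leq C\|\bb f\|_{\ld}$ by testing with $\bb v=\bb w$ and using Cauchy–Schwarz) with the second-order elliptic estimate for $-\Delta$ under these boundary conditions, with $C$ depending only on $\Omega$ and $d$ through the constants in the cited regularity theory. I expect the genuinely delicate step to be the rigorous identification of the natural boundary condition $\N\bb w=0$ in the trace sense, since the operator $\N$ has the two different forms for $d=2$ and $d=3$, and one must be careful that the boundary integral arising from integrating $\io\curl\bb w\cdot\curl\bb v\,dx$ by parts genuinely pairs $\N\bb w$ against the full space of admissible tangential boundary traces of test functions $\bb v\in\H$. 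Verifying that these traces are rich enough to force $\N\bb w=0$ (rather than some weaker averaged condition) is where the structural role of the space $\H$ and the geometry of $\partial\Omega$ enter, and it is the point I would treat most carefully; the remaining computations are routine applications of the Green-type identities for $\div$ and $\curl$.
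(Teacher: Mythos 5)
Your proposal follows essentially the same route as the paper: Lax--Milgram with the coercivity of $B$ on $\H$ (via Theorem IV.4.8 of \cite{Boyer} and Remark 3.5 of Chapter I in \cite{G-R}), followed by the $H^2$-regularity and estimate from Theorem IV.9.6 of \cite{Boyer}, with the $d=2$ case handled by analogy. The only difference is that you spell out the weak-to-strong passage and the identification of the natural boundary condition $\N\bb w=0$ in the trace sense, which the paper leaves implicit in the remark that the Lax--Milgram identity ``is a weak formulation'' of \eqref{veclap}; this added care is correct and welcome but does not change the argument.
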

Let us note that the coercivity of the bilinear form in \eqref{scprod} entails that the norm $\|\cdot\|_{\H}$ from \eqref{defnorm} is equivalent to the standard norm on $\hom$.
 
Now, we can apply the standard theory of symmetric and compact operators in Hilbert spaces (see \cite[Theorem 7 of Appendix D]{Evans}) to the operator solving problem \eqref{veclap}. As an inference, we get the existence of an orthogonal basis (with $B$ defined in \eqref{scprod} as scalar product) $\{\bs\phi_k\}$ of $\H$ consisting of eigenfunctions as in \eqref{probeig}, which in addition is an orthonormal basis in $\ld$. Let us denote the eigenvalues of \eqref{probeig} by $\xi_k$. Because $\bs\phi_k$ satisfy $-\Delta\bs\phi_k=\xi_k\bs\phi_k$ in the interior of $\Omega$, due to the standard results for the regularity of solutions of elliptic equations (see \cite{Boyer, Evans}), we get that $\bs\phi_k\in C^{\infty}(\Omega)$. However, Theorem \ref{prexpr} says that $\bs\phi_k\in\htwo$ for all $k\in\n$. Albeit applying similar methods to those in \cite{Boyer}, it can be shown that the eigenfunctions are arbitrarily well-regular as long as the boundary's regularity allows it.

\section{A priori estimates}\label{druga}
In this section, we derive estimates that will be used to prove the existence of global solutions in the next section. Let us remind the functional 
\begin{align*}
\F&(\bu,\bu_t,\theta):=\\
&\ul\left(\io (\div \bu_t)^2dx+\io |\curl \bu_t|^2dx+(\wspnd)\io|\nabla\div \bu|^2dx+\mu\io|\curl\curl\bu|^2dx+\io\frac{|\nabla\theta|^2}{\theta}dx\right).
\end{align*}
It plays a crucial role in estimates. It consists of the Fisher information and higher-order derivatives of the displacement. Actually, this is a version of the functional that has been adopted to address our issue. Similar ones can be found in \cite{BC, Biest, BC2}. We start proving the inequality in the following lemma.
\begin{lem}\label{wonderfulformula}
Let $\bu$ and $\theta$ be smooth solutions of \eqref{system} such that $\theta>0$. Then, the following inequality
\begin{align*}
\ddt\F(\bu,\bu_t,\theta)
\leq-\io\theta|\nabla^2\log\theta|^2dx-\frac\nu2\io\frac{|\nabla\theta|^2}\theta\div \bu_tdx
\end{align*}
holds.
\end{lem}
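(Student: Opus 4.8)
The plan is to differentiate $\F$ in time and split it as $\F=\F_1+\F_2$, where $\F_1$ collects the four displacement terms and $\F_2=\ul\io\frac{|\nabla\theta|^2}{\theta}\,dx$ is the Fisher information. I would treat the two pieces separately and show that the coupling terms they produce cancel, leaving exactly the claimed right-hand side. Throughout I write $\tau=\log\theta$, so that $\frac{|\nabla\theta|^2}{\theta}=\theta|\nabla\tau|^2$ and $\partial\tau/\partial\bb n=\theta^{-1}\partial\theta/\partial\bb n=0$ on $\partial\Omega$ by the Neumann condition.

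For $\F_1$, I would first apply $\div$ and $\curl$ to the momentum equation. Since $\div\curl\curl\bu=0$ and $\curl\nabla=0$, the scalar $\div\bu$ and the field $\curl\bu$ satisfy the decoupled wave equations
\begin{align*}
\div\bu_{tt}-(\wspnd)\Delta\div\bu=-\nu\Delta\theta,\qquad \curl\bu_{tt}=\mu\Delta\curl\bu .
\end{align*}
Differentiating $\F_1$ and inserting these identities, the terms built from $\curl\bu$ cancel (the curl part carries a conserved energy, since it solves the homogeneous wave equation), while the divergence terms combine, modulo boundary integrals, into the single coupling contribution $-\nu\io\div\bu_t\,\Delta\theta\,dx$.

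For $\F_2$, I would differentiate $\ul\io\theta|\nabla\tau|^2\,dx$ and use the entropy equation \eqref{eqtau}. After integrating by parts (the boundary term vanishing because $\partial\tau/\partial\bb n=0$) and invoking the Bochner identity $\ul\Delta|\nabla\tau|^2=|\nabla^2\tau|^2+\nabla\tau\cdot\nabla\Delta\tau$, one is led to
\begin{align*}
\ddt\F_2\le -\io\theta|\nabla^2\log\theta|^2\,dx+\nu\io\Delta\theta\,\div\bu_t\,dx-\frac\nu2\io\frac{|\nabla\theta|^2}{\theta}\,\div\bu_t\,dx .
\end{align*}
The inequality arises from discarding the boundary term $\ul\int_{\partial\Omega}\theta\,\partial|\nabla\tau|^2/\partial\bb n$, which is nonpositive by Lemma \ref{Evansentr} together with $\theta>0$ and the convexity of $\Omega$; this is precisely where the convexity hypothesis enters, and the bookkeeping of the purely heat-driven terms is what fixes the coefficient $1$ in front of $\io\theta|\nabla^2\log\theta|^2\,dx$.

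Adding the two estimates, the coupling contributions $-\nu\io\div\bu_t\,\Delta\theta\,dx$ from $\F_1$ and $+\nu\io\Delta\theta\,\div\bu_t\,dx$ from $\F_2$ cancel identically, which yields the assertion. I expect the main obstacle to be the treatment of the boundary integrals generated in the $\F_1$ computation: the integrations by parts leave boundary terms (involving, for instance, $\partial\div\bu/\partial\bb n$ and the tangential traces of $\curl\bu$), and verifying that all of them vanish is exactly the point at which the new boundary conditions $\bu\cdot\bb n=0$ and $\N\bu=0$ are indispensable. By contrast, the Fisher part is the standard entropy-dissipation computation, with Lemma \ref{Evansentr} supplying the sign of the single surviving boundary contribution.
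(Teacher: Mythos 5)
Your strategy coincides with the paper's: split $\F$ into the displacement energy and the Fisher information, show that the former produces exactly $-\nu\io\Delta\theta\div\bu_t\,dx$ and the latter produces $-\io\theta|\nabla^2\log\theta|^2dx-\frac\nu2\io\frac{|\nabla\theta|^2}{\theta}\div\bu_t\,dx+\nu\io\Delta\theta\div\bu_t\,dx$, and cancel the coupling terms. Your Fisher-information computation is a correct variant of the paper's: the paper works with $\theta^{\sul}$ (multiplying the equation for $(\theta^{\sul})_t$ by $-4\Delta\theta^{\sul}$ and using the pointwise identity that turns $-4|\nabla^2\theta^{\sul}|^2$ plus the cross terms into $-4\theta|\nabla^2\log\theta^{\sul}|^2$), while you work directly with $\tau=\log\theta$; both reduce the sign question to a single boundary term (yours is $\ul\int_{\partial\Omega}\theta\,\partial_{\bb n}|\nabla\tau|^2$, the paper's is $2\int_{\partial\Omega}\partial_{\bb n}|\nabla\theta^{\sul}|^2$), which Lemma \ref{Evansentr} and the convexity of $\Omega$ make nonpositive. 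I checked that your version closes: expanding $\ddt\,\ul\io\theta|\nabla\tau|^2dx$ with \eqref{eqtau} and the Bochner identity yields exactly $-\io\theta|\nabla^2\tau|^2dx+\ul\int_{\partial\Omega}\theta\,\partial_{\bb n}|\nabla\tau|^2$ plus the two coupling terms you state.

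The one place where you have left real work undone is the displacement half, and it is not purely routine. Taking $\div$ of the momentum equation and computing $\ddt\bigl(\ul\io(\div\bu_t)^2dx+\frac{\wspnd}2\io|\nabla\div\bu|^2dx\bigr)$ leaves, after the bulk cancellation, the boundary term $(\wspnd)\int_{\partial\Omega}\div\bu_t\,\partial_{\bb n}\div\bu$. Neither $\bu\cdot\bb n=0$ nor $\N\bu=0$ gives $\partial_{\bb n}\div\bu=0$ directly; you must take the normal trace of the momentum equation on $\partial\Omega$ and use that $\bu_{tt}\cdot\bb n=0$ (time-differentiate $\bu\cdot\bb n=0$), that $\nabla\theta\cdot\bb n=0$, and that $\curl\curl\bu\cdot\bb n=0$ --- the last because the normal component of a curl depends only on the tangential trace of the field, which vanishes here since $\N\bu=0$. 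The paper sidesteps this by testing the vector equation with $-\Delta\bu_t=\curl\curl\bu_t-\nabla\div\bu_t$ and arranging every integration by parts so that each boundary term carries a factor of $\bu_{tt}\cdot\bb n$, $\curl\bu_t\times\bb n$, or $\nabla\theta\cdot\bb n$, all of which vanish from the imposed conditions alone. Either route works, but yours needs the extra derived boundary identity spelled out; as written, ``verifying that all of them vanish'' is precisely the content of the lemma in this setting and cannot be deferred.
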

\begin{proof}
Direct calculations based on \eqref{system} yield
\begin{align*}
(\theta^{\sul})_t=\Delta\theta^{\sul}+\frac14\frac{|\nabla\theta|^2}{\theta^{\sfrac32}}-\frac{\nu}2\theta^{\sul}\div \bu_t.
\end{align*}
We multiply the above equation by $-4\Delta\theta^{\sul}$. After integration, we obtain that the left-hand side is equal to
\begin{align*}
-4\io(\theta^{\sul})_t\Delta\theta^{\sul}dx=4\io\nabla\theta^{\sul}\cdot\nabla(\theta^{\sul})_tdx=2\ddt\io |\nabla\theta^{\sul}|^2dx=\ul\ddt\io\frac{|\nabla\theta|^2}{\theta}dx.
\end{align*}
Let us deal with the right-hand side. We obtain
\begin{align}\label{surow1}
\ul\ddt\io\frac{|\nabla\theta|^2}{\theta}dx&=-4\io(\Delta \theta^{\sul})^2dx-\io\Delta\theta^{\sul}\frac{|\nabla\theta|^2}{\theta^{\frac32}}dx+2\mu\io\Delta\theta^{\sul}\theta^{\sul}\div \bb{u}_tdx \nonumber\\
&= I_1+I_2+I_3.
\end{align}

First, we calculate $I_1$. The integration by parts yield
\begin{align*}
I_1=4\io\Delta \nabla\theta^{\sul}\cdot\nabla\theta^{\sul}dx.
\end{align*}
We apply the Bochner formula $\Delta \nabla\theta^{\sul}\cdot\nabla\theta^{\sul}=\ul\Delta |\nabla\theta^{\sul}|^2-|\nabla^2\theta^{\sul}|^2$ and arrive at
\begin{align*}
I_1=-4\io|\nabla^2\theta^{\sul}|^2dx+2\io\Delta |\nabla\theta^{\sul}|^2dx.
\end{align*}
Lemma \ref{Evansentr} entails
\begin{align}\label{surow2}
I_1\leq -4\io|\nabla^2\theta^{\sul}|^2dx
\end{align}

We integrate by parts in $I_2$ and obtain
\begin{align*}
I_2=4\io\nabla\theta^{\sul}\cdot\nabla\frac{|\nabla\theta^{\sul}|^2}{\theta^{\sul}}dx=8\io\frac{\nabla\theta^{\sul} \cdot \nabla^2\theta^{\sul}\cdot(\nabla\theta^{\sul})^T}{\theta^{\sul}}dx-4\io\frac{|\nabla\theta^{\sul}|^4}{\theta}dx
\end{align*}
In $I_3$ we get
\begin{align}\label{surow4}
I_3=\nu\io\Delta\theta\div \bu_tdx-\frac{\nu}2\io\frac{|\nabla\theta|^2}{\theta}\div \bu_t dx.
\end{align}
Let us note that
\begin{align}\label{surow5}
-4\io|\nabla^2\theta^{\sul}|^2dx+I_2&=-4\io\sum_{k,l=1}^n\left((\theta^{\sul})_{x_kx_l}-\frac{(\theta^{\sul})_{x_k}(\theta^{\sul})_{x_l}}{\theta^{\sul}}\right)^2dx\nonumber\\
&=-4\io\theta|\nabla^2\log\theta^{\sul}|^2dx
\end{align}
Plugging \eqref{surow2}, \eqref{surow4} and \eqref{surow5} into \eqref{surow1}, we get
\begin{align}\label{wfe}
\ul\ddt\io\frac{|\nabla\theta|^2}{\theta}dx\leq-\io\theta|D^2\log\theta|^2dx-\frac\nu2\io\frac{|\nabla\theta|^2}\theta\div \bu_tdx+\nu\io\Delta\theta\div \bu_tdx.
\end{align}

Let us remind that for a vector field $\bb w\colon\Omega\to\rn$ we have
\begin{align}\label{identity}
\Delta \bb w=-\curl\curl \bb w+\nabla\div \bb w.
\end{align}
We multiply the wave equation in \eqref{system} by $-\Delta \bu_t$ and integrate over $\Omega$. Utilizing the above formula and integrating by parts, we obtain
\begin{align*}
\ul\ddt&\left(\io(\div \bu_t)^2dx+\io|\curl \bu_t|^2dx+(\wspnd)\io|\nabla\div\bu|^2dx+\lambda\io|\curl\curl\bu|^2dx\right)\\
&=\nu\io\nabla\theta\cdot\Delta \bu_t dx=\nu\io\nabla\theta\cdot\nabla\div \bu_t dx-\nu\io\nabla\theta\cdot\curl\curl\bu_t=-\nu\io\Delta\theta\div\bu_tdx.
\end{align*}
The integral $-\nu\io\nabla\theta\cdot\curl\curl\bu_tdx$ disappears after integration by parts and the utility of boundary conditions of $\bb u_t$. We add the above equality to \eqref{wfe}. The claim follows.
\end{proof}
In the lemma below, we continue to estimate $\F$.
\begin{lem}
Let $\bu$ and $\theta$ be smooth solutions of \eqref{system} such that $\theta>0$. Then, the following inequality is satisfied
\begin{align}\label{Ffestin}
\ddt\F(\bb u,\bu_t, \theta) &\leq -C_1 \|\nabla^2\theta^{\sul}\|_{\ld}^2 + C_2 \F^{\sul}(\bb u,\bu_t, \theta) \|\nabla \theta^{\sul}\|_{L^4(\Omega)}^2,
\end{align}
where $C_1, C_2 > 0$ only depend on $d$, $|\nu|$ and $\Omega$.
\end{lem}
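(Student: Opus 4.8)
The plan is to start directly from the inequality proved in Lemma \ref{wonderfulformula}, namely
\[
\ddt\F(\bu,\bu_t,\theta)\leq-\io\theta|\nabla^2\log\theta|^2dx-\frac\nu2\io\frac{|\nabla\theta|^2}\theta\div \bu_tdx,
\]
and to control each of the two terms on the right-hand side separately: the first term will supply the good dissipative quantity $-C_1\|\nabla^2\theta^{\sul}\|_{\ld}^2$, while the second term, which has an indefinite sign, will be absorbed into the error term $C_2\F^{\sul}\|\nabla\theta^{\sul}\|_{L^4(\Omega)}^2$. Since $\bu,\theta$ are smooth and $\theta>0$ with $\nabla\theta\cdot\bb n=0$ on $\partial\Omega$, both hypotheses of Lemma \ref{theCin} are met with $w=\theta$, so the cited inequalities apply.

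For the first term, I would apply Lemma \ref{theCin} with $w=\theta$, which gives $\io|\nabla^2\theta^{\sul}|^2dx\leq C\io\theta|\nabla^2\log\theta|^2dx$ with $C=1+\frac{\sqrt d}2+\frac d8$. Rearranging yields
\[
-\io\theta|\nabla^2\log\theta|^2dx\leq-\frac1C\|\nabla^2\theta^{\sul}\|_{\ld}^2,
\]
so that $C_1:=1/C$ depends only on $d$ (and trivially on $\Omega$ through the applicability of the lemma). This handles the dissipative part exactly as required.

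For the second term I would first record the pointwise identity $\frac{|\nabla\theta|^2}\theta=4|\nabla\theta^{\sul}|^2$, coming from $\nabla\theta^{\sul}=\tfrac12\theta^{-\sul}\nabla\theta$. Taking absolute values and using this identity gives $\bigl|\frac\nu2\io\frac{|\nabla\theta|^2}\theta\div\bu_tdx\bigr|\leq 2|\nu|\io|\nabla\theta^{\sul}|^2|\div\bu_t|dx$. I would then apply Hölder's inequality with exponents $2$ and $2$, treating $|\nabla\theta^{\sul}|^2$ and $|\div\bu_t|$ as the two factors, to obtain the bound $2|\nu|\,\|\nabla\theta^{\sul}\|_{L^4(\Omega)}^2\,\|\div\bu_t\|_{\ld}$. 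Finally, reading off from the definition of $\F$ that $\|\div\bu_t\|_{\ld}^2\leq 2\F$, hence $\|\div\bu_t\|_{\ld}\leq\sqrt2\,\F^{\sul}$, produces the desired form with $C_2:=2\sqrt2\,|\nu|$. Adding the two estimates gives \eqref{Ffestin}.

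I do not expect a genuine obstacle here: the entire argument is a combination of the cited functional inequality in Lemma \ref{theCin}, the elementary rewriting $\frac{|\nabla\theta|^2}\theta=4|\nabla\theta^{\sul}|^2$, Hölder's inequality, and the trivial domination of $\|\div\bu_t\|_{\ld}$ by $\F^{\sul}$. The only point requiring a moment's care is the constant bookkeeping, in particular the factor $4$ produced when converting the Fisher-type density $\frac{|\nabla\theta|^2}\theta$ into $|\nabla\theta^{\sul}|^2$ and the factor $\sqrt2$ from the $\tfrac12$ in front of $\F$; getting these right is what fixes the explicit values of $C_1$ and $C_2$ and confirms that they depend only on $d$, $|\nu|$, and $\Omega$.
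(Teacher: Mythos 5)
Your proposal is correct and follows essentially the same route as the paper: both start from Lemma \ref{wonderfulformula}, apply Lemma \ref{theCin} to convert the weighted Hessian-of-log term into $-C_1\|\nabla^2\theta^{\sul}\|_{\ld}^2$, and bound the coupling term by H\"older's inequality together with the observation that $\|\div\bu_t\|_{\ld}\leq\sqrt2\,\F^{\sul}$. Your version is in fact slightly more careful with the constant bookkeeping (the factor $4$ from $\frac{|\nabla\theta|^2}{\theta}=4|\nabla\theta^{\sul}|^2$) than the paper's own write-up.
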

\begin{proof}
By Lemma~\ref{wonderfulformula}, we get
\begin{align*}%\label{in4estsol}
\ddt \F(\bb u,\bu_t, \theta) \leq-\io\theta|\nabla^2\log\theta^{\sul}|^2dx-\frac\mu2\io\frac{|\nabla\theta|^2}\theta\div \bb{u}_tdx =I_1 + I_2.
\end{align*}
The term $I_1$ can be estimated by Lemma~\ref{theCin}. It gives
\begin{align*}%\label{in3estsol}
I_1\leq -C_1\|\nabla^2\theta^{\sul}\|_{\ld}^2.
\end{align*}
On the other hand, the Hölder inequality yields
\begin{align*}
I_2&\leq   C_2\|\nabla\theta^{\sul}\|_{L^4(\Omega)}^2\|\div \bb{u}_t\|_{\ld}\leq    C_2 \F^{\sul}(\bb u,\bu_t, \theta) \|\nabla\theta^{\sul}\|_{L^4(\Omega)}^2.		
\end{align*}
\end{proof}

In the next lemma, we present one of the main components in the proof of the existence of solutions. The constant $D$ from the result below will be fundamental in the rest of the paper. 
\begin{lem}
Let us assume that $\bu$ and $\theta$ are smooth solutions of \eqref{system} such that $\theta>0$. Then,
\begin{align*}
\ddt\F\leq C\|\nabla^2\theta^{\sul}\|_{\ld}^2(\F-D),
\end{align*}
where $C$ and $D$ depend on $d$, $|\nu|$ and $\Omega$.
\end{lem}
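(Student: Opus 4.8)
The plan is to start from the differential inequality \eqref{Ffestin} proved above and to absorb the $L^4$-norm of $\nabla\theta^{\sul}$ into the dissipative term $\|\nabla^2\theta^{\sul}\|_{\ld}^2$. Write $v:=\theta^{\sul}$. Since $\theta$ is a smooth positive solution, $v$ is smooth and positive, and the Neumann condition $\nabla\theta\cdot\bb n=0$ on $\partial\Omega$ gives $\frac{\partial v}{\partial\bb n}=\ul\theta^{-\sul}\nabla\theta\cdot\bb n=0$ on $\partial\Omega$, so Lemma \ref{inpion} is applicable to $v$.

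The key estimate I would establish is
\[
\|\nabla v\|_{L^4(\Omega)}^2\leq C_3\|\nabla^2 v\|_{\ld}^2,
\]
with $C_3$ depending only on $d$ and $\Omega$. For this I would apply the Gagliardo--Nirenberg inequality on the bounded domain $\Omega$ to the function $\nabla v$: for $d\in\{2,3\}$ it yields $\|\nabla v\|_{L^4(\Omega)}\leq C\|\nabla^2 v\|_{\ld}^{a}\|\nabla v\|_{\ld}^{1-a}+C\|\nabla v\|_{\ld}$, with $a=\ul$ when $d=2$ and $a=\frac34$ when $d=3$ (in both cases $a\leq 1$, which is where the restriction $d\leq 3$ enters). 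Combining Lemma \ref{inpion} with the pointwise bound $|\Delta v|\leq\sqrt d\,|\nabla^2 v|$ gives $\|\nabla v\|_{\ld}\leq C\|\Delta v\|_{\ld}\leq C\sqrt d\,\|\nabla^2 v\|_{\ld}$; substituting this into the right-hand side collapses both terms to a multiple of $\|\nabla^2 v\|_{\ld}$, and squaring gives the claim. This interpolation step, which crucially uses the Neumann boundary condition (through Lemma \ref{inpion}) and the dimensional restriction, is the main obstacle.

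Feeding the key estimate into \eqref{Ffestin} produces
\[
\ddt\F\leq\|\nabla^2\theta^{\sul}\|_{\ld}^2\left(C_2C_3\,\F^{\sul}-C_1\right).
\]
It then remains to recast the bracket into the asserted form. Put $c:=C_1/(C_2C_3)$ and $D:=c^2$. Since $\frac1{2c}(s^2-c^2)-(s-c)=\frac{(s-c)^2}{2c}\geq0$ for every $s\geq0$, we have $s-c\leq\frac1{2c}(s^2-c^2)$; applying this with $s=\F^{\sul}$ gives $\F^{\sul}-c\leq\frac1{2c}(\F-D)$. Multiplying through by the nonnegative quantity $C_2C_3\|\nabla^2\theta^{\sul}\|_{\ld}^2$ yields $\ddt\F\leq C\|\nabla^2\theta^{\sul}\|_{\ld}^2(\F-D)$ with $C=C_2C_3/(2c)=(C_2C_3)^2/(2C_1)$ and $D=C_1^2/(C_2C_3)^2$, both depending only on $d$, $|\nu|$ and $\Omega$, as required.
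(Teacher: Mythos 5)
Your proposal is correct and follows essentially the same route as the paper: both absorb $\|\nabla\theta^{\sul}\|_{L^4(\Omega)}^2$ into $\|\nabla^2\theta^{\sul}\|_{\ld}^2$ via the Gagliardo--Nirenberg inequality combined with Lemma \ref{inpion}, and then convert the resulting $\F^{\sul}$ into $\F$. The only (cosmetic) difference is the last step, where you use the exact identity $s-c\leq\frac{1}{2c}(s^2-c^2)$ in place of the paper's Young-inequality argument with $\epsilon=C_1/2$; both yield admissible constants $C$ and $D$ depending only on $d$, $|\nu|$ and $\Omega$.
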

\begin{proof}
We shall estimate the norm $\|\nabla\theta^{\sul}\|_{L^4(\Omega)}^2$ in \eqref{Ffestin}. We use the Gagliardo-Nirenberg inequality and the Young inequality to obtain
\begin{align*}
\|\nabla\theta^{\sul}\|^2_{L^4(\Omega)}\leq C_2(\|\nabla^2\theta^{\sul}\|_{\ld}^2+\|\nabla\theta^{\sul}\|_{\ld}^2).
\end{align*}
Now, we utilize Lemma \ref{inpion}. It gives us
\begin{align*}
\|\nabla\theta^{\sul}\|^2_{L^4(\Omega)}\leq C_2(\|\nabla^2\theta^{\sul}\|_{\ld}^2+\|\Delta\theta^{\sul}\|_{\ld}^2)\leq C_2 \|\nabla^2\theta^{\sul}\|_{\ld}^2
\end{align*}
We plug the above inequality into \eqref{Ffestin}, and obtain
\begin{align*}
\ddt\F\leq -C_1\|\nabla^2\theta^{\sul}\|_{\ld}^2+C_2\|\nabla^2\theta^{\sul}\|_{\ld}^2\F^{\sul}.
\end{align*}
We utilize the Young inequality and arrive at
\begin{align*}
\ddt\F&\leq (\epsilon-C_1)\|\nabla^2\theta^{\sul}\|_{\ld}^2+\frac{C_2}{\epsilon}\|\nabla^2\theta^{\sul}\|_{\ld}^2\F\\
&\leq (\epsilon-C_1)\|\nabla^2\theta^{\sul}\|_{\ld}^2+\frac{C_2}{\epsilon}\|\nabla^2\theta^{\sul}\|_{\ld}^2\F.
\end{align*}
Let us take $\epsilon:=\frac{C_1}2$. We obtain the assertion with $C=\frac{2C_2}{C_1}$ and $D=\frac{C_1^2}{4C_2}$.
\end{proof}

As an immediate consequence of the previous lemma, we obtain the following theorem. It provides the global estimates of $\F$ for solutions with initial values smaller than $D$.
\begin{tw}\label{estsol}
Let us assume that $\bu$ and $\theta$ are smooth solutions of \eqref{system} which satisfy $\theta>0$. There exists a constant $D=D(d,|\nu|,\Omega)$ such that if
\begin{align*}
\initcond,
\end{align*}
then
\begin{align*}
\F(\bu,\bu_t,\theta)\leq  \F(\bu_0,\bb v_0,\theta_0)
\end{align*}
for all $t\geq 0$.
\end{tw}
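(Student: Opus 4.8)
The plan is to read this statement off as a soft corollary of the preceding differential inequality by means of a continuation argument, the decisive point being that the hypothesis places the initial value of $\F$ \emph{strictly} below the threshold $D$. First I would record the bookkeeping identity
\begin{align*}
&\|\bb v_0\|^2_{\H}+(\wspnd)\|\nabla\div\bb u_0\|^2_{\ld}+\mu\|\curl\curl\bb u_0\|_{\ld}^2+\left\|\frac{\nabla\theta_0}{\theta^{\sul}_0}\right\|^2_{\ld}\\
&\qquad=2\,\F(\bb u_0,\bb v_0,\theta_0),
\end{align*}
which is immediate from the definition of $\|\cdot\|_{\H}$ in \eqref{defnorm} together with the initial conditions $\bu_t(0,\cdot)=\bb v_0$, $\bu(0,\cdot)=\bb u_0$, $\theta(0,\cdot)=\theta_0$. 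Hence the hypothesis yields $\F(\bb u_0,\bb v_0,\theta_0)\le D/2<D$, and this strict inequality is exactly the slack the argument needs.

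The engine is the inequality $\ddt\F\le C\|\nabla^2\theta^{\sul}\|_{\ld}^2(\F-D)$ from the previous lemma. Since $C>0$ and $\|\nabla^2\theta^{\sul}\|_{\ld}^2\ge0$, its right-hand side is nonpositive at every instant at which $\F\le D$; thus whenever $\F$ sits below $D$ it can only decrease, a self-reinforcing mechanism. To globalize this I would set
\begin{align*}
T^*:=\sup\left\{T\ge0:\ \F(\bb u,\bu_t,\theta)(t)\le D\ \text{for all }t\in[0,T]\right\}.
\end{align*}
For smooth solutions $t\mapsto\F(t)$ is continuous, and since $\F(0)<D$ we obtain $T^*>0$. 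On $[0,T^*)$ we have $\F\le D$, so $\ddt\F\le0$ and $\F$ is nonincreasing there, whence $\F(t)\le\F(0)\le D/2<D$. Were $T^*$ finite, continuity would force $\F(T^*)\le D/2<D$, and then $\F<D$ would persist on a neighborhood of $T^*$, contradicting the definition of $T^*$ as a supremum. Therefore $T^*=\infty$, so $\ddt\F\le0$ holds for all $t\ge0$ and $\F(\bb u,\bu_t,\theta)(t)\le\F(\bb u_0,\bb v_0,\theta_0)$ follows by monotonicity.

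I expect no serious obstacle here: the hard analysis---absorbing the cross term $\io\frac{|\nabla\theta|^2}{\theta}\div\bu_t\,dx$ into the clean coefficient $(\F-D)$ via Lemma \ref{theCin}, Lemma \ref{inpion}, Gagliardo-Nirenberg and Young---was already carried out in the lemmas leading up to this theorem. The only items demanding care are the factor-of-two bookkeeping that guarantees the strict start $\F(0)<D$ (without it the continuation cannot be initiated) and the continuity of $\F$ in time, which is immediate for the smooth solutions under consideration. As an alternative to the continuation argument one could integrate directly: writing $G:=\F-D$, the inequality reads $\ddt G\le C\|\nabla^2\theta^{\sul}\|_{\ld}^2\,G$, so $G(t)\le G(0)\exp\!\big(C\int_0^t\|\nabla^2\theta^{\sul}\|_{\ld}^2\,ds\big)\le G(0)<0$, because the exponential factor is $\ge1$ and $G(0)<0$, giving $\F(t)\le\F(0)$ at once.
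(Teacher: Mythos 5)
Your proof is correct and is essentially the argument the paper intends: the theorem is stated there as an immediate consequence of the differential inequality $\ddt\F\leq C\|\nabla^2\theta^{\sul}\|_{\ld}^2(\F-D)$, and your bookkeeping identity showing the hypothesis means $\F(\bu_0,\bb v_0,\theta_0)\leq D/2$ together with either the continuation argument or the direct Gronwall integration fills in exactly what the paper leaves implicit. Note that since the differential inequality holds unconditionally for smooth positive-temperature solutions, your closing one-line Gronwall argument already suffices and the continuation step is not strictly needed.
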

Let us notice that the right-hand side of the inequality in the above theorem is finite because the bound from below of $\theta_0$ is positive. We will need a global estimate for the temperature in $L^{\infty}$ to complete our estimates. This will be inferred as a consequence of the following general lemma. We will prove it utilizing the Moser-Alikakos iteration (see \cite{Moser, Ali}). We already applied this method in the context of thermoelasticity in \cite{BC2, Biest}. However, it had been earlier used for 1d thermoelasticity in \cite{DafHsi}.
\begin{lem}\label{lemmos}
Let us assume that $\eta$ is a smooth function defined on $\ot$ and $f\in L^{\infty}(0,\infty;\ld)$. Moreover, let us assume that $\eta$ satisfies
\begin{equation}\label{lemmosas}
\eta_t-\Delta\eta\leq (\eta^++1)f\qquad\textrm{for almost all $(t,x)\in (0,\infty)\times\Omega$}\footnote{The function $\eta^+$ means the nonnegative part of $\eta$ i.e. $\eta^+:=\max\{\eta,0\}$.}
\end{equation}
and 
\begin{align*}
\frac{\partial\eta}{\partial\bb n}=0\textrm{ on }\partial\Omega.
\end{align*} 
Then, there exists a constant $C>0$, dependent on $d$, $|\nu|$, $\Omega$, $\|f\|_{L^{\infty}(0,T; L^1(\Omega))}$, $\|\eta^+(0)\|_{L^{\infty}(\Omega)}$, and $\|\eta^+\|_{L^{\infty}(0,\infty;L^1(\tn))}$ such that
\begin{align*}
\|\eta^+\|_{L^{\infty}((0,\infty)\times\Omega)}\leq C.
\end{align*}
\end{lem}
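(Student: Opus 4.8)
The plan is to run a Moser--Alikakos iteration on the nonnegative part $w:=\eta^+$, testing the differential inequality against powers $w^{p-1}$ and carefully tracking how the constants that appear depend on the exponent $p$. First I would fix $p\ge 2$, multiply \eqref{lemmosas} by $p\,w^{p-1}\ge 0$, and integrate over $\Omega$. Since $w^{p-1}$ vanishes on $\{\eta\le 0\}$ and equals $\eta^{p-1}$ on $\{\eta>0\}$, the parabolic term telescopes to $p\io \eta_t w^{p-1}\,dx=\ddt\io w^p\,dx$ (legitimate for smooth $\eta$ after approximating $s\mapsto(s^+)^p$, which is $C^1$ for $p\ge 2$). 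Integration by parts converts the diffusion term into $\tfrac{4(p-1)}{p}\io|\nabla w^{p/2}|^2\,dx$, the boundary integral dropping out because $\partial_{\bb{n}}\eta=0$. Writing $\eta^++1=w+1$, bounding the right-hand side by its positive part $(w+1)f^+$, and using the elementary inequality $w^{p-1}\le w^p+1$, I reach
\[
\ddt\io w^p\,dx+\frac{4(p-1)}{p}\io|\nabla w^{p/2}|^2\,dx\le 2p\io f^+w^p\,dx+p\|f\|_{L^1(\Omega)}.
\]

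Next I would absorb the source term. Setting $v:=w^{p/2}$, Hölder and the Gagliardo--Nirenberg inequality (valid for $d\in\{2,3\}$ with interpolation exponent $\theta=d/4<1$) give
\[
\io f^+w^p\,dx\le \|f\|_{\ld}\,\|v\|_{L^4(\Omega)}^2\le C\|f\|_{\ld}\,\|\nabla v\|_{\ld}^{2\theta}\|v\|_{\ld}^{2(1-\theta)}+C\|f\|_{\ld}\|v\|_{\ld}^2,
\]
so that Young's inequality splits off a term $\epsilon\|\nabla v\|_{\ld}^2$, absorbable into the diffusion term once $\epsilon\sim 1/p$, at the cost of a prefactor that is only polynomial in $p$. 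Keeping a fixed fraction of the diffusion term and invoking the further Gagliardo--Nirenberg--Young interpolation $\|v\|_{\ld}^2\le \tfrac{p-1}{p}\|\nabla v\|_{\ld}^2+C_p\big(\io w^{p/2}\,dx\big)^2$ manufactures a zeroth-order damping term $\io w^p\,dx$ on the left. Collecting everything yields, with $y_p(t):=\io w^p(t)\,dx$, a scalar differential inequality
\[
\ddt y_p(t)+y_p(t)\le K(p)\Big[\big(y_{p/2}(t)\big)^2+1\Big],
\]
where $K(p)$ grows at most polynomially in $p$ and depends on $d$, $\Omega$, $\|f\|_{L^\infty(0,\infty;\ld)}$ and $\|f\|_{L^\infty(0,\infty;L^1(\Omega))}$. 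The zeroth-order term on the left is precisely what forces the bound to be uniform in time: a Grönwall/integrating-factor argument gives $\sup_{t\ge0}y_p(t)\le y_p(0)+K(p)\big[\sup_{t\ge0}(y_{p/2}(t))^2+1\big]$.

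Finally I would iterate along $p_k=2^k$. Putting $M_k:=\sup_{t\ge0}\io w^{2^k}\,dx$ and estimating the initial slice by $y_{p_k}(0)=\io(\eta^+(0))^{2^k}\,dx\le|\Omega|\,\|\eta^+(0)\|_{L^\infty(\Omega)}^{2^k}$, the estimate above becomes the Alikakos recursion
\[
M_k\le |\Omega|\,\|\eta^+(0)\|_{L^\infty(\Omega)}^{2^k}+K(2^k)\big(M_{k-1}^2+1\big),\qquad M_0=\|\eta^+\|_{L^\infty(0,\infty;L^1(\Omega))}.
\]
Since $K(2^k)\le C\,b^{k}$ for some $b>1$ (polynomial growth in $p=2^k$), the standard analysis — taking logarithms, dividing by $2^k$, and summing the convergent series $\sum_k k\,2^{-k}$ — yields $\limsup_{k\to\infty}M_k^{1/2^k}<\infty$. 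As $\sup_t\|w(t)\|_{L^\infty(\Omega)}=\sup_t\lim_k\|w(t)\|_{L^{2^k}(\Omega)}\le\limsup_k M_k^{1/2^k}$, this produces the claimed bound $\|\eta^+\|_{L^\infty((0,\infty)\times\Omega)}\le C$ with $C$ depending only on the listed quantities.

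The main obstacle is the bookkeeping of the $p$-dependence: the whole scheme closes only because every Gagliardo--Nirenberg/Young step contributes a constant that grows polynomially in $p$ (so that $K(2^k)^{1/2^k}\to1$) and because the diffusion coefficient satisfies $\tfrac{4(p-1)}{p}\ge 2$ uniformly for $p\ge 2$, which is exactly what lets one absorb the gradient contributions \emph{and} retain a zeroth-order damping of $p$-independent strength. A secondary technical point, settled by the assumed smoothness of $\eta$ together with the Neumann condition, is the justification of both the time-differentiation of $\io(\eta^+)^p\,dx$ and the admissibility of $w^{p-1}$ as a test function.
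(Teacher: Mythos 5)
Your proposal is correct and follows essentially the same route as the paper: a Moser--Alikakos iteration testing \eqref{lemmosas} against dyadic powers of $\eta^+$, using Gagliardo--Nirenberg and Young to absorb the gradient contribution and to manufacture a zeroth-order damping term, then solving the resulting differential inequality and closing the recursion because the constants grow only geometrically in the iteration index. The only deviations are cosmetic (you interpolate $L^4$ via $L^2$ and then $L^2$ via $L^1$ in two steps, where the paper interpolates $L^4$ directly against $L^1$, and you work with general $p$ before specializing to $p=2^k$).
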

\begin{proof}
Let us take $n\in\n$ and let us multiply \eqref{lemmosas} by $\left(\eta^+\right)^{2^{n+1}-1}$ and integrate over $\Omega$. After integration by parts, we obtain
\begin{align}\label{in1supth}
\frac1{2^{n+1}}\ddt\io(\eta^+)^{2^{n+1}}dx+\left(\frac1{2^{n-1}}-\frac1{4^n}\right)\io\left|\nabla\left((\eta^+)^{2^n}\right)\right|^2dx&\leq +\io \left((\eta^+)^{2^{n+1}}+(\eta^+)^{2^{n+1}-1}\right)f dx\nonumber\\
&\leq C( \|(\eta^+)^{2^n}\|_{L^4(\tn)}^2+1),
\end{align}
where in the last inequality we used the Young inequality. Now, we apply the Gagliardo-Nirenberg inequality
\begin{align*}
\|(\eta^+)^{2^n}\|_{L^4(\tn)}^2\leq C(\|\nabla\left((\eta^+)^{2^n}\right)\|_{\ld}^{2p}\|(\eta^+)^{2^n}\|_{L^1(\Omega)}^{2(1-p)}+\|(\eta^+)^{2^n}\|^2_{L^1(\Omega)}),
\end{align*}
where $p=\frac34$ for $d=2$ and $p=\frac9{10}$ for $d=3$. We apply the Young inequality to the above one and arrive at
\begin{align*}
\|(\eta^+)^{2^n}\|_{L^4(\Omega)}^2\leq C(\epsilon\|\nabla\left((\eta^+)^{2^n}\right)\|_{\ld}^{2}+\epsilon^{-\alpha}\|(\eta^+)^{2^n}\|_{L^1(\Omega)}^2),
\end{align*}
where $\alpha=3$ for $d=2$ and $\alpha=9$ for $d=3$. Let us denote $a_n:=\frac1{2^{n}}-\frac1{2\cdot4^n}$ and choose $\epsilon:=\frac{a_n}{2C}$. Plugging the above inequality into \eqref{in1supth}, we get
\begin{align*}
\frac1{2^{n+1}}\ddt\io(\eta^+)^{2^{n+1}}dx+a_n\io\left|\nabla\left((\eta^+)^{2^n}\right)\right|^2dx\leq C( a_n^{-\alpha}\|(\eta^+)^{2^n}\|_{L^1(\Omega)}^2+1).
\end{align*}
It leads us to
\begin{align*}
\ddt\io(\eta^+)^{2^{n+1}}dx+\io\left|\nabla\left((\eta^+)^{2^n}\right)\right|^2dx\leq C 2^{n+\alpha n}(\|(\eta^+)^{2^n}\|_{L^1(\Omega)}^2+1).
\end{align*}
Applying the Gagliardo-Nirenberg and the Young inequalities once more yields
\begin{align*}
\ddt \|\eta^+\|_{L^{2^{n+1}}(\Omega)}^{2^{n+1}}+C_1\|\eta^+\|_{L^{2^{n+1}}(\Omega)}^{2^{n+1}}\leq C_2 2^{n+\alpha n}(\|\eta^+\|_{L^{2^n}(\Omega)}^{2^{n+1}}+1).
\end{align*}

Solving the above differential inequality, we get
\begin{align*}
\|\eta^+\|_{L^{\infty}(0,\infty;L^{2^{n+1}}(\Omega))}^{2^{n+1}}&\leq 2^{n+\alpha n}C(\|\eta^+\|_{L^{\infty}(0,\infty;L^{2^{n}}(\Omega))}^{2^{n+1}}+1)+\|\eta^+(0)\|_{L^{2^{n+1}}(\Omega)}^{2^{n+1}}\\
&\leq C (2^{n+\alpha n}(\|\eta^+\|_{L^{\infty}(0,\infty;L^{2^{n}}(\Omega))}^{2^{n+1}}+1)+\|\eta^+(0)\|_{L^{\infty}(\Omega)}^{2^{n+1}}).
\end{align*}
Let us denote $m_n:=\max\{\|\eta^+\|_{L^{\infty}(0,\infty;L^{2^{n}}(\Omega))},\|\eta^+(0)\|_{L^{\infty}(\Omega)},1\}$. Then, the above inequality can be rewritten as follows
\begin{align*}
m_{n+1}^{2^{n+1}}\leq C(2^{n+\alpha n}+1)m_n^{2^{n+1}}.
\end{align*}
It gives us
\begin{align*}
m_{n+1}\leq 2^{\frac{n+\alpha n}{2^{n+1}}}C^{\frac1{2^{n+1}}}m_n
\end{align*}
for all $n\in\n$. Solving this recursion inequality yields
\begin{align*}
m_n\leq \prod_{k=1}^n C^{\frac1 {2^k}}2^{\frac{k\alpha}{2^k}}m_0\leq m_0C^{\sum_{k=1}^{\infty}\frac1{2^k}}2^{\sum_{k=1}^{\infty}2^{\frac{k\alpha}{2^k}}}.
\end{align*}
Because both series on the right-hand side of the above inequality are convergent, we get
\begin{align*}
\|\eta^+\|_{L^{\infty}(0,\infty;L^{2^{n+1}}(\Omega))}\leq C.
\end{align*}
Thus, we obtain
\begin{align*}
\|\eta^+\|_{L^{\infty}(0,\infty;L^{\infty}(\Omega))}\leq C.
\end{align*}
\end{proof}

Let us remind that $\|\bb w\|_{\H}=\left(\|\div\bb w\|_{\ld}^2+\|\curl\bb w\|_{\ld}^2\right)^{\sul}$.
\begin{tw}\label{upbofortemp}
Let $\bu$ and $\theta$ be smooth solutions of \eqref{system} such that $\theta>0$. Moreover, let us assume that the initial values satisfy the inequality
\begin{align*}
\initcond,
\end{align*}
where $D$ is taken from Theorem \ref{estsol}. Then,
\begin{align*}
\|\theta\|_{L^{\infty}((0,\infty)\times\tn)}\leq C,
\end{align*}
where $C=C(\Omega,d,|\nu|,\lambda,\mu,\| \bb v_0\|_{\H}, \|\bu_0\|_{\htwo}, \|\theta_0\|_{\hom}, \tilde\theta,\|\theta_0\|_{L^{\infty}(\Omega)})$.
\end{tw}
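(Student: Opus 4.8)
The plan is to apply the Moser-type Lemma \ref{lemmos} directly, with the choice $\eta := \theta$. Under the standing hypothesis $\theta > 0$ we have $\eta^+ = \theta$, and the second equation of \eqref{system} reads $\theta_t - \Delta\theta = -\nu\theta\div\bu_t$. To bring the right-hand side into the form $(\eta^+ + 1)f$ demanded by the lemma, I would estimate, using $\theta>0$,
\[
-\nu\theta\div\bu_t \leq |\nu|\,\theta\,|\div\bu_t| \leq (\theta + 1)\,|\nu|\,|\div\bu_t|,
\]
and set $f := |\nu|\,|\div\bu_t|$. The Neumann hypothesis $\partial\theta/\partial\bb{n} = 0$ on $\partial\Omega$ required by the lemma is precisely the boundary condition $\nabla\theta\cdot\bb{n} = 0$ imposed in \eqref{system}, so this part costs nothing.

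It then remains to verify the two integrability conditions that determine the constant produced by Lemma \ref{lemmos}. First, $f \in L^{\infty}(0,\infty;\ld)$: since all summands in $\F$ are nonnegative, $\|\div\bu_t\|_{\ld}^2 \leq 2\F(\bu,\bu_t,\theta)$, and the smallness assumption on the initial data activates Theorem \ref{estsol}, giving the time-uniform bound $\F(\bu,\bu_t,\theta) \leq \F(\bu_0,\bb{v}_0,\theta_0)$. Hence $\|f\|_{\ld}^2 = \nu^2\|\div\bu_t\|_{\ld}^2$ is bounded by a constant depending only on the initial data (and $|\nu|$), and a fortiori $f \in L^{\infty}(0,\infty;L^1(\Omega))$ on the bounded domain $\Omega$. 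Second, $\|\theta^+\|_{L^{\infty}(0,\infty;L^1(\Omega))} = \sup_t \io\theta\,dx$ (again using $\theta>0$): this is controlled by the energy balance \eqref{balance} of Proposition \ref{energy_cons}, in which every term on the left-hand side is nonnegative, so $\io\theta\,dx$ never exceeds the time-independent total initial energy. With these facts, together with $\|\theta^+(0)\|_{L^{\infty}(\Omega)} = \|\theta_0\|_{L^{\infty}(\Omega)} < \infty$ from Definition \ref{soldef}, all hypotheses of Lemma \ref{lemmos} hold and the lemma yields $\|\theta\|_{L^{\infty}((0,\infty)\times\Omega)} = \|\theta^+\|_{L^{\infty}} \leq C$.

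The only genuinely delicate step is the bookkeeping of the constant, i.e.\ checking that $C$ depends solely on the quantities listed in the statement. The dependence on $\|\bb{v}_0\|_{\H}$, $\|\bu_0\|_{\htwo}$, $\|\theta_0\|_{\hom}$ and the lower bound $\tilde\theta$ enters entirely through $\F(\bu_0,\bb{v}_0,\theta_0)$ — the velocity and higher-order displacement terms are controlled by $\|\bb{v}_0\|_{\H}$ and $\|\bu_0\|_{\htwo}$, while $\tilde\theta$ is what keeps the Fisher term $\io |\nabla\theta_0|^2/\theta_0\,dx$ finite — and this same quantity bounds both $\|f\|_{L^{\infty}(0,\infty;\ld)}$ and, via \eqref{balance}, the $L^1$-norm of $\theta$; the dependence on $\|\theta_0\|_{L^{\infty}(\Omega)}$ is exactly the $\|\eta^+(0)\|_{L^{\infty}}$ appearing in Lemma \ref{lemmos}. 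I do not anticipate a serious obstacle, as every ingredient is a quantitative consequence of results already in hand; the smallness of the data is invoked only once, to trigger Theorem \ref{estsol} and thereby secure the uniform-in-time control of $\div\bu_t$ that makes $f$ admissible in the Moser iteration.
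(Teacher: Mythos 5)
Your proposal is correct and follows exactly the paper's own route: apply Lemma \ref{lemmos} with $\eta=\theta$ and $f=|\nu|\,|\div\bu_t|$, using Theorem \ref{estsol} to bound $\|\div\bu_t\|_{\ld}$ uniformly in time. You simply spell out the verifications (the $L^1$ control of $\theta$ via \eqref{balance}, the Neumann condition, and the constant bookkeeping) that the paper leaves implicit.
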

\begin{proof}
We utilize Lemma \ref{lemmos} for $\theta$ as $\eta$ and $|\nu||\div\bu_t|$ as $f$. Due the lower equation in \eqref{system}, we obtain
\begin{align*}
\theta_t-\Delta\theta\leq (\theta+1)|\nu||\div\bu_t|.
\end{align*}
By Theorem \ref{estsol}, we have an estimate for $\|\nu\div\bu_t\|_{\ld}$. Hence, we obtain the thesis, because $\theta^+=\theta$.
\end{proof}

In addition, the lower boundedness of the temperature can be derived from Lemma \ref{lemmos}. We do not need it in the proof of the existence of solutions. However, it will be important in our considerations about the time-asymptotics of \eqref{system}. Moreover, this is a valuable result in its own right.
\begin{tw}\label{downbotemp}
Let us assume $\bu$ and $\theta$ are smooth solutions of \eqref{system} with the positive temperature. Then, there exists the positive constant $C$ depedent on $\Omega$, $d$ ,$|\nu|$, $\lambda$, $\mu$, $\| \bb v_0\|_{\H}$, $\|\bu_0\|_{\htwo}$, $\|\theta_0\|_{\hom}$, $\tilde\theta$, and $\|\theta_0\|_{L^{\infty}(\Omega)}$ such that
\begin{align*}
\theta\geq C\qquad \textrm{ for almost all } (t,x)\in (0,\infty)\times\Omega.
\end{align*}
\end{tw}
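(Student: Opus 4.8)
The plan is to apply the Moser-type Lemma \ref{lemmos} not to $\theta$ itself but to the function $\eta := -\log\theta = -\tau$, so that an $L^{\infty}$ bound on $\eta^+$ translates directly into a lower bound on $\theta$. First I would record the evolution equation for $\eta$. Starting from \eqref{eqtau}, namely $\tau_t-\Delta\tau-|\nabla\tau|^2=-\nu\div\bu_t$, I negate it to obtain
\begin{align*}
\eta_t-\Delta\eta=-|\nabla\tau|^2+\nu\div\bu_t\leq|\nu||\div\bu_t|=:f,
\end{align*}
where the inequality uses $-|\nabla\tau|^2\leq 0$. Since $f\geq 0$ and $\eta^++1\geq 1$, this is precisely the differential inequality $\eta_t-\Delta\eta\leq(\eta^++1)f$ required in \eqref{lemmosas}. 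The Neumann condition $\partial\eta/\partial\bb n=0$ follows from $\nabla\theta\cdot\bb n=0$, because $\nabla\eta=-\nabla\theta/\theta$ on $\partial\Omega$.

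It then remains to verify that the three data-dependent quantities controlling the constant in Lemma \ref{lemmos} are finite. The bound $f\in L^{\infty}(0,\infty;\ld)$ follows from Theorem \ref{estsol}: the functional $\F$ stays below its initial value, and $\|\div\bu_t\|_{\ld}^2\leq 2\F$, so $\|f\|_{L^{\infty}(0,\infty;\ld)}\leq|\nu|\sqrt{2\F(\bu_0,\bb v_0,\theta_0)}$; the corresponding $L^1$ bound follows at once since $\Omega$ is bounded. The initial bound $\|\eta^+(0)\|_{L^{\infty}(\Omega)}$ is controlled by the hypothesis $\theta_0\geq\tilde\theta>0$: indeed $\eta(0)=-\log\theta_0\leq-\log\tilde\theta$, whence $\eta^+(0)\leq(-\log\tilde\theta)^+$.

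The only genuinely non-routine input is the uniform-in-time $L^1$ bound on $\eta^+$, and this is exactly where the second law of thermodynamics enters through Proposition \ref{ogrzprop}: that proposition gives $\tau\in L^{\infty}(0,\infty;L^1(\tn))$, and since $\eta^+=(-\tau)^+\leq|\tau|$, we conclude $\|\eta^+\|_{L^{\infty}(0,\infty;L^1(\tn))}\leq\|\tau\|_{L^{\infty}(0,\infty;L^1(\tn))}<\infty$. With all hypotheses verified, Lemma \ref{lemmos} yields $\|\eta^+\|_{L^{\infty}((0,\infty)\times\Omega)}\leq C$, that is $-\log\theta\leq C$ almost everywhere, which is equivalent to $\theta\geq e^{-C}$ and proves the claim. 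I expect the main obstacle to be conceptual rather than computational: recognizing that the favorable sign of the $|\nabla\tau|^2$ term in the log-temperature equation makes $\eta=-\log\theta$ a subsolution amenable to the Moser iteration, and that the required $L^1$ control is supplied by the entropy estimate rather than by any fresh computation.
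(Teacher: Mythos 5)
Your proposal is correct and follows essentially the same route as the paper: the paper likewise applies the Moser-type Lemma \ref{lemmos} to $\eta=-\tau=-\log\theta$ with $f=|\nu||\div\bu_t|$, using the sign of $-|\nabla\tau|^2$ in \eqref{eqtau} and the entropy bound $\tau\in L^{\infty}(0,\infty;L^1(\tn))$ from Proposition \ref{ogrzprop}, and concludes $\theta\geq e^{-\|\tau\|_{L^{\infty}}}$. Your write-up is in fact more explicit than the paper's in verifying the hypotheses of Lemma \ref{lemmos} (the $L^{\infty}(0,\infty;\ld)$ bound on $f$ via Theorem \ref{estsol} and the bound on $\eta^+(0)$ via $\theta_0\geq\tilde\theta$), which the paper leaves implicit.
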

\begin{proof}
Let us remind the notation $\tau:=\log\theta$. We want to estimate $\|\tau\|_{L^{\infty}((0,\infty)\times\Omega)}$. The bound for $\|\tau^+\|_{L^{\infty}((0,\infty)\times\Omega)}$ we have from Theorem \ref{upbofortemp}. Hence it is left to constrain the norm $\|\tau^-\|_{L^{\infty}((0,\infty)\times\Omega)}$, where $\tau^-:=\max\{0,-\tau\}$. We know that $\tau$ satisfies equation \eqref{eqtau}, so we obtain
\begin{align*}
(-\tau)_t-\Delta(-\tau)+|\nabla\tau|^2=\nu\div\bu_t.
\end{align*}
Hence, we have \eqref{lemmosas} for $\eta:=-\tau$ and $f:=|\nu||\div\bu_t|$. Consequently, by Lemma \ref{lemmos}, we assert that $\tau^-\in L^{\infty}((0,\infty)\times\tn))$. This infers that
\begin{align*}
\theta\geq e^{-\|\tau\|_{L^{\infty}((0\infty)\times\tn)}}.
\end{align*} 
\end{proof}

\section{Existence and uniqueness of solutions}\label{secexst}
In this section, we prove the existence and uniqueness of solutions. First, we construct the sequence of approximate solutions. We utilize the half-Galerkin method there. Then, we apply the results from Section \ref{druga} to the sequence to obtain the estimates. This will enable us to reach the limit in the sequence. In this way, we will get the existence. Then, we will prove the uniqueness of solutions. A similar method to build the solution was applied in \cite{Biest, BC, CMT}.

\subsection{Approximate problem}\label{subapp}

Let $\{\bs\phi_k\}$ denote the same sequence as in Subsection \ref{sublap}, i.e. let $\{\bs\phi_k\}$ be the sequence of eigienfunctions of \eqref{probeig}. We signify $V_n=\operatorname{span}\{\bs\phi_k\}_{1\leq k\leq n}$ for $n\in\n$.
\begin{defi}\label{apdef}
We say that $\bb u_n\in C^2([0,\infty);V_n)$ and $\theta_n\in L^2_{\loc}(0,\infty;\htwo)\cap H^1_{\loc}(0,\infty;\ld)$ are solutions of an approximate problem if for all $\bs\phi\in V_n$ and $\psi\in \hom$ the following equations are satisfied for almost all $t\in (0,\infty)$
\begin{align*}
\io \bb{u}_{n,tt}\cdot\bs\phi dx+(\wspnd)\io \div\bb{u}_{n}\div\bs\phi dx+\mu\io\curl\bb{u}_n\cdot\curl\bs\phi&=\nu\io\theta_n\div\bs\phi dx,\\
\io\theta_{n,t}\psi dx+\io \nabla\theta_{n}\cdot\nabla\psi dx&=-\nu\io\psi\theta_n \div \bb{u}_{n,t}dx.
\end{align*}
Moreover, the following equalities
\begin{align*}
\theta_n(0)=\theta_{0},\quad \bb u_n(0)=P_{V_n}\bb u_0,\quad \bb u_{n,t}(0)=P_{V_n}\bb v_0
\end{align*}
are satisfied. Where $P_{V_n}$ is an orthogonal projection of the corresponding spaces onto $V_n$.
\end{defi}
We proceed in an analogous way as in  \cite[Proposition 1 and Lemma 4.2]{CMT} to obtain the following result.
\begin{prop}
For every $n\in\mathbb{N}$, there exists a solution $(\bb u_n,\theta_n)$ of the approximate problem in the sense of Definition \ref{apdef}. Moreover, there exist $\hat\theta_n(t)>0$ such that $\theta_n(x,t)\geq\hat\theta_n(t)$ for almost all $(t,x)\in \ot$.
\end{prop}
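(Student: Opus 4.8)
The plan is to treat the coupled approximate system as a fixed point of a map that solves each equation in turn, exploiting that in finite dimensions the displacement equation becomes a linear second-order ODE system while the temperature equation is a linear parabolic problem. Writing $\bb u_n=\sum_{k=1}^n g_k(t)\bs\phi_k$ and testing the momentum equation against each $\bs\phi_j$, the $L^2$-orthonormality of $\{\bs\phi_k\}$ makes the mass matrix the identity, so the coefficients solve $g_j''+\sum_{k=1}^n K_{jk}g_k=\nu\io\theta_n\div\bs\phi_j\,dx$, where $K_{jk}=(\wspnd)\io\div\bs\phi_k\div\bs\phi_j\,dx+\mu\io\curl\bs\phi_k\cdot\curl\bs\phi_j\,dx$ is symmetric and positive semidefinite. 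Thus, for a given $\theta_n$, standard ODE theory yields a unique $\bb u_n\in C^2([0,\infty);V_n)$. Conversely, for a given $\bb u_n$, the temperature equation is the linear parabolic problem $\theta_{n,t}-\Delta\theta_n+(\nu\div\bb u_{n,t})\theta_n=0$ with homogeneous Neumann data; since $\bb u_{n,t}(t,\cdot)\in V_n$ is smooth in $x$ and continuous in $t$, the zeroth-order coefficient is bounded, and classical linear parabolic theory provides a unique $\theta_n\in L^2_{\loc}(0,\infty;\htwo)\cap H^1_{\loc}(0,\infty;\ld)$.

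Building on this, I would define the solution map $\T$ sending $\theta_n$ to the temperature produced by first solving the ODE for $\bb u_n$ and then the parabolic equation, and establish local-in-time existence on some interval $[0,T_0]$ by showing $\T$ is a contraction in $C([0,T_0];\ld)$ (or in a parabolic energy norm), exactly as in \cite[Proposition 1 and Lemma 4.2]{CMT}. The bilinear coupling term $\theta_n\div\bb u_{n,t}$ is the only nonlinearity, and because each half-step is linear, the contraction constant can be made small by shrinking $T_0$; the finite dimensionality of $V_n$ guarantees that $\div\bb u_{n,t}$ is controlled by $\|\bb u_n\|_{C^2([0,T_0];V_n)}$, which in turn depends continuously on the data built from $\theta_n$.

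To pass from local to global existence, I would derive a priori bounds on the existence interval: the approximate analogue of the energy balance \eqref{balance}, obtained by testing the momentum equation with $\bb u_{n,t}$ and integrating the temperature equation over $\Omega$, controls $\|\bb u_{n,t}\|_{\ld}$, $\|\bb u_n\|_{\H}$ and $\io\theta_n\,dx$ uniformly, while the parabolic equation then bounds $\theta_n$ in the regularity classes required by Definition \ref{apdef}. Since the local solution can fail to extend only if these norms blow up in finite time, the uniform bounds force $T_0=\infty$.

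Finally, the lower bound follows from a comparison argument for the linear parabolic equation. Writing the temperature equation as $\theta_{n,t}-\Delta\theta_n=-(\nu\div\bb u_{n,t})\theta_n$ and recalling $\theta_0\geq\tilde\theta>0$, the spatially constant function $\hat\theta_n(t):=\tilde\theta\exp\left(-\int_0^t\||\nu|\,\div\bb u_{n,t}(s)\|_{L^{\infty}(\Omega)}\,ds\right)$ is a subsolution, so the parabolic maximum principle gives $\theta_n(t,x)\geq\hat\theta_n(t)>0$ for almost all $(t,x)\in\ot$. The main obstacle is the contraction step, where the time-differentiated factor $\bb u_{n,t}$ inside the nonlinearity must be estimated consistently with the regularity used to solve the parabolic half-step; once the right norm is fixed this is routine, but choosing that norm so that both half-maps are simultaneously contractive is the delicate point.
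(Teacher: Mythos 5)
Your proposal is correct and follows essentially the same route as the paper, which itself only cites the half-Galerkin construction of \cite[Proposition 1 and Lemma 4.2]{CMT}: a fixed-point iteration between the finite-dimensional linear ODE system for the Galerkin coefficients and the linear parabolic problem for $\theta_n$, local existence by contraction, global extension via the energy balance, and the lower bound $\theta_n\geq\hat\theta_n(t)$ from the explicit exponential subsolution and the parabolic comparison principle. The only point to keep in mind when writing it out is that the eigenfunctions $\bs\phi_k$ must be regular enough that $\div\bb u_{n,t}(t,\cdot)\in L^{\infty}(\Omega)$, which the paper secures by noting that smoothness of $\partial\Omega$ yields arbitrarily regular eigenfunctions.
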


The choice of $\{\bs\phi_k\}$ for a basis ensures that $\bu_n$ satisfies \eqref{ubound}.

\subsection{Existence of solutions}\label{subex}
In this section, we establish the existence of global solutions, provided that $\F(\bu_0,\bb v_0,\theta)\leq D$. We begin applying the estimates from Section \ref{druga} to the sequence of the approximate solutions.
\begin{tw}\label{twglest}
Let us assume that $(\bu_n,\theta_n)$ is a solution to an approximate problem in the sense of Definition \ref{apdef}. Moreover, let us assume that the initial values satisfy the inequality
\begin{align*}
\initcond
\end{align*}
where $D$ is taken from Theorem \ref{estsol}. Then, for any $T>0$ the following inequalities hold
\begin{align}
\|\bb u_n\|_{L^{\infty}(0,T;\htwo)}&\leq C, \label{t1glest}\\
\|\bb u_{n,t}\|_{L^{\infty}(0,T;\H)}&\leq C,\label{t2glest}\\
\|\bb u_{n,tt}\|_{L^{\infty}(0,T;\ld)}&\leq C,\label{t3glest}\\
\|\theta_n\|_{L^{\infty}(0,T;\hom)}&\leq C,\label{t4glest}\\
\|\theta_{n,t}\|_{L^{2}(0,T;\ld)}&\leq \widetilde C(T),\label{t5glest}\\
\|\theta_n\|_{L^2(0,T;\htwo)}&\leq \widetilde C(T),\label{t6glest}\\
\|\theta_n\|_{L^{\infty}((0,\infty)\times\Omega)}&\leq C,\label{t7glest}\\
\theta&\geq \frac1C,\label{t8glest}
\end{align}
and $C$ and $\widetilde{C}$ depend on $\Omega$, $|\nu|$, $\mu$, $\lambda$, $d$, $\|\bb u_0\|_{\htwo}$, $\|\bb v_0\|_{\hom}$, $\|\theta_0\|_{\hom}$, $\|\theta_0\|_{L^{\infty}(\tn)}$ and $\tilde\theta$ (the bound from below of the initial temperature, see Definition \ref{soldef}). In addition, the constant $\widetilde C$ also depends on $T$.
\end{tw}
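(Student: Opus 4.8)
The plan is to transfer the a priori estimates already established for smooth solutions (Theorem~\ref{estsol}, Theorem~\ref{upbofortemp}, Theorem~\ref{downbotemp}) directly to the Galerkin approximations $(\bu_n,\theta_n)$, since these were proven by formal manipulations that the approximate scheme respects. The crucial observation is that the basis $\{\bs\phi_k\}$ consists of eigenfunctions of $-\Delta$ satisfying the boundary conditions \eqref{ubound}, so testing the approximate momentum equation with $-\Delta\bu_{n,t}=\sum\xi_k(\cdots)\bs\phi_k$ is legitimate (it is a finite linear combination of the $\bs\phi_k$, hence an admissible test function in $V_n$), and the boundary integrals that were discarded in Lemma~\ref{wonderfulformula} vanish for the same reason they did there. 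Thus $\ddt\F(\bu_n,\bu_{n,t},\theta_n)\leq C\|\nabla^2\theta_n^{\sul}\|_{\ld}^2(\F-D)$ holds verbatim for the approximations, and since $\F(\bu_n(0),\bu_{n,t}(0),\theta_n(0))=\F(P_{V_n}\bu_0,P_{V_n}\bb v_0,\theta_0)\leq\F(\bu_0,\bb v_0,\theta_0)\leq D$ (the projections only decrease the displacement contributions, as $B$ is the relevant inner product), Theorem~\ref{estsol} yields $\F(\bu_n,\bu_{n,t},\theta_n)\leq\F(\bu_0,\bb v_0,\theta_0)$ uniformly in $n$ and $t$.

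From this single uniform bound on $\F$ the estimates \eqref{t1glest}--\eqref{t4glest} follow almost immediately. The control of $\|\div\bu_{n,t}\|_{\ld}^2+\|\curl\bu_{n,t}\|_{\ld}^2$ gives \eqref{t2glest} after invoking the equivalence of $\|\cdot\|_{\H}$ with the $\hom$-norm (established via coercivity of $B$ in Subsection~\ref{sublap}); the bound on $\|\nabla\div\bu_n\|_{\ld}^2+\|\curl\curl\bu_n\|_{\ld}^2$ together with the identity \eqref{identity} controls $\|\Delta\bu_n\|_{\ld}$, whence \eqref{t1glest} follows from elliptic regularity (Theorem~\ref{prexpr}); and the Fisher-information term plus the $L^\infty$-bound on $\theta_n$ (which is \eqref{t7glest}) yields \eqref{t4glest} since $\|\nabla\theta_n\|_{\ld}^2=\int|\nabla\theta_n|^2/\theta_n\cdot\theta_n\,dx\leq\|\theta_n\|_{L^\infty}\int|\nabla\theta_n|^2/\theta_n\,dx$. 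For \eqref{t3glest} I would read $\|\bu_{n,tt}\|_{\ld}$ off the approximate momentum equation itself: $\bu_{n,tt}=(\wspnd)\nabla\div\bu_n-\mu\curl\curl\bu_n-\nu P_{V_n}\nabla\theta_n$, all three terms of which are already controlled. The uniform estimates \eqref{t7glest} and \eqref{t8glest} are precisely Theorem~\ref{upbofortemp} and Theorem~\ref{downbotemp} applied to $(\bu_n,\theta_n)$, noting that these rely only on the Moser iteration of Lemma~\ref{lemmos} with $f=|\nu||\div\bu_{n,t}|\in L^\infty(0,\infty;\ld)$, a bound we have just secured.

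The two remaining estimates \eqref{t5glest} and \eqref{t6glest}, which carry the $T$-dependent constant $\widetilde C(T)$, are the technical heart of the argument and require a parabolic-regularity bootstrap on the heat equation. Here I would test the approximate entropy equation with $-\Delta\theta_n$ (admissible as a limit of test functions since $\theta_n$ lies in $\htwo$) to obtain, after integration by parts and use of the Neumann condition,
\begin{align*}
\ul\ddt\io|\nabla\theta_n|^2dx+\io|\Delta\theta_n|^2dx=\nu\io\theta_n\div\bu_{n,t}\Delta\theta_n\,dx.
\end{align*}
The right-hand side is estimated by Young's inequality as $\tfrac12\|\Delta\theta_n\|_{\ld}^2+C\|\theta_n\|_{L^\infty}^2\|\div\bu_{n,t}\|_{\ld}^2$, absorbing the Laplacian term on the left and leaving a source controlled uniformly by \eqref{t7glest} and \eqref{t2glest}. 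Integrating over $(0,T)$ gives $\|\Delta\theta_n\|_{L^2(0,T;\ld)}\leq\widetilde C(T)$, and elliptic regularity for the Neumann Laplacian upgrades this to \eqref{t6glest}; then reading $\theta_{n,t}=\Delta\theta_n-\nu\theta_n\div\bu_{n,t}$ off the equation and bounding each term in $L^2(0,T;\ld)$ yields \eqref{t5glest}. The main obstacle is precisely this last step: unlike the time-independent bounds, the $L^2$-in-time integration of the source term $\|\div\bu_{n,t}\|_{\ld}^2$ only inherits a finite bound over finite intervals, which is exactly why $\widetilde C$ must depend on $T$; care is needed to verify that testing with $-\Delta\theta_n$ is legitimate within the half-Galerkin scheme (the temperature equation is solved in the full space $\hom$, not a finite-dimensional one) and that all boundary terms genuinely vanish under the Neumann condition.
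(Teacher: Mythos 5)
Your proposal is correct and follows essentially the same route as the paper: transfer the Section~\ref{druga} estimates (Theorems~\ref{estsol}, \ref{upbofortemp}, \ref{downbotemp}) to the Galerkin approximations, deduce \eqref{t1glest}--\eqref{t4glest} from the uniform bound on $\F$ via the coercivity of $B$, the identity \eqref{identity} and elliptic regularity (Theorem~\ref{prexpr}), and obtain the $T$-dependent bounds \eqref{t5glest}--\eqref{t6glest} from a parabolic estimate on the heat equation. The only cosmetic differences are that the paper squares the pointwise heat equation and integrates (getting $\theta_{n,t}$ and $\Delta\theta_n$ in $L^2$ simultaneously) rather than testing with $-\Delta\theta_n$, and tests the momentum equation with $\bu_{n,tt}\in V_n$ rather than reading $\bu_{n,tt}$ off the projected equation; both variants yield the same estimates.
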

\begin{proof}
The standard theory of regularity for elliptic equations (see \cite{Evans, Lady, Amann}) and the choice of an appropriate basis guarantee the sufficient regularity of $\bu_n$ and $\theta_n$ to employ the results from Section \ref{druga}. Inequalities \eqref{t7glest} and \eqref{t8glest} are immediate consequences of Theorem \ref{upbofortemp} and Theorem \ref{downbotemp}.
Theorem \ref{estsol}, and \eqref{t7glest} yield
\begin{align}\label{in1glest}
\nonumber\|\div\bb u_{n,t}\|^2_{\ld}&+\|\curl\bb u_{n,t}\|^2_{\ld}+(\wspnd)\|\nabla\div\bb u_n\|^2_{\ld}+\lambda\|\curl\curl\bb u_n\|^2_{\ld}\\
&+{\|\theta_n\|_{L^{\infty}((0,T)\times\tn)}}^{-1}\|{\nabla\theta_n}\|^2_{\ld}\leq\|\div\bb u_{n,t}\|^2_{\ld}+\|\curl\bb u_{n,t}\|^2_{\ld}\nonumber\\
&+(\wspnd)\|\nabla\div\bb u_n\|^2_{\ld}+\lambda\|\curl\curl\bb u_n\|^2_{\ld}+\left\|\frac{\nabla\theta_n}{\sqrt{\theta_n}}\right\|^2_{\ld}
\leq C.
\end{align}

Thereby, we immediately get \eqref{t2glest}. Thanks to Theorem \ref{prexpr} and \eqref{identity}, we have
\begin{align*}
\|\bb u_n\|_{\htwo}\leq C\|\Delta\bb u_n\|_{\ld}\leq C(\|\nabla\div\bb u_n\|_{\ld}+\|\curl\curl\bb u_n\|_{\ld}).
\end{align*}
Hence, inequality \eqref{in1glest} implies \eqref{t1glest}.

Let us estimate the temperature. We utilize the Gagliardo-Nirenberg inequality and the Young inequality to obtain
\begin{align*}
\|\theta_n\|_{\ld}\leq C(\|\nabla\theta_n\|_{\ld}+\|\theta_n\|_{L^1(\tn)}).
\end{align*}
Let us note that the above, together with \eqref{balance} and \eqref{in1glest}, gives \eqref{t4glest}. Now, we proceed towards \eqref{t3glest}. Let us take $\bb u_{n, tt}$ as $\bs\phi$ in the upper equation in Definition \ref{apdef}. Then, we integrate by parts, and we use the Young inequality, and get
\begin{align*}
\|\bb u_{n,tt}\|^2_{\ld}&=(\wspnd)\io\bb u_{n,tt}\cdot\nabla\div\bb u_n dx-\mu\io\bb u_{n,tt}\cdot\curl\curl\bb u_ndx-\nu\io\nabla\theta_n\cdot\bb u_{n,tt}dx\\
&\leq\frac12\|\bb u_{n,tt}\|^2_{\ld}+C(\|\nabla\div\bb u_n\|_{\ld}^2+\|\curl\curl\bb u_n\|_{\ld}^2+\|\nabla\theta_n\|_{\ld}^2).
\end{align*}
This, thanks to \eqref{in1glest} gives us \eqref{t3glest}.

Since the solution is sufficiently regular, the second equation in Definition \ref{apdef} is satisfied pointwise. Let's square both sides of it and integrate it over $\tn$. It leads us to
\begin{align*}
\io \theta_{n,t}^2dx-2\io \theta_{n,t}\Delta\theta_ndx+\io(\Delta\theta_n)^2dx=\nu^2\io \theta_n^2(\div\bb u_{n,t})^2dx.
\end{align*}
We integrate by parts the second integral on the left-hand side and then integrate it over the interval $[0, T]$. We obtain
\begin{align}\label{intwglestlast}
\|\theta_{n,t}\|_{L^2(0,T;\ld)}^2&+\|\Delta\theta_n\|_{L^2(0,T;\ld)}^2\nonumber\\
&\leq 2\|\nabla\theta_{0,n}\|_{\ld}^2+\nu^2T\|\theta_n\|_{L^{\infty}((0,T)\times\tn)}^2\|\nabla\bb u_{n,t}\|^2_{L^{\infty}(0,T;\ld)}.
\end{align}
The right-hand side is bounded because of \eqref{t2glest} and \eqref{t7glest}. Hence, we get \eqref{t5glest}. Moreover, from \cite[Theorem III.4.3]{Boyer} we have
\begin{align*}
\|\nabla\theta\|_{\ld}+\|\nabla^2\theta\|_{\ld}\leq C\|\Delta\theta\|_{\ld}.
\end{align*}
This together with \eqref{t4glest} and \eqref{intwglestlast} entail \eqref{t6glest}.
\end{proof}

Now, we can prove the existence of solutions.
\begin{tw}
For any initial values satisfying
\begin{align*}
\initcond
\end{align*} there exists a solution to problem \eqref{system} as described in Definition \ref{soldef}. Moreover, we have that there exist constants $C,c>0$ such that
\begin{align}\label{tezexgl}
c\leq\theta\leq C\quad\textrm{for almost all }(t,x)\in [0,\infty)\times\Omega.
\end{align}
\end{tw}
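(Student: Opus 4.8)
The plan is to take the sequence of approximate solutions $(\bu_n,\theta_n)$ from Definition~\ref{apdef}, whose existence has just been asserted, and pass to the limit using the uniform estimates \eqref{t1glest}--\eqref{t8glest} of Theorem~\ref{twglest}. Since the bounds \eqref{t1glest}--\eqref{t4glest}, \eqref{t7glest}, \eqref{t8glest} are independent of $T$ while \eqref{t5glest}--\eqref{t6glest} depend on $T$, I would first fix an arbitrary $T>0$, extract a convergent subsequence on $[0,T]$, and then run a diagonal argument over $T=1,2,3,\dots$ to produce limit functions on all of $[0,\infty)$ (this is exactly what the $\loc$-in-time entries of the regularity class \eqref{reggl} permit). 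By the Banach--Alaoglu theorem the bounds furnish a subsequence (not relabeled) with $\bu_n\overset{*}{\rightharpoonup}\bu$ in $L^{\infty}(0,T;\htwo\cap\H)$, $\bu_{n,t}\overset{*}{\rightharpoonup}\bu_t$ in $L^{\infty}(0,T;\H)$, $\bu_{n,tt}\overset{*}{\rightharpoonup}\bu_{tt}$ in $L^{\infty}(0,T;\ld)$, $\theta_n\overset{*}{\rightharpoonup}\theta$ in $L^{\infty}(0,T;\hom)$, $\theta_{n,t}\rightharpoonup\theta_t$ in $L^{2}(0,T;\ld)$, and $\theta_n\rightharpoonup\theta$ in $L^{2}(0,T;\htwo)$. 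Lower semicontinuity of the norms under these limits places $\bu$ and $\theta$ in the regularity class \eqref{reggl}.

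The crucial point is compactness, needed to treat the nonlinear term $\nu\theta_n\div\bu_{n,t}$. Since $\theta_n$ is bounded in $L^{2}(0,T;\htwo)$ by \eqref{t6glest} and $\theta_{n,t}$ is bounded in $L^{2}(0,T;\ld)$ by \eqref{t5glest}, the Aubin--Lions lemma together with the compact embedding $\htwo\hookrightarrow\hookrightarrow\hom\hookrightarrow\ld$ yields $\theta_n\to\theta$ strongly in $L^{2}(0,T;\ld)$ and, up to a further subsequence, a.e. on $(0,T)\times\Omega$. Likewise, \eqref{t2glest}--\eqref{t3glest} together with the compact embedding $\H\hookrightarrow\hookrightarrow\ld$ give, via Aubin--Lions, strong convergence $\bu_{n,t}\to\bu_t$ in $C([0,T];\ld)$, and the analogous argument applied to \eqref{t1glest}--\eqref{t2glest} gives $\bu_n\to\bu$ and $\theta_n\to\theta$ in $C([0,T];\ld)$.

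With these convergences in hand the limit passage becomes routine. In the momentum equation all terms are linear in $\bu_n$ and $\theta_n$, so the weak-$*$ limits suffice after fixing a test function $\bs\phi\in V_m$, letting $n\to\infty$ with $n\geq m$, and invoking the density of $\bigcup_m V_m$ in $\H$. For the entropy equation I would split
\begin{align*}
\theta_n\div\bu_{n,t}-\theta\div\bu_t=(\theta_n-\theta)\div\bu_{n,t}+\theta\,(\div\bu_{n,t}-\div\bu_t);
\end{align*}
the first summand tends to $0$ in $L^{1}((0,T)\times\Omega)$ because $\theta_n\to\theta$ strongly in $L^{2}$ while $\div\bu_{n,t}$ is bounded in $L^{2}$, and the second tends to $0$ weakly because $\theta\psi\in L^{2}$ (here $\theta\in L^{\infty}$ by \eqref{t7glest}) is a fixed function tested against the weakly convergent $\div\bu_{n,t}$. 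The a.e. convergence of $\theta_n$ then transfers the bounds \eqref{t7glest} and \eqref{t8glest} to the limit, giving \eqref{tezexgl} with $c=1/C$. Finally, the strong $C([0,T];\ld)$ convergences identify the data: $\theta(0)=\lim\theta_n(0)=\theta_0$, $\bu(0)=\lim P_{V_n}\bu_0=\bu_0$, and $\bu_t(0)=\lim P_{V_n}\bb v_0=\bb v_0$, while the continuity statements of Definition~\ref{soldef} follow from the regularity already collected.

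The main obstacle is precisely this nonlinear coupling term: one cannot pass to the limit by weak convergence alone, and the entire argument hinges on extracting strong $L^{2}$ convergence of $\theta_n$ from the Aubin--Lions lemma (and strong $C([0,T];\ld)$ convergence of $\bu_{n,t}$ to pin down the initial velocity). Everything else --- the linear terms, the recovery of the regularity class \eqref{reggl}, the boundary conditions inherited from the basis $\{\bs\phi_k\}$, and the identification of the initial data --- is standard once the strong convergence is available.
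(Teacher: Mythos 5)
Your proposal is correct and follows essentially the same route as the paper: extract weak-$*$ limits from the uniform bounds of Theorem \ref{twglest}, use the Aubin--Lions lemma to get strong $L^2_{\loc}$ (and a.e.) convergence of $\theta_n$ so as to pass to the limit in the nonlinear coupling term, and transfer \eqref{t7glest}--\eqref{t8glest} to the limit via the pointwise convergence. The paper compresses the nonlinear limit passage into ``in the standard way'' and invokes the Gelfand triple for the continuity-in-time statements, whereas you spell out the splitting of $\theta_n\div\bu_{n,t}$ and the diagonal argument over $T$; these are elaborations, not a different method.
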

\begin{proof}
By Theorem \ref{twglest} there exists a subsequence of $(\bb u_n,\theta_n)$ (still denoted as $(\bb u_n,\theta_n)$) and
\begin{align*}
\bb u&\in L^{\infty}(0,\infty;\htwo\cap\H)\cap W^{1,\infty}(0,\infty;\H)\cap W^{2,\infty}(0,\infty;\ld),\\
\theta&\in L^{\infty}(0,\infty;\hom)\cap H^1_{\loc}(0,\infty;\ld)\cap L^2_{\loc}(0,\infty;\htwo),
\end{align*}
such that the following weak convergences hold
\begin{align*}
\bb u_n&\stackrel *{\rightharpoonup}\bb u\quad\ \;  \textrm{ in }L^{\infty}(0,\infty;\htwo),\\
\bb u_{n,t}&\stackrel *{\rightharpoonup}\bb u_t\quad\ \textrm{ in }L^{\infty}(0,\infty;\H),\\
\bb u_{n,tt}&\stackrel *{\rightharpoonup}\bb u_{tt}\quad \textrm{ in }L^{\infty}(0,\infty;\ld),\\
\theta_{n}&\stackrel*{\rightharpoonup}\theta\quad\ \ \textrm{ in }L^{\infty}(0,\infty;\hom),\\
\theta_{n,t}&\stackrel{}{\rightharpoonup}\theta_t\quad\ \textrm{ in }L^2_{\loc}(0,\infty;\ld),\\
\theta_{n}&\stackrel{}{\rightharpoonup}\theta\quad\ \ \textrm{ in }L^2_{\loc}(0,\infty;\htwo).
\end{align*}

Due to the Gelfand triple theorem (see, for instance, \cite{Boyer, Evans}), we notice that
\[
\theta\in C([0,\infty);\hom),
\]
\[
\bb u\in C([0,\infty);\H)\;\;\mbox{and}\;\;\bb u_t\in C([0,\infty);\ld).
\]
Moreover, in virtue of the Aubin-Lions lemma (see \cite{Boyer}), we have
\begin{align}\label{zbexgl}
\theta_n\to\theta\quad\textrm{ in }L^{2}_{\loc}(0,\infty;\ld).
\end{align}
Hence, in the standard way, we show that $\bb u$ and $\theta$ satisfy the formulas \eqref{defmom} and \eqref{defheat} in Definition \ref{soldef}. The boundary conditions are implicit in these variational formulations. 

Formula \eqref{zbexgl} implies that there exists the subsequence of $\theta_n$ such that 
\begin{align*}
\theta_n\to\theta \quad \textrm{for almost all }(t,x)\in [0,\infty)\times\Omega.
\end{align*}
Whence and from \eqref{t7glest} and \eqref{t8glest}, we have \eqref{tezexgl}.
\end{proof}

\subsection{Uniqueness of solutions}\label{subuniq}
In the theorem below, we prove the uniqueness of solutions. Let us perceive that we do not assume that the initial values satisfy $\F\leq D$.
\begin{tw}
The solution satisfying \eqref{system} in the regularity class described in Definition \ref{soldef} is unique.
\end{tw}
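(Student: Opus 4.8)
The plan is to prove uniqueness by the standard energy method applied to the difference of two solutions. Suppose $(\bu_1,\theta_1)$ and $(\bu_2,\theta_2)$ are two solutions in the sense of Definition \ref{soldef} emanating from the same initial data. Set $\bb U := \bu_1-\bu_2$ and $\Theta := \theta_1-\theta_2$. Subtracting the momentum equations \eqref{defmom} and the entropy equations \eqref{defheat} gives a linear system for $(\bb U,\Theta)$ with a forcing term that is quadratic in the solutions, together with zero initial data. I would then construct a single energy quantity
\begin{align*}
E(t):=\ul\io|\bb U_t|^2dx+\frac{\wspnd}2\io(\div\bb U)^2dx+\frac\mu2\io|\curl\bb U|^2dx+\ul\io\Theta^2dx
\end{align*}
and aim to show it satisfies a Grönwall inequality $\ddt E\le C(t)E$ with $E(0)=0$, forcing $E\equiv0$ and hence $\bb U=\Theta=0$.

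The mechanics of the estimate go as follows. First I would test the difference of the momentum equations with $\bb U_t$; since $\bb U_t\in\H$ for almost every $t$, this is a legitimate test function, and by the boundary conditions \eqref{ubound} the Lamé terms integrate by parts cleanly, producing the time derivative of the elastic energy plus the coupling term $-\nu\io\nabla\Theta\cdot\bb U_t\,dx=\nu\io\Theta\div\bb U_t\,dx$. Next I would test the difference of the entropy equations with $\Theta$. The subtracted right-hand side is $-\nu\io(\theta_1\div\bu_{1,t}-\theta_2\div\bu_{2,t})\Theta\,dx$, which I would split as $-\nu\io\Theta\div\bu_{1,t}\,\Theta\,dx-\nu\io\theta_2\div\bb U_t\,\Theta\,dx$. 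Adding the two tested equations, the delicate coupling terms $\nu\io\Theta\div\bb U_t\,dx$ and $-\nu\io\theta_2\div\bb U_t\,\Theta\,dx$ combine into $\nu\io\Theta\div\bb U_t(1-\theta_2)\,dx$, which I would control using the uniform $L^\infty$ bound $c\le\theta_2\le C$ from \eqref{tezexgl} together with a Young inequality, absorbing the $\|\div\bb U_t\|_{\ld}^2$ contribution into $\ddt E$ is not possible directly, so the right approach is to keep $\|\bb U_t\|_{\ld}^2$ on the left via $\ddt$ and move the problematic term to the right, estimating it by $\epsilon\|\bb U_t\|_{\ld}^2+C_\epsilon\|\Theta\|_{\ld}^2$.

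The remaining term $-\nu\io(\div\bu_{1,t})\Theta^2\,dx$ I would bound by $\|\div\bu_{1,t}\|_{L^\infty}$ times $\|\Theta\|_{\ld}^2$, but since $\div\bu_{1,t}$ need not be in $L^\infty$, the cleaner route is Hölder with $\|\Theta\|_{L^4}^2\|\div\bu_{1,t}\|_{\ld}$ followed by Gagliardo--Nirenberg, $\|\Theta\|_{L^4}^2\le C\|\Theta\|_{\hom}\|\Theta\|_{\ld}$; one extra gradient of $\Theta$ appears, which must be paid for by the dissipation term $\io|\nabla\Theta|^2dx$ that the heat equation contributes on the left-hand side. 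Thus I would augment $E$ by retaining $\io|\nabla\Theta|^2\,dxds$ in integrated form, or equivalently keep the parabolic dissipation to absorb the top-order $\Theta$ gradient. After these absorptions, everything collapses to $\ddt E\le C(t)E$ with $C(t)\in L^1_{\loc}$ thanks to the regularity $\bu_{1,t}\in\H$ and the uniform temperature bounds, and Grönwall with $E(0)=0$ finishes the argument.

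I expect the main obstacle to be the coupling term involving $\div\bb U_t$, which carries one more time derivative than the natural energy can absorb. The subtle point is that this term must cancel almost exactly between the two equations up to the factor $(1-\theta_2)$, so that only a lower-order remainder survives; getting this cancellation to work and then controlling the leftover with the $L^\infty$ temperature bounds from \eqref{tezexglm} and the parabolic dissipation of $\Theta$ is the crux. Everything else is a routine application of Hölder, Young, Gagliardo--Nirenberg, and Grönwall, using only the regularity already guaranteed by Definition \ref{soldef}.
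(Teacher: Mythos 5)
Your overall strategy (energy method for the difference, Gr\"onwall with zero initial data) is the same as the paper's, but the proposal breaks down exactly at the point you yourself identify as the crux. After combining the two tested equations you are left with the term $\nu\io\Theta(1-\theta_2)\div\bb{U}_t\,dx$, and you propose to estimate it by $\epsilon\|\bb{U}_t\|_{\ld}^2+C_\epsilon\|\Theta\|_{\ld}^2$. That bound is not available: the integrand contains $\div\bb{U}_t$, not $\bb{U}_t$, and the energy $E$ controls only $\|\bb{U}_t\|_{\ld}$, $\|\div\bb{U}\|_{\ld}$, $\|\curl\bb{U}\|_{\ld}$, $\|\Theta\|_{\ld}$ — there is no control whatsoever of $\|\div\bb{U}_t\|_{\ld}$ at the energy level, and no cancellation occurs (the factor $1-\theta_2$ is neither zero nor small). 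Moreover, converting the momentum coupling $-\nu\io\nabla\Theta\cdot\bb{U}_t\,dx$ into $\nu\io\Theta\div\bb{U}_t\,dx$ moves you in the wrong direction: the paper keeps it in the first form and absorbs it as $\epsilon\|\nabla\Theta\|_{\ld}^2+C\|\bb{U}_t\|_{\ld}^2$ using the parabolic dissipation, which is also what you should do.

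The missing idea is to integrate by parts \emph{the other way} in the quadratic heat-source term: using $\bb{U}_t\cdot\bb n=0$ on $\partial\Omega$, one writes
\begin{align*}
-\nu\io\theta_2\,\Theta\,\div\bb{U}_t\,dx=\nu\io\theta_2\,\nabla\Theta\cdot\bb{U}_t\,dx+\nu\io\Theta\,\nabla\theta_2\cdot\bb{U}_t\,dx,
\end{align*}
so that the derivative lands on the temperatures rather than on $\bb{U}_t$. The first integral is bounded by $\epsilon\|\nabla\Theta\|_{\ld}^2+C\|\bb{U}_t\|_{\ld}^2$ via the $L^\infty$ bound on $\theta_2$; the second requires H\"older in $L^4$, Gagliardo--Nirenberg, and the regularity $\theta_2\in L^2_{\loc}(0,\infty;\htwo)$, which produces a Gr\"onwall coefficient of the form $C(1+\|\nabla^2\theta_2\|_{\ld}^2)\in L^1_{\loc}(0,\infty)$ — this is precisely how the paper closes the argument (with the roles of the indices swapped, since it splits the nonlinearity as $\theta_1\div\bb{U}_t+\Theta\div\bu_{2,t}$). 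Your treatment of the remaining term $-\nu\io(\div\bu_{1,t})\Theta^2\,dx$ via $L^4$ and the dissipation is correct and matches the paper. A minor additional point: the uniqueness theorem does not assume small data, so the $L^\infty$ bound on $\theta_2$ cannot be cited from \eqref{tezexgl}; it must be re-derived by running the Moser iteration of Lemma \ref{lemmos} for an arbitrary solution in the regularity class, which the paper does at the outset.
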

\begin{proof}
Let $(\bb u_1,\theta_1)$ and $(\bb u_2,\theta_2)$ be solutions of \eqref{system} in the sense of Definition \ref{soldef} with the same initial values. Let us denote $\bb u:=\bb u_1-\bb u_2$ and $\theta:=\theta_1-\theta_2$. First, let us note that the regularity given in Definition \ref{soldef} suffices to apply the proof of Theorem \ref{upbofortemp} to $(\bb u_1,\theta_1)$ and $(\bb u_2,\theta_2)$. Therefore, we can infer that
\begin{equation}\label{lith}
\theta_1,\theta_2\in L^{\infty}(\ot).
\end{equation}

Let us subtract the equations for $(\bb u_2,\theta_2)$ from the equations for $(\bb u_1,\theta_1)$. We obtain
\begin{align}\label{equniq}
\begin{split}
\bb u_{tt}-(\wspnd)\nabla\div\bb u +\mu\curl\curl\bb u&=   -\nu\nabla\theta,\\
\theta_t - \Delta \theta &=- \nu\theta_1 \div{\bb u}_{t}-\nu\theta\div\bb u_{2,t}.
\end{split}
\end{align}
We multiply the upper equation in \eqref{equniq} by $\bb u_t$ and integrate over $\tn$. We get
\begin{align}\label{inuuniq}
\nonumber\ul\ddt\left(\|\bb u_t\|^2_{\ld}+(\wspnd)\|\div\bb u\|_{\ld}^2+\mu\|\curl\bb u\|_{\ld}^2\right)&=-\nu\io\nabla\theta\cdot\bb u_t dx\\\leq|\nu|\|\nabla \theta\|_{\ld}\|\bb u_t\|_{\ld}&
\leq\epsilon\|\nabla\theta\|_{\ld}^2 +C\|\bb u_t\|_{\ld}^2.
\end{align}

Next, we multiply the lower equation in \eqref{equniq} by $\theta$ and integrate over $\tn$. It leads us to
\begin{align}\label{eqthuniq}
\ul\ddt \|\theta\|^2_{\ld}+\|\nabla\theta\|_{\ld}^2=- \nu\io\theta\theta_1 \div{\bb u}_{t}dx-\nu\io\theta^2\div\bb u_{2,t}dx=I_1+I_2.
\end{align}
We shall estimate the integrals on the right-hand side of the above equality.
\begin{align}\label{inmthuniq}
I_1 =\nu \io\theta_1\nabla\theta\cdot\bb u_tdx+\mu\io\theta\nabla\theta_1\cdot\bb u_tdx=I_{11}+I_{12}.
\end{align}
The integral $I_{11}$ is estimated using the Schwarz inequality and the Young inequality. We also utilize \eqref{lith} and obtain
\begin{align}\label{in2thuniq}
I_{11}\leq |\nu|\|\theta_1\|_{L^{\infty}((0,\infty)\times\tn)}\|\nabla\theta\|_{\ld}\|\bb u_t\|_{\ld}\leq \epsilon\|\nabla\theta\|_{\ld}^2+C\|\bb u_t\|_{\ld}^2.
\end{align}
Let us now deal with $I_{12}$. We use the Gagliardo-Nirenberg inequality and the Young inequality to arrive at
\begin{align*}
I_{12}&\leq |\nu| \|\theta\|_{L^4(\tn)}\|\nabla\theta_1\|_{L^4(\tn)}\|\bb u_t\|_{\ld}\\
&\leq C(\|\nabla\theta\|_{\ld}+\|\theta\|_{\ld})(\|\nabla^2\theta_1\|_{\ld}+\|\nabla\theta_1\|_{\ld})\|\bb u_t\|_{\ld}.
\end{align*}
We again utilize the Young inequality. It yields
\begin{align}\label{in1thuniq}
I_{12}\leq \epsilon\|\nabla\theta\|_{\ld}^2+ C(\|\nabla^2\theta_1\|_{\ld}^2\|\bb u_t\|_{\ld}^2+\|\bb u_t\|_{\ld}^2+\|\theta\|_{\ld}^2).
\end{align}

We plug \eqref{in2thuniq} and \eqref{in1thuniq} into \eqref{inmthuniq}. It gives
\begin{align}\label{inmfuniq}
I_1\leq 2\epsilon\|\nabla\theta\|_{\ld}^2+ C(\|\nabla^2\theta_1\|_{\ld}^2\|\bb u_t\|_{\ld}^2+\|\bb u_t\|_{\ld}^2+\|\theta\|_{\ld}^2).
\end{align}
Now, let us deal with $I_2$. We again bound it by the Gagliardo-Nirenberg inequality and the Young inequality
\begin{align}\label{inmf1uniq}
I_2\leq |\nu|\|\div\bb u_{2,t}\|_{L^{\infty}(0,\infty;\ld)}\|\theta\|_{L^4(\tn)}^2\leq \epsilon\|\nabla\theta\|^2_{\ld}+C\|\theta\|_{\ld}^2.
\end{align}
We plug \eqref{inmfuniq} and \eqref{inmf1uniq} into \eqref{eqthuniq}. The obtained inequality we add to \eqref{inuuniq}. It leads us to
\begin{align}\label{razdwatrzy}
\nonumber\ul\ddt&\left(\|\bb u_t\|^2_{\ld}+(\wspnd)\|\div\bb u\|_{\ld}^2+\mu\|\curl\bu\|_{\ld}^2+\|\theta\|^2_{\ld}\right)+\|\nabla\theta\|_{\ld}^2
\\ &\leq4\epsilon\|\nabla\theta\|_{\ld}^2+ C(\|\nabla^2\theta_1\|_{\ld}^2\|\bb u_t\|_{\ld}^2+\|\bb u_t\|_{\ld}^2+\|\theta\|_{\ld}^2).
\end{align}
Taking $\epsilon:=\frac18$ and denoting $y:=\|\bb u_t\|^2_{\ld}+(\wspnd)\|\div\bb u\|_{\ld}^2+\mu\|\curl\bb u\|_{\ld}^2+\|\theta\|^2_{\ld}$, we rewrite \eqref{razdwatrzy} as follows
\begin{align*}
y'\leq C y(1+\|\nabla^2\theta_1\|_{\ld}^2).
\end{align*}
We multiply the above inequality by $\operatorname{exp}\left(-C\int_0^t\|\nabla^2\theta_1\|_{\ld}^2ds-Ct\right)$ for $t\in(0,\infty)$ and obtain
\begin{align*}
\ddt\left(y \operatorname{exp}\left(-C\int_0^t\|\nabla^2\theta_1\|_{\ld}^2ds-Ct\right)\right)\leq 0.
\end{align*}
This entails that $y(t)=0$ for all $t\in [0,\infty)$.
\end{proof}

\section{Time-asymptotics of solutions}\label{czwarta}
In this section, we will investigate the asymptotics of solutions derived from Theorem \ref{existglm}. We will see that the displacement decomposes into two parts, which behave differently when $t\to\infty$. We will apply the Helmholtz decomposition on $\bu$. Then, it will separate into the potential part (also called the rotation-free part) and the divergence-free part. It turns out that the divergence-free part can be considered completely independently of the temperature and the potential part. Indeed, it will be shown that it satisfies the homogeneous wave equation. Consequently, it oscillates indefinitely.

On the other hand, the rotation-free part of the displacement and the temperature are coupled in the same system as \ref{system}. The potential part and $\theta$ behave in the same way as the heated string in \cite{BC2}. Namely, the curl-free part is influenced by the propagation of heat. It means that the whole energy, interlocked with the displacement, is dissipated into heat as $t\to\infty$.

\subsection{Decomposition of solutions}\label{subdec}

In this subsection, we apply the Helmholtz decomposition (Theorem \ref{thdecomp}) to \eqref{system}. Namely, we will demonstrate that the problem divides into two distinct issues.
\begin{tw}
Let $\bb u$ and $\theta$ be solutions of \eqref{system} in the sense of Definition \ref{soldef}. Then, $\bs\gamma:=H\bb u$ solves the following problem
\begin{equation}\label{eqdecnondiv}
\begin{cases}
\bs\gamma_{tt}-\mu\Delta \bs\gamma =0& \quad \text{ in } (0,\infty)\times \tn,\\
\bs\gamma\cdot\bb n=0,\ \N\bs\gamma=0&\quad\text{ in }(0,\infty)\times \partial\Omega,\\
\bs{\gamma}(0,\cdot)=\bs{\gamma}_0, \bs{\gamma}_t(0,\cdot) = \tilde{\bs\gamma}_0.&
\end{cases}
\end{equation}
where $\bs\gamma_0:=H\bb u_0$ and $\tilde{\bs\gamma}_0=H\bb v_0$. On the other hand, $\bs\chi:=\hp \bb u$ and $\theta$ solve
\begin{equation}\label{eqdecnoncurl}
\begin{cases}
\bs\chi_{tt}-(\wspnd)\Delta\bs\chi=-\nu\nabla\theta & \quad \text{ in } (0,\infty)\times \tn,\\
\theta_t - \Delta \theta =- \nu\theta \div{\bs\chi}_t& \quad \text{ in }  (0,\infty)\times\tn,\\
\bs\chi\cdot \bb n=0,\ \nabla\theta\cdot\bb n=0&\text{ on }(0,\infty)\times\partial\Omega,\\
\bs{\chi}(0,\cdot)=\bs{\chi}_0,\ \bs{\chi}_t(0,\cdot) = \tilde{\bs\chi}_0,\ \theta(0,\cdot)=\theta_0>0,&
\end{cases}
\end{equation}
where $\bs\chi_0:=\hp \bb u_0$ and  $\tilde{\bs{\chi}}_0:=\hp \bb v_0$. In addition, the regularity of both $\bs\gamma$ and $\bs\chi$ is equivalent to that of $\bu$ in \eqref{reggl}. Moreover, we have the following decomposition of the energy
\begin{align*}
\ul\io |\bs\gamma_t|^2dx+\frac\mu2\io |\curl \bs\gamma|^2dx=\ul\io |\tilde{\bs\gamma}_0|^2dx+\frac\mu2\io |\curl \gamma_0|^2dx
\end{align*}
and
\begin{align*}
\ul\io |\bs\chi_t|^2dx+\frac\wspnd2\io |\div\bs\chi|^2dx+\io\theta dx=\ul\io |\tilde{\bs\chi}_0|^2dx+\frac\wspnd2\io |\div\bs\chi_0|^2dx+\io\theta_0 dx.
\end{align*}
\end{tw}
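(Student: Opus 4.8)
The plan is to apply the Helmholtz decomposition from Theorem~\ref{thdecomp} directly to the displacement $\bu$, write $\bu = H\bu + \hp\bu = \bs\gamma + \bs\chi$, and then show that each equation in \eqref{system} separates cleanly along this decomposition. First I would recall the key algebraic facts: since $\bs\gamma = H\bu$ is divergence-free we have $\div\bs\gamma = 0$, so by the identity \eqref{identity} $\curl\curl\bs\gamma = -\Delta\bs\gamma$; and since $\bs\chi = \hp\bu = \nabla\phi$ is curl-free we have $\curl\curl\bs\chi = 0$, so $\nabla\div\bs\chi = \Delta\bs\chi$. The operators $H$ and $\hp$ commute with $\partial_t$ (they are time-independent projections), so $\bs\gamma_{tt} = H\bu_{tt}$ and $\bs\chi_{tt} = \hp\bu_{tt}$. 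The plan is then to apply $H$ and $\hp$ to the momentum equation. Since $\nabla\theta$ is a gradient, it lies entirely in the range of $\hp$, so $H(\nabla\theta) = 0$ and $\hp(\nabla\theta) = \nabla\theta$. Applying $H$ to the first equation in \eqref{system} and using $\div\bs\gamma = 0$ together with the curl-curl identity yields $\bs\gamma_{tt} - \mu\Delta\bs\gamma = 0$, giving \eqref{eqdecnondiv}; applying $\hp$ and using $\curl\curl\bs\chi = 0$ gives $\bs\chi_{tt} - (\wspnd)\Delta\bs\chi = -\nu\nabla\theta$, the first line of \eqref{eqdecnoncurl}. For the heat equation, I would note that $\div\bu_t = \div\bs\chi_t$ since $\div\bs\gamma_t = 0$, so the coupling term is unchanged, yielding the second line of \eqref{eqdecnoncurl}.

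The next step is the boundary conditions, and this is where the new conditions \eqref{ubound} must be shown to be inherited by each Helmholtz component, which I expect to be the main obstacle. The statements $\bs\gamma\cdot\bb n = 0$ and $\N\bs\gamma = 0$ on $\partial\Omega$ are not automatic from $\bu\cdot\bb n = \N\bu = 0$, since $H$ and $\hp$ are nonlocal operators. I would argue that the decomposition respects the boundary structure: because $H\bu$ is characterized by $\div H\bu = 0$ and $H\bu\cdot\bb n = 0$ (this is built into Theorem~\ref{thdecomp}), the condition $\bs\gamma\cdot\bb n = 0$ holds by construction. For $\bs\chi = \nabla\phi$, the condition $\bs\chi\cdot\bb n = \partial\phi/\partial\bb n$ then follows from $\bu\cdot\bb n = (\bs\gamma + \bs\chi)\cdot\bb n = 0$ and $\bs\gamma\cdot\bb n = 0$. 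The delicate point is the condition $\N\bs\gamma = 0$ (equivalently $\N\bs\chi = 0$ split off correctly): one must verify that $\curl\bu = \curl\bs\gamma$ (since $\curl\bs\chi = \curl\nabla\phi = 0$) so that the tangential curl condition $\N\bu = 0$ transfers verbatim to $\bs\gamma$. This identity $\curl\bs\chi = 0$ makes the transfer of $\N$ clean, and it is precisely here that choosing the decomposition to carry all the rotation into $\bs\gamma$ pays off.

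For the regularity claim, I would invoke the continuity of the Helmholtz projections on each $H^k(\tn)$ asserted in Theorem~\ref{thdecomp}: since $H$ and $\hp$ are bounded operators $H^k \to H^k$ for every $k$ and commute with time differentiation, applying them to the regularity class \eqref{reggl} for $\bu$ preserves every space in that list, so $\bs\gamma$ and $\bs\chi$ lie in the same class. This step is essentially bookkeeping once the boundedness of the projections is in hand.

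Finally, for the energy decomposition I would run the same multiplier argument as in Proposition~\ref{energy_cons} but on each subsystem separately. For \eqref{eqdecnondiv}, multiply by $\bs\gamma_t$ and integrate; the boundary integrals vanish by the conditions $\bs\gamma\cdot\bb n = \N\bs\gamma = 0$ exactly as in the proof of Proposition~\ref{energy_cons}, and since $\div\bs\gamma = 0$ the $\div$-term drops out, leaving the stated conservation of $\ul\io|\bs\gamma_t|^2 + \frac\mu2\io|\curl\bs\gamma|^2$. For \eqref{eqdecnoncurl}, multiply the first equation by $\bs\chi_t$ and integrate the heat equation over $\Omega$, then add; since $\curl\bs\chi = 0$ the curl-term drops out, and the coupling terms $\pm\nu\io\theta\div\bs\chi_t$ cancel exactly as in \eqref{lameq1}, yielding the second energy identity. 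A consistency check that I would note is that the $L^2$-orthogonality of $H\bu$ and $\hp\bu$ guarantees the two energies sum to the total energy \eqref{balance}, confirming the splitting is genuine.
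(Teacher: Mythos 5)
Your route is sound in outline but genuinely different from the paper's, and it has one unverified step at its crux. The paper does not apply the projections $H$ and $\hp$ to the momentum equation at all. Instead it writes $\bu=\bg+\bs\chi$, observes that the momentum equation rearranges to $\bg_{tt}+\mu\curl\curl\bg=-\bs\chi_{tt}+(\wspnd)\nabla\div\bs\chi-\nu\nabla\theta$, multiplies by $\bg_{tt}+\mu\curl\curl\bg$, and integrates over $\Omega$: every term on the right is a gradient (using $\bs\chi=\nabla\phi$), and after integration by parts all the resulting integrals vanish using only $\div\bg=0$, $\bg_{tt}\cdot\bb n=0$, $\curl\nabla=0$, and $\N\bg=0$ (which, as you correctly note, follows from $\curl\bu=\curl\bg$). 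This yields $\io|\bg_{tt}+\mu\curl\curl\bg|^2dx=0$ pointwise in time, hence the wave equation for $\bg$, and the $\bs\chi$-system follows by subtraction. So the paper's argument is a concrete $L^2$-orthogonality/multiplier computation, whereas yours is an abstract projection argument; both ultimately rest on the same boundary information, and yours is arguably cleaner conceptually.

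The gap in your version is the step ``applying $H$ to the first equation \ldots yields $\bg_{tt}-\mu\Delta\bg=0$.'' This requires not only $H(\nabla\div\bu)=H(\nabla\theta)=0$ (fine) but also $H(\curl\curl\bu)=\curl\curl\bg$, i.e.\ that the divergence-free field $\curl\curl\bg$ is \emph{fixed} by $H$. By Theorem \ref{thdecomp} that holds only if $\curl\curl\bg\cdot\bb n=0$ on $\partial\Omega$; a divergence-free field with nonzero normal trace still has a nontrivial gradient part (the harmonic correction), and without this check your projected equation only gives $\bg_{tt}-\mu\Delta\bg=\mu\nabla\psi$ for some harmonic $\psi$. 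The missing fact is true: since $\N\bg=\curl\bg\times\bb n=0$ on $\partial\Omega$, the tangential trace of $\curl\bg$ vanishes on the closed surface $\partial\Omega$, and the normal component of the curl of a field depends only on its tangential trace, so $\curl\curl\bg\cdot\bb n=0$. You should state and prove this (or switch to the paper's multiplier argument, which sidesteps it by moving one $\curl$ onto $\nabla\theta$ and using $\N\bg=0$ directly in the boundary term). The rest of your proposal --- the transfer of the boundary conditions, the heat equation via $\div\bu_t=\div\bs\chi_t$, the regularity via boundedness of the projections on $H^k$, and the two energy identities obtained by running the argument of Proposition \ref{energy_cons} on each subsystem --- matches the paper and is correct.
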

\begin{proof}
Let us deal with the wave equation from \eqref{system}. We multiply it by $\bs\gamma_{tt}+\mu\curl\curl\bs\gamma$ and integrate over $\tn$. We obtain
\begin{align*}
\io &|\bg_{tt}+\mu\curl\curl\bg|^2dx+\io(\bs\chi_{tt}-(\wspnd)\nabla\div\bs\chi)\cdot (\bg_{tt}+\mu\curl\curl\bg)dx\\
&=-\nu\io\nabla\theta\cdot(\bg_{tt}+\mu\curl\curl\bg)dx=-\nu\io\nabla\theta\cdot\bg_{tt}-\nu\mu\io\nabla\theta\cdot\curl\curl\bg dx.
\end{align*}
We integrate by parts the integrals on the right-hand side of the above equality. The first integral disappears easily. Let us note that $\curl\bu=\curl\bg$. It implies that $\N\bg=0$ on $\partial\Omega$. Thus, in the second integral, we switch $\curl$ from $\curl\curl\gamma$ onto $\nabla\theta$. This results in the integral also vanishing.

Let us consider the second integral on the left-hand side. We know that $\bs\chi=\hp\bb u$. Thus, there exists a function $\phi$ such that $\nabla\phi=\bs\chi$. Therefore, it yields
\begin{align*}
\io&(\bs\chi_{tt}-(\wspnd)\nabla\div\bs\chi)\cdot (\bg_{tt}-\mu\curl\curl\bg)dx=\io(\nabla\phi_{tt}-(\wspnd)\nabla\Delta\phi)\cdot (\bg_{tt}-\mu\curl\curl\bg)dx.
\end{align*}
We integrate by parts the integrals on the right-hand side of the above equality. It results in its disappearance. Thus, we have that
\begin{align*}
\io |\bg_{tt}-\mu\curl\curl\bg|^2dx=0.
\end{align*}
This, with the formula $\curl\curl\bg=\Delta\bg$, implies that $\bg$ satisfies \eqref{eqdecnondiv}.

We know that $\div \bu=\div\bs\chi$. It shows that the heat equation from \eqref{eqdecnoncurl} is satisfied. The upper is achieved when we subtract \eqref{eqdecnondiv} from \eqref{system}. We also utilize the equality $\nabla\div\bs\chi=\Delta\bs\chi$. Then, the energy decomposition follows in the same way as the proof of Proposition \ref{energy_cons}.
\end{proof}

\begin{rem}
The boundary condition for $\bs\chi$ \eqref{eqdecnoncurl}, i.e. $\bs\chi\cdot\bb n=0$ on $(0,\infty)\times\partial\Omega$, seems to be strange. However, we can rewrite it for $\phi$ such that $\nabla\phi=\bs\chi$. Then, the whole problem presents as follows
\begin{align*}
\begin{cases}
\nabla\phi_{tt}-(2\mu+\lambda)\nabla\Delta\phi =-\nu\nabla\theta & \quad \text{ in } (0,\infty)\times \tn,\\
\theta_t - \Delta \theta =- \nu\theta \Delta\phi_t& \quad \text{ in }  (0,\infty)\times\tn,\\
\nabla\phi\cdot \bb n=0,\ \nabla\theta\cdot\bb n=0&\quad\text{ on }(0,\infty)\times\partial\Omega,\\
\phi(0,\cdot)=\psi,\ \phi_t(0,\cdot) = \tilde{\psi},\ \theta(0,\cdot)=\theta_0>0,&
\end{cases}
\end{align*}
where $\psi,\tilde\psi$ we take from $\nabla\psi=\bs\chi_0$ and $\nabla\tilde\psi=\tilde{\bs\chi}_0$. Therefore, we see that $\phi$ satisfies the Neumann boundary condition.
\end{rem}

\subsection{Displacement and temperature at infinity}\label{subas}

From the previous section, we know that the displacement decomposes into two different vector fields
\begin{align*}
\bu=\bs\gamma+\bs\chi.
\end{align*}
It occurs that $\bs\gamma$ and $\bs\chi$ behave differently when $t\to\infty$. We start by noticing that the non-divergence part of $\bu$ (i.e., $\bg$) oscillates unless it is zero (cf.\cite{Bies}). 
\begin{prop}
Let $\bu$ and $\theta$ be solutions of \eqref{system}. Then,
\begin{align}\label{connondiv}
\bg:=H\bu\to 0\textrm{ in } \ld, \textrm{ when } t\to\infty
\end{align}
if and only if $\bg$ is equal to $0$.
\end{prop}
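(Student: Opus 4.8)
The implication ``$\bg\equiv 0\Rightarrow\bg(t,\cdot)\to 0$ in $\ld$'' is trivial, so the whole content lies in the converse. The plan is to expand $\bg$ in the orthonormal $\ld$-basis $\{\bs\phi_k\}$ of eigenfunctions of $-\Delta$ with the boundary conditions \eqref{ubound} constructed in Subsection \ref{sublap}, and to show that the resulting time-coefficients are genuine, non-decaying oscillations. Concretely, I would set $g_k(t):=(\bg(t,\cdot),\bs\phi_k)_{\ld}$, so that by Parseval $\|\bg(t,\cdot)\|_{\ld}^2=\sum_k g_k(t)^2$, and the hypothesis $\bg(t,\cdot)\to 0$ in $\ld$ gives $|g_k(t)|\le\|\bg(t,\cdot)\|_{\ld}\to 0$ for every fixed $k$.

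Next I would derive the evolution of each $g_k$. Starting from \eqref{eqdecnondiv} and integrating by parts against $\bs\phi_k\in\H$, the boundary terms vanish precisely because of $\bg\cdot\bb n=0$ and $\N\bg=0$ (together with $\bs\phi_k\in\H$), yielding the weak form $(\bg_{tt},\bs\phi_k)_{\ld}+\mu B(\bg,\bs\phi_k)=0$ with $B$ as in \eqref{scprod}. Since $\div\bg=0$, the divergence part of $B$ drops out, and the eigenrelation $B(\bg,\bs\phi_k)=\xi_k(\bg,\bs\phi_k)_{\ld}$ turns this into the decoupled harmonic-oscillator equation $g_k''(t)+\mu\xi_k g_k(t)=0$. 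The regularity \eqref{reggl}, in particular $\bg_{tt}\in L^\infty(0,\infty;\ld)$, makes each $g_k$ of class $W^{2,\infty}$ and legitimizes differentiating under the inner product.

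The structural point is that every eigenvalue is strictly positive: coercivity of $B$ on $\H$, recorded in Subsection \ref{sublap}, gives $\xi_k=B(\bs\phi_k,\bs\phi_k)/\|\bs\phi_k\|_{\ld}^2>0$, so no zero or linearly growing modes occur. Setting $\omega_k:=\sqrt{\mu\xi_k}>0$, each coefficient is $g_k(t)=A_k\cos(\omega_k t)+B_k\sin(\omega_k t)=\sqrt{A_k^2+B_k^2}\,\cos(\omega_k t-\psi_k)$, an oscillation of constant amplitude, so $\limsup_{t\to\infty}|g_k(t)|=\sqrt{A_k^2+B_k^2}$. Combined with $g_k(t)\to 0$ this forces $A_k=B_k=0$, hence $g_k\equiv 0$ for all $k$, and Parseval then yields $\bg\equiv 0$.

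I expect no serious obstacle here: the heart of the matter, that a fixed-frequency oscillation of constant amplitude cannot decay, is elementary once the mode decomposition is in place. The only care needed is bookkeeping the vanishing of the boundary integrals when testing \eqref{eqdecnondiv} against $\bs\phi_k$ — which is exactly where $\N\bg=0$ and $\bg\cdot\bb n=0$ are used — and invoking the positivity of the $\xi_k$ from the coercivity of $B$; both are already available from Subsection \ref{sublap}.
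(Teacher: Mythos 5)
Your proposal is correct and follows essentially the same route as the paper: expand $\bg$ in the eigenbasis $\{\bs\phi_k\}$ from Subsection \ref{sublap}, reduce \eqref{eqdecnondiv} to the decoupled oscillators $d_k''+\mu\xi_k d_k=0$, and use $\xi_k>0$ to conclude that a mode converging to $0$ must vanish identically. You merely spell out the constant-amplitude argument and the vanishing of boundary terms in more detail than the paper does.
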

\begin{proof}
It is obvious that \eqref{connondiv} is satisfied when $\bg=0$.

Now, let us assume that \eqref{connondiv} is valid. Let us remind that $\{\bs\phi_k\}$ is an orthonormal basis of $\ld$ consisting of eigenfunctions of $-\Delta$ with the boundary condition as in \eqref{probeig}. It is, in addition, orthogonal in $\H$. Let us also denote the sequence of eigenvalues of $-\Delta$ on $\tn$ as $\{\xi_k\}$. Hence, we can write $\bg=\sum_{k=1}^{\infty} d_k \bs\phi_k$. Equation \eqref{eqdecnondiv} implies that $d_k$ is a solution of the following problem
\begin{align*}
\begin{cases}
d_k''+\mu\xi_kd_k=0&\textrm{on } [0,\infty),\\
d_k(0)=(\bg_0,\bs\phi_k)_{\ld},\ d_i'(0)=(\tilde\bg_0,\bs\phi_i)_{\ld}.
\end{cases}
\end{align*}
Condition \eqref{connondiv} implies that $d_k=(\bg,\bs\phi_k)_{\ld}\to 0$, when $t\to\infty$. Because $\xi_k>0$, it could happen only if $d_k=0$. It means that $\bg=0$.
\end{proof}

Let us now study the asymptotics of the potential part of $\bu$ (i.e., $\bs\chi$) and the temperature. Our considerations are similar to those in \cite{BC2, Biest}. Nevertheless, certain details need to be converted. We begin with the following proposition regarding the stability of solutions.
\begin{prop}\label{twcontdep}
Let us take two triples of the initial values $\bu_{0,1},\bu_{0,2}\in \H\cap\htwo, \bb v_{0,1},\bb v_{0,2}\in\H$ and $\theta_{0,1},\theta_{0,2}\in\hom\cap L^{\infty}(\tn)$. Moreover, we  require that there exists $\tilde\theta_1>0$ and $\tilde\theta_2>0$ such that $\tilde\theta_1\leq\theta_{0,1}$ and $\tilde\theta_2\leq\theta_2$.
Let us also assume that $(\bu_1,\theta_1)$ is a solution of \eqref{system} starting from $\bu_{0,1},\bb v_{0,1},\theta_{0,1}$ and $(\bu_2,\theta_2)$ is a
solution starting from $\bu_{0,2},\bb v_{0,2},\theta_{0,2}$. Both of the solutions are considered in the sense of Definition \ref{soldef}. Then, for all $t\in(0,\infty)$ there exists the constant $C>0$  such that
\begin{align*}
&\|\bu_1(t,\cdot)-\bu_2(t,\cdot)\|_{\H}+\|\bu_{1,t}(t,\cdot)-\bu_{2,t}(t,\cdot)\|_{\ld}+\|\theta_1(t,\cdot)-\theta_2(t,\cdot)\|_{\ld}\\
&\quad\leq C\left(\|\bu_{0,1}-\bu_{0,2}\|_{\H}+\|\bb v_{0,1}-\bb v_{0,2}\|_{\ld}+\|\theta_{0,1}-\theta_{0,2}\|_{\ld}\right)
\end{align*}
and $C=C(t,d,\mu,\lambda,|\nu|, \|\nabla \bu_{1,t}\|_{L^{\infty}(0,\infty;\ld)},\|\theta_2\|_{L^{\infty}((0,\infty)\times\tn)},\|\nabla \theta_2\|_{L^{\infty}(0,\infty;\ld)}, \|\nabla^2\theta_2\|_{L^2(0,t;\ld)})$.
\end{prop}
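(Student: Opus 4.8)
The plan is to take the difference of the two solutions and run the same weighted energy estimate used in the uniqueness proof, but now keeping the initial data on the right-hand side rather than assuming it vanishes. Set $\bb u := \bb u_1 - \bb u_2$ and $\theta := \theta_1 - \theta_2$. Subtracting the respective momentum and entropy equations gives a system of exactly the form appearing in the uniqueness theorem, namely
\begin{align*}
\bb u_{tt}-(\wspnd)\nabla\div\bb u +\mu\curl\curl\bb u&= -\nu\nabla\theta,\\
\theta_t - \Delta \theta &= -\nu\theta_1 \div\bb u_{t}-\nu\theta\div\bb u_{2,t}.
\end{align*}
First I would record, exactly as in the uniqueness proof, that Theorem~\ref{upbofortemp} applies to each solution individually, so $\theta_1,\theta_2\in L^{\infty}(\ot)$; this supplies the $L^\infty$ bound on $\theta_1$ that the estimates require.

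Next I would test the momentum equation with $\bb u_t$ and the heat equation with $\theta$, integrate over $\Omega$, and add. This reproduces inequality \eqref{razdwatrzy} verbatim, since the algebra of the difference system is identical; the only new feature is that we now track the initial values through the Grönwall step. With $y:=\|\bb u_t\|^2_{\ld}+(\wspnd)\|\div\bb u\|_{\ld}^2+\mu\|\curl\bb u\|_{\ld}^2+\|\theta\|^2_{\ld}$ and $\epsilon=\tfrac18$, the estimate collapses to
\begin{align*}
y'\leq C y\bigl(1+\|\nabla^2\theta_1\|_{\ld}^2\bigr).
\end{align*}
Grönwall's inequality then yields $y(t)\leq y(0)\exp\bigl(Ct+C\int_0^t\|\nabla^2\theta_1\|_{\ld}^2\,ds\bigr)$, and the integral in the exponent is finite because of the local $L^2(0,t;\htwo)$ regularity of $\theta_1$ recorded in \eqref{reggl}. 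Since $y(0)$ is comparable (via the norm equivalence for $\|\cdot\|_\H$ noted after Theorem~\ref{prexpr}, and the control of $\curl$ and $\div$ by the full $\H$-norm) to $\|\bb u_{0,1}-\bb u_{0,2}\|_{\H}^2+\|\bb v_{0,1}-\bb v_{0,2}\|_{\ld}^2+\|\theta_{0,1}-\theta_{0,2}\|_{\ld}^2$, taking square roots gives the claimed bound, with the constant $C$ depending on $t$ and on the listed norms.

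The only genuinely delicate point is bookkeeping of which solution supplies which coefficient, and consequently which norms enter the constant $C$. The term $I_{12}$ forces a $\|\nabla^2\theta_1\|_{\ld}$ factor into the Grönwall weight, while the term $I_2$ contributes $\|\div\bb u_{2,t}\|_{L^\infty(0,\infty;\ld)}$ and the $L^\infty$-bound on $\theta_2$; one must be careful that the proposition's stated constant list matches these choices (so that the roles of subscripts $1$ and $2$ are consistent throughout). Once the estimate is set up with $\theta_1$ playing the role the uniqueness proof assigns it, no new analytic obstacle arises—everything reduces to the same Gagliardo--Nirenberg and Young inequalities already deployed, together with a single application of Grönwall's lemma.
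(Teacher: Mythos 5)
Your proposal follows essentially the same route as the paper: take the difference of the two solutions, test the momentum equation with $\bb u_t$ and the heat equation with $\theta$, estimate the cross terms by Gagliardo--Nirenberg and Young, and close with Grönwall while keeping $y(0)$ on the right-hand side. The one discrepancy is the splitting of the nonlinear term: you reuse the uniqueness proof's decomposition $-\nu\theta_1\div\bb u_t-\nu\theta\div\bb u_{2,t}$, which makes the constant depend on $\|\nabla^2\theta_1\|_{L^2(0,t;\ld)}$, $\|\theta_1\|_{L^\infty(\ot)}$ and $\|\div\bb u_{2,t}\|_{L^\infty(0,\infty;\ld)}$, whereas the paper splits as $-\nu\theta\div\bb u_{1,t}-\nu\theta_2\div\bb u_t$ to obtain exactly the dependencies listed in the statement; since the two solutions play symmetric roles this is only a relabeling, and you flag the bookkeeping issue yourself.
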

\begin{proof}
Let us denote $\bu:=\bu_1-\bu_2$ and $\theta:=\theta_1-\theta_2$. We subtract the equations for $(\bu_2,\theta_2)$ from the equations for $(\bu_1,\theta_1)$ and
obtain
\begin{align}\label{uklpom}
\begin{cases}
\bu_{tt}-(\wspnd)\nabla\div\bu+\mu\curl\curl\bu=-\nu\nabla\theta,\\
\theta_t-\Delta\theta=-\nu\theta \div \bu_{1,t}-\mu \theta_2\div \bu_{t}.
\end{cases}
\end{align}
We multiply by $\bu_t$ the first of the above equations and arrive at
\begin{align}\label{glrow1}
\ul\ddt\left(\io \bu_t^2dx+(\wspnd)\io (\div \bu)^2dx+\mu\io |\curl \bu|^2dx \right)&\leq-\nu\io \bu_t\cdot\nabla\theta dx\nonumber\\
&\leq C\|\bu_t\|_{\ld}^2+\epsilon\|\nabla \theta\|^2_{\ld}.
\end{align}
Whereas, multiplying the second equation in \eqref{uklpom} by $\theta$, we obtain
\begin{align}\label{glrow2}
\nonumber\ul\ddt&\io\theta^2 dx+\io|\nabla\theta|^2 dx=-\nu\io\theta^2\div \bu_{1,t}dx-\nu\io\theta\theta_2\div \bu_{t}dx\\
&=-\nu\io\theta^2\div \bu_{1,t}dx+\nu\io\theta\nabla\theta_2\cdot \bu_{t}dx+\mu\io\theta_2\nabla\theta\cdot \bu_{t}dx=I_1+I_2+I_3.
\end{align}

We bound the integral $I_1$ using the Gagliardo-Nirenberg inequality and then the Young inequality. We get
\begin{align}\label{inconde1}
I_1\leq |\nu|\|\div \bu_{1,t}\|_{L^{\infty}(0,T;\ld)}\|\theta\|_{L^4(\tn)}^2\leq \epsilon\|\nabla\theta\|^2_{\ld}+C\|\theta\|_{\ld}^2.
\end{align}
The integral $I_{3}$ is estimated using the Schwarz inequality and the Young inequality. We also utilize that $\theta_2\in L^{\infty}(\ot)$ for the class of solutions given in Definition \ref{soldef}.
\begin{align}\label{inconde2}
I_{3}\leq |\nu|\|\theta_2\|_{L^{\infty}((0,T)\times\tn)}\|\nabla\theta\|_{\ld}\|\bu_t\|_{\ld}\leq \epsilon\|\nabla\theta\|_{\ld}^2+C\|\bu_t\|_{\ld}^2.
\end{align}

Let us proceed with $I_{2}$. We use the Gagliardo-Nirenberg inequality and the Young inequality.
\begin{align*}
I_{2}&\leq |\nu| \|\theta\|_{L^4(\tn)}\|\nabla\theta_2\|_{L^4(\tn)}\|\bu_t\|_{\ld}\\
&\leq C(\|\nabla\theta\|_{\ld}+\|\theta\|_{\ld})(\|\nabla^2\theta_2\|_{\ld}+\|\nabla\theta_2\|_{\ld})\|\bu_t\|_{\ld}.
\end{align*}
Next, after the application of the Young inequality, we get
\begin{align}\label{inconde3}
I_{2}\leq \epsilon\|\nabla\theta\|_{\ld}^2+ C(\|\nabla^2\theta_2\|_{\ld}^2\|\bu_t\|_{\ld}^2+\|\bu_t\|_{\ld}^2+\|\theta\|_{\ld}^2).
\end{align}

We plug \eqref{inconde1}, \eqref{inconde2} and \eqref{inconde3} into \eqref{glrow2}. The obtained inequality we add to \eqref{glrow1}. It leads us to
\begin{align*}
\ul\ddt&\left(\|\bu_t\|^2_{\ld}+(\wspnd)\|\div \bu\|_{\ld}^2+\mu\|\curl \bu\|_{\ld}^2+\|\theta\|^2_{\ld}\right)+\|\nabla\theta\|_{\ld}^2
\\ &\leq4\epsilon\|\nabla\theta\|_{\ld}^2+ C(\|\nabla^2\theta_2\|_{\ld}^2\|\bu_t\|_{\ld}^2+\|\bu_t\|_{\ld}^2+\|\theta\|_{\ld}^2).
\end{align*}
We take $\epsilon:=\frac18$.

Let us denote $y:=\|\bu_t\|^2_{\ld}+(\wspnd)\|\div \bu\|_{\ld}^2+\mu\|\curl \bu\|_{\ld}^2+\|\theta\|^2_{\ld}$. Then, we rewrite the above inequality as follows
\begin{align*}
y'\leq C y(1+\|\nabla^2\theta_2\|_{\ld}^2).
\end{align*}
We multiply the above inequality by $\operatorname{exp}\left(-C\int_0^s\|\nabla^2\theta_2\|_{\ld}^2dz-Cs\right)$ for $s\in(0,t]$. We obtain
\begin{align*}
\frac{d}{ds}\left(y \operatorname{exp}\left(-C\int_0^s\|\nabla^2\theta_2\|_{\ld}^2dz-Cs\right)\right)\leq 0.
\end{align*}
We integrate over $[0,t]$ and see that
$$
y(t)\leq y(0)\operatorname{exp}\left(C\int_0^t\|\nabla^2\theta_2\|_{\ld}^2ds+Ct\right).
$$
It gives the thesis.
\end{proof}

We denote
\begin{align*}
\M:=\left\{\eta\in \hom\cap L^{\infty}(\Omega)\colon \operatorname*{ess\, inf}_{x\in\tn}\eta(x)>0\right\}.
\end{align*}
Let us take $t\geq 0$. We define an operator
\begin{align*}
S(t):\left(\H\cap\htwo\right)\times\H\times\M\to \left(\htwo\cap\H\right)\times\H\times\M
\end{align*}
by formula
\begin{align*}
S(t)(\bu_0,\bb v_0,\theta_0)=(\bu(t),\bu_t(t),\theta(t)),
\end{align*}
where $\bu$ and $\theta$ are solutions of \eqref{system} with initial values $\bu_0, \bb v_0$ and $\theta_0$. Thanks to the uniqueness of the solutions of the system, we obtain
\begin{align*}
S(t+h)=S(h)\circ S(t)
\end{align*}
for all $t,h\in [0,\infty)$.

Now, we are ready to prove the main result concerning the long-time behavior of solutions. Let us remind that $\bs\chi=\hp \bu$, $\bs\chi_0=\hp \bu_0$, $\bs{\tilde\chi}_0=\hp\bb v_0$.
\begin{proof}[The proof of Theorem \ref{twassm}]
Let us take a sequence $t_n\in [0,\infty)$ such that $t_n\to\infty$. We consider the sequences $\bs\chi(t_n,\cdot)$, $\bs\chi_t(t_n,\cdot)$ and
$\theta(t_n,\cdot)$. Due to the very Definition \ref{soldef}, we know that
\begin{align}\label{cont}
\begin{split}
\bs\chi\in C([0,\infty);&\H),\ \chi_t \in C([0,\infty);\ld),\\
\theta& \in C([0,\infty);\hom).
\end{split}
\end{align}
Hence, we see that the sequence $\theta(t_n,\cdot)$ is bounded in $\hom$. Nevertheless, about the potential part, we know only that
\begin{align*}
\bs\chi\in L^{\infty}(0,\infty;\htwo),\quad \bs\chi_t\in L^{\infty}(0,\infty,\H).
\end{align*}

Therefore, there exists a measurable set $\T\subset [0,\infty)$ such that $\left|[0,\infty)\setminus\T\right|=0$ and $\bs\chi|_{\T}$, $\bs\chi_t|_{\T}$ are bounded
functions with values in $\htwo$ and $\H$, respectively.
However, if we show
\begin{align*}
\lim_{\substack{t\to\infty\\t\in\T}}\bs\chi(t,\cdot)=0\textrm{ in }\H
\end{align*}
and
\begin{align*}
\lim_{\substack{t\to\infty\\t\in\T}}\bs\chi_t(t,\cdot)=0\textrm{ in }\ld,
\end{align*}
then \eqref{cont} will imply that
\begin{align*}
\lim_{t\to\infty}\bs\chi(t,\cdot)=0\textrm{ in }\H,
\end{align*}
and
\begin{align*}
\lim_{t\to\infty}\bs\chi_t(t,\cdot)=0\textrm{ in }\ld.
\end{align*}
The details are left to the reader.

Hence, we consider $\bs\chi(t_n,\cdot)$ and $\bs\chi_t(t_n,\cdot)$ as bounded in $\htwo$ and $\H$, respectively. We also know that $\theta(t_n,\cdot)$ is
bounded in $\hom$. Thus, we have a subsequence of $t_n$ (still denoted as $t_n$) such that
\begin{align*}
\bs\chi(t_n,\cdot)&\rightharpoonup \tilde{\bs\chi}\textrm{ in }\htwo,\nonumber\\
\bs\chi_t(t_n,\cdot)&\rightharpoonup\tilde{\bb v}\textrm{ in }\H,\\
\theta(t_n,\cdot)&\rightharpoonup\theta_{\infty}\textrm{ in }\hom,\nonumber
\end{align*}
where $\tilde{\bs\chi}\in\htwo$, $\tilde{\bb v}\in\H$ and $\theta_{\infty}\in\hom$. We know that $\bs\chi(t_n,\cdot)\in\hp(\htwo)$ and $\bs\chi_t(t_n,\cdot)\in\hp(\H)$ for all $n$. Because $\hp(\htwo)$ and $\hp(\H)$ are convex and closed subsets of $\htwo$ and  $\H$, respectively, we obtain that $\tilde{\bs\chi}\in\hp(\htwo)$ and $\tilde{\bb v}\in\hp(\H)$. By compact embeddings, we know that
\begin{align*}
\begin{split}
\bs\chi(t_n,\cdot)&\to \tilde{\bs\chi}\textrm{ in }\H,\\
\bs\chi_t(t_n,\cdot)&\to\tilde{\bb v}\textrm{ in }\ld,\\
\theta(t_n,\cdot)&\to\theta_{\infty}\textrm{ in }\ld.
\end{split}
\end{align*}
By virtue of Theorem \ref{estsol} and assumption \eqref{assyassu}, we have that 
\begin{align*}
\F(\bs\chi(t_n,\cdot),\bs\chi_t(t_n,\cdot),\theta(t_n,\cdot))\leq \F(\bu(t_n,\cdot),\bu_t(t_n,\cdot),\theta(t_n,\cdot))\leq D
\end{align*}
for all $n\in\n$. This implies that
\begin{align*}
\F(\tilde{\bs\chi},\tilde{\bb v},\theta_{\infty})\leq D.
\end{align*}
Therefore, Theorem \ref{existglm} ensures that there exist solutions $\bar{\bs\chi}$ and
$\bar\theta$ of \eqref{system} with the initial values $\tilde{\bs\chi}$, $\tilde{\bb v}$ and $\theta_{\infty}$.

Let us denote $\bar{\bs\gamma}:=H\bar{\bs\chi}$. Then, the function $\bar{\bs\gamma}$ is a solution of the following problem
\begin{align*}
\begin{cases}
\bar{\bs\gamma}_{tt}-\mu\Delta\bar{\bs\gamma}=0 &\text{ in }\ot,\\
\bar{\bs\gamma}\cdot\bb n=0,\ \N\bar{\bs\gamma}=0 &\text{ on } (0,\infty)\times\partial\Omega,\\
\bar{\bs\gamma}(0,\cdot)=0,\ \bar{\bs\gamma}_t(0,\cdot)=0.
\end{cases}
\end{align*}
This entails that
\begin{align*}
\ul\ddt\left(\io|\bar{\bs\gamma}_t|^2dx+\mu\io|\curl\bar{\bs\gamma}|^2dx\right)=0
\end{align*}
Thus, we have that $\bar{\bs\gamma}=0$ and then $\bar{\bs\chi}\in\hp(\htwo)$. Consequently, there exists a function $\phi$ such that $\bar{\bs\chi}=\nabla\phi$ and $\io\phi dx=0$.

Subsequently, we take $h>0$ and consider the sequences $\bs\chi(t_n+h,\cdot)$, $\bs\chi_t(t_n+h,\cdot)$ and $\theta(t_n+h,\cdot)$. Similarly, as above, there exists
$\hat{\bs\chi}\in\htwo\cap\H$, $\hat{\bb v}\in\H$ and $\hat\theta\in\hom$ such that
\begin{align*}
\bs\chi(t_n+h,\cdot)&\to \hat{\bs\chi}\textrm{ in }\H,\\
\bs\chi_t(t_n+h,\cdot)&\to\hat{\bb v}\textrm{ in }\ld,\\
\theta(t_n+h,\cdot)&\to\hat{\theta}\textrm{ in }\ld.
\end{align*}
If it is necessary, we take the subsequence of $t_n$ in the above convergences. By Proposition \ref{twcontdep}, we obtain
\begin{align*}
(\hat{\bs\chi},\hat{\bb
v},\hat\theta)&=\lim_{n\to\infty}(\bs\chi(t_n+h,\cdot),\bs\chi(t_n+h,\cdot),\theta(t_n+h,\cdot))=\lim_{n\to\infty}S(h)(\bs\chi(t_n,\cdot),\bs\chi_t(t_n,\cdot),\theta(t_n,\cdot))\\
&=S(h)(\tilde{\bs\chi},\tilde{\bb v},\theta_{\infty})=(\bar{\bs\chi}(h,\cdot),\bar{\bs\chi}_t(h,\cdot),\bar\theta(h,\cdot)).
\end{align*}

Whereas, by Proposition \ref{ogrzprop}, we know that a function
\begin{align*}
t\mapsto\io\log\theta(t,x)dx
\end{align*}
is non-decreasing. In view of \eqref{tezexglm}, it is bounded. Hence, it is convergent when time goes to $\infty$. Thus, sequences $\io\log\theta(t_n,x)dx$ and $\io\log\theta(t_n+h,x)dx$ must be convergent to the same number. Moreover,
\begin{align}\label{piec}
\nonumber\io\log\theta_{\infty} dx&=\lim_{n\to\infty}\io\log\theta(t_n,x)dx=\lim_{n\to\infty}\io\log\theta(t_n+h,x)dx\\
&=\io\log\bar\theta(h,x)dx.
\end{align}
The equalities for limits can be concluded from the inequality
\begin{align}\label{inhel}
|\log y_1-\log y_2|\leq\frac1c|y_1-y_2| \text{ for all }y_1,y_2\in[c,\infty),
\end{align}
where $c>0$. Inequality \eqref{tezexglm} gives us that $\theta$, $\theta_{\infty}$, and $\bar\theta$ are bounded away from $0$. As the constant $c$, in \eqref{inhel}, we take the bound from below for $\theta$, $\theta_{\infty}$, and $\bar\theta$.  Then, inequality \eqref{inhel} implies
\begin{align*}
\left|\io\log\theta(t_n,x)dx-\io\log\theta_{\infty} dx\right|\leq \frac1c\|\theta(t_n,\cdot)-\theta_{\infty}\|_{L^1(\tn)}\leq C\|\theta(t_n,\cdot)-\theta_{\infty}\|_{\ld}
\end{align*}
and similarly
\begin{align*}
\left|\io\log\theta(t_n+h,x)dx-\io\log\bar\theta(h,x) dx\right|\leq C\|\theta(t_n+h,\cdot)-\bar\theta(h,\cdot)\|_{\ld}
\end{align*}
Thus, we see that \eqref{piec} is got from limits $\theta(t_n,\cdot)\to\theta_{\infty}$ and $\theta(t_n+h,\cdot)\to\bar\theta(h,\cdot)$ in $\ld$.

Because $h>0$ was arbitrary, we notice that \eqref{piec} states that
\begin{align*}
t\mapsto\io\log\bar\theta(t,x)dx
\end{align*}
is constant. Thus, we obtain that the integral of entropy is constant at the $\omega$-limit set.  Afterwards, \eqref{sepri} infers
\begin{align*}
0=\ddt\io\log\bar\theta dx=\io\frac{|\nabla \bar\theta|^2}{\bar\theta^2}dx.
\end{align*}
Hence,
\[
\nabla \bar{\theta}=0 \quad\text{ for all }\quad(t,x)\in\ot.
\]
It entails that, for each fixed $t>0$, $\bar{\theta}(t,\cdot)$ is constant in space. Next, again we make use of \eqref{sepri}.
\begin{align*}
0=\ddt\io\log\bar\theta dx=\ddt\log\bar\theta =\frac{\bar\theta_t}{\bar\theta}.
\end{align*}
It means that $\bar\theta$ is also constant in time. Therefore, $\bar{\theta}$ is constant in space and time.

So far, we inferred that the $\omega$-limit set of a solution of \eqref{system} consists only of constant functions $\theta_{\infty}$. At the end of the proof, we will see that there is only one limit temperature. Next, we identify all potential limits of the curl-free part of the displacement. Since $\bar{\theta}$ is constant, we have
\begin{align}\label{szesc}
\begin{cases}
\bar{\bs\chi}_{tt}-(\wspnd)\Delta\bar{\bs\chi}=0& \text{ in }\ot,\\
0=\nu\bar \theta\div\bar{\bs\chi}_{t}&\text{ in }\ot,\\
\bar{\bs\chi}\cdot\bb n=0,&\text{ on }(0,\infty)\times\partial\Omega
\end{cases}
\end{align}

The second equation and the boundary condition in the above system show $\phi_t$ (let us remind that $\nabla\phi=\bar{\bs\chi}$) is a solution of the following boundary problem  
\begin{align*}
\begin{cases}
\Delta\phi_t=0 & \text{ in }\ot,\\
\nabla\phi_t\cdot\bb n=0&\text{ on }(0,\infty)\times\partial\Omega,\\
\io\phi_tdx=0.
\end{cases}
\end{align*}
Therefore, the function $\phi_t=0$ on $\ot$. Hence, $\bar{\bs\chi}_t=\nabla\phi_t=0$ on $\ot$ and we obtain $\bar{\bs\chi}$ is constant in time. This, together with the first equation in \eqref{szesc}, states that $\Delta\bar{\bs\chi}=0$. Because $\bar{\bs\chi}=\nabla\phi$, we know that $\curl\bar{\bs\chi}=0$. It results that $\bar{\bs\chi}$ solves
\begin{align*}
\begin{cases}
\Delta\bar{\bs\chi}=0 & \text{ in }\ot,\\
\bar{\bs\chi}\cdot\bb n=0,\ \N\bar{\bs\chi}=0&\text{ on }(0,\infty)\times\partial\Omega.
\end{cases}
\end{align*}
Theorem \ref{prexpr} yields that $\bar{\bs\chi}=0$ on $\ot$. Thus, we have that $\tilde{\bs\chi}=0$, $\tilde{\bb v}=0$, and $\theta_{\infty}$ is constant.
Finally, the conservation of energy (Proposition \ref{energy_cons}) implies that
\begin{align*}
\frac12&\io |\tilde{\bs\chi}_0|^2dx+\frac12\io |\nabla\bs\chi_0|^2dx+\io\theta_0 dx\\
&=\lim_{n\to\infty}\left(\frac12\io |\bs\chi_t(t_n,x)|^2dx+\frac12\io |\nabla\bs\chi(t_n,x)|^2dx+\io\theta(t_n,x) dx\right)\\
&=\io\theta_{\infty} dx=\theta_{\infty}|\tn|.
\end{align*}
At the end, let us notice that the time-limit of $(\bs\chi,\bs\chi_t,\theta)$ does not depend on the taken sequence $\{t_n\}$, $t_n\to\infty$. Indeed, for an arbitrary sequence $\{t_n\}$, we obtain that 
\begin{align*}
\bs\chi(t_n,\cdot)&\to 0\textrm{ in }\H,\\
\bs\chi_t(t_n,\cdot)&\to 0\textrm{ in }\ld,\\
\theta(t_n,\cdot)&\to\theta_{\infty}\textrm{ in }\ld,
\end{align*}
where $\theta_{\infty}$ is constant in time and space and is the same as in the formula in the thesis. 
\end{proof}

\subsection*{Acknowledgments}
P. M. Bies would like to thank Tomasz Cie\'slak for reading the initial draft of the manuscript and for helpful discussions about the paper.
\\

\noindent{\bf Data availability} Data sharing not applicable to this article as no datasets were generated or analysed during
the current study.\\
\\
\noindent{\bf Declarations}

\noindent{\bf Conflict of interest} The author declares that he has no conflict of interest.
\bibliographystyle{ACM}
\bibliography{boundary}	
\end{document}